\numberwithin{equation}{section}
\newtheorem{thm}{THEOREM}[section]
\newtheorem{lem}[thm]{Lemma}
\newtheorem{cor}[thm]{Corollary}
\newtheorem{prop}[thm]{PROPOSITION}
\newtheorem{quest}[thm]{PROBLEM}
\theoremstyle{definition}
\newtheorem{defn}[thm]{Definition}%[section]
\theoremstyle{remark}
\newtheorem{rem}[thm]{Remark}
\newtheorem{ex}[thm]{Example}
\newcommand{\tref}[1]{Theorem~\ref{#1}}
\newcommand{\cref}[1]{Corollary~\ref{#1}}
\newcommand{\pref}[1]{Proposition~\ref{#1}}
\newcommand{\lref}[1]{Lemma~\ref{#1}}
\newcommand{\exref}[1]{Example~\ref{#1}}
\newcommand{\R}{\mathbb{R}}
\def\BAD{\mathop{\rm BAD}\nolimits}%
\def\:{\colon}
\newcommand*{\set}[2]{\left\{\,\left.{#1}\vphantom{#2}\,\right.;\,{#2}\,\right\}}
\def\emptyset{\varnothing}
\newcommand*{\GHto}{\mathbin{\begin{picture}(16,3.5)
\put(-1,1){$\longrightarrow$}
\put(13,-3){\llap{\text{\sf\tiny GH}}}
\end{picture}}}
\def\parit#1{\medskip\noindent{\it #1}}
\def\qeds{\qed\par\medskip}
\begin{document}
%\tableofcontents
\pagebreak
%\bibliographystyle{alpha}

%\pagenumbering{roman}

\title{Metric-measure boundary and geodesic flow on Alexandrov spaces}
\thanks{\it 2010 Mathematics Subject classification.\rm\ Primary
53C20, 52A15, 53C23. Keywords: Alexandrov spaces, convex hypersurfaces, geodesic flow, Liouville measure}\

\author{Vitali Kapovitch, Alexander Lytchak and Anton Petrunin}
%\address{Mathematisches Institut\\ Universit\"at Bonn\\
%Wegelerstrasse 10, 53115 Bonn, Germany\\}
%\email{lytchak\@@math.uni-bonn.de}

%\subjclass{53C20, 52B99}
%\footnotetext[1]{}

%\keywords{Semi-convex functions, Alexandrov spaces, differentials}
%Spherical building, Euclidean building, non-positive curvature}

%\thanks{to Professor Ballmann}

%\date{\today}
%\date{August, 1997}

\begin{abstract}
We relate the existence of many infinite geodesics on Alexandrov spaces to a statement about the average growth of volumes of balls. We deduce that the geodesic flow exists and preserves the Liouville measure in several important cases. The developed analytic tool has close ties to integral geometry.
\end{abstract}

\maketitle
\renewcommand{\theequation}{\arabic{section}.\arabic{equation}}
\pagenumbering{arabic}

%\tableofcontents

\section{Introduction}
\subsection{Motivation and application}
The following question in the theory of Alexandrov spaces was formulated in a slightly different way in \cite{PP} and remains open.
\begin{itemize}
\item Are there ``many'' infinite geodesics on any Alexandrov space without boundary?
\end{itemize}

We address this question and obtain an affirmative answer in several cases.
 The main new tool is the investigation of  the Taylor expansion of the average volume growth.
The central results relate the first coefficient of this expansion to  the geodesic flow and show how to control
the Taylor expansion. This tool  might be interesting in its own right, beyond the realm of Alexandrov geometry.

In particular, we prove the existence of such infinite geodesics in the most classical examples of non-smooth Alexandrov spaces:

\begin{thm} \label{thmfirst}
Let $X$ be the boundary of a convex body in $\mathbb R^{n+1}$.
Then almost any direction in the tangent bundle $TX$
is the starting direction of a unique infinite geodesic on $X$.
Moreover, the geodesic flow is defined almost everywhere and preserves the Liouville measure.
\end{thm}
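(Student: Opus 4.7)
The plan is to reduce the theorem to a general statement about Alexandrov spaces whose ``metric-measure boundary'' vanishes, and then to verify that the boundary of a convex body satisfies this condition. From the introduction, I anticipate that the paper's central theorem takes the following form: if an Alexandrov space $X$ has vanishing metric-measure boundary (the first non-trivial coefficient in the Taylor expansion of the averaged growth of $\vol B_r(x)$), then for almost every $(x,v) \in TX$ the geodesic starting at $x$ in direction $v$ is well-defined and infinite, and the induced geodesic flow preserves the Liouville measure. Granting such a result, the task becomes: show that $X = \partial K$ has vanishing metric-measure boundary for every convex body $K \subset \R^{n+1}$.

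The smooth case is immediate. If $K$ is smooth and strictly convex, then $X$ is a closed Riemannian $n$-manifold without boundary, and the standard expansion $\vol B_r(x) = \omega_n r^n + O(r^{n+2})$ has no $r^{n+1}$ term at any point, so the metric-measure boundary vanishes identically. The theorem then follows directly from the general statement in this case.

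To handle the general case, I would pass to the limit. Approximate $K$ by smooth strictly convex bodies $K_i \to K$ in Hausdorff distance (for example, Minkowski-summing with shrinking balls and mollifying the support function). The boundaries $X_i = \partial K_i$ then converge to $X$ in Gromov--Hausdorff distance, and the induced $n$-dimensional Hausdorff measures converge as well. Since every $X_i$ is a non-negatively curved Alexandrov $n$-space, the convergence is non-collapsed, and one can appeal to stability of the metric-measure boundary under non-collapsed GH-with-measure convergence. This transfers the vanishing from the approximants $X_i$ to $X$, and the theorem follows.

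The main obstacle is making this stability argument go through. The singular set of a general convex hypersurface can be highly irregular (possibly dense), so any pointwise estimate on the Taylor coefficient near singularities is hopeless; continuity of the first-order coefficient under the limit must be established globally, using the uniform convexity of the approximants. If a direct stability theorem is not available, a fallback is to express intrinsic ball volumes on $X$ through Cauchy--Crofton-type integrals over affine subspaces of $\R^{n+1}$: the absence of any true ambient boundary for $X$ should force the $r^{n+1}$ coefficient to cancel structurally after averaging. Either way, once the general theorem is in hand, essentially all the remaining work is in ruling out the metric-measure boundary for an arbitrary convex hypersurface.
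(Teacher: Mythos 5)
Your first step --- reducing the theorem to the general statement that vanishing mm-boundary implies the Liouville property (Theorem \ref{thmmain}) --- is exactly the paper's strategy, and your observation that the smooth strictly convex case is immediate is fine. The gap is in the second half. The stability argument you propose (approximate $K$ by smooth convex bodies, pass the vanishing of the mm-boundary through a non-collapsed Gromov--Hausdorff limit) is precisely what the paper rules out: the mm-boundary and mm-curvature are \emph{not} stable under Gromov--Hausdorff convergence, even non-collapsed convergence of convex hypersurfaces. The paper states this explicitly (``there is no hope to deduce \tref{intsurface}, \tref{hypersurface} or \tref{thmfirst} by a direct limiting argument''), and Example \ref{ex:cone} illustrates the failure already for a two-dimensional cone, whose mm-curvature is a Dirac mass at the tip that is not recovered as a limit of the (identically vanishing) first-order terms of smoothings. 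The deviation measures $\mathcal{V}_r$ are defined through intrinsic ball volumes at a fixed scale $r$, and the order of the limits $r\to 0$ and $K_i\to K$ cannot be interchanged without quantitative control that uniform convexity alone does not supply. Your fallback via Cauchy--Crofton integrals is not developed enough to assess, but it is not the route taken.

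What the paper actually does (Theorem \ref{thmconv}) is a two-step localization. First, Theorem \ref{alexandrovthm} shows that any Alexandrov space has locally finite mm-boundary and that, when $\partial X=\emptyset$, every partial limit $\nu$ of $\mathcal{V}_{r_j}/r_j$ is a (non-negative) Radon measure vanishing on sets of finite $\mathcal H^{n-1}$-measure; since the non-smooth points of a convex hypersurface form a countable union of such sets, $\nu$ is concentrated on the set $Y$ of smooth points. Second, on compact subsets of $Y$ one compares $\mathcal{V}_r$ with the \emph{mean curvature measure} $\mathcal K$: Lemma \ref{lem:mean} gives the quantitative bound $|1-b_r(x)/(\omega_n r^n)|\le C\cdot\delta\cdot\mathcal K(B(x,6r))\cdot r^{1-n}$ whenever $\mathcal K(B(x,6r))<\delta r^{n-1}$, Lemma \ref{lem:compsm} guarantees this smallness near smooth points, and the Fubini-type exchange Lemma \ref{lem:exchange} converts the pointwise bound into $|\nu|(A)\le C\cdot\delta\cdot 6^n\cdot\mathcal K(B(A,7))$ with $\delta$ arbitrary. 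So the essential missing idea in your proposal is this comparison with an auxiliary curvature measure (here the mean curvature), which is what replaces the unavailable stability argument.
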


Apparently, the existence of a \emph{single} infinite geodesic has not been known, even in the two-dimensional case \cite{Zam-quest}.
Our result might appear somewhat
 surprising
since on \emph{most} convex surfaces \emph{most points in the sense of Baire categories} are not inner points of any geodesic;
see \cite{Zam-inv}.

\subsection{Metric-measure-boundary}
On a smooth manifold with boundary the geodesic flow is not defined for all times. The amount of geodesics terminating at the boundary in a given time depends on the the size of this boundary,  due to Santalo's integral  formula.

 We are going to capture the size of the boundary by estimating the average volumes of small balls and their deviations from the corresponding volumes in the Euclidean space.

Let $(X,d)$ be a locally compact separable metric space,
$\mu$ be a Radon measure on $X$ which takes finite values on the bounded subsets.
%and $n$ be a natural number.%???Maybe we should say that X is proper???
%(The definition will be applied in the case when $n$ is the Hausdorff dimension of $(X,d)$.)
{For $x\in X$ and $r>0$
denote by $B(x,r)$ the open metric ball of radius $r$ around $x$. Consider the  volume growth function}  $b_r\:X\to [0,\infty )$,
\begin{equation}
b_r(x):=\mu (B(x,r)).
\end{equation}
{ For a natural number $n>0$,  let $\omega _n$ be the volume of the $n$-dimensional unit Euclidean ball. The  deviation function
$$v_r (x)=1 - \frac {b_r (x)} { \omega _n{\cdot}r^n} $$
measures in a very rough sense the deviation of the metric measure space $(X,d,\mu)$ from $\R^n$.  Moreover, one can expect the behaviour of $v_r$ at the origin $r=0$ to reflect some curvature-like properties of the space $X$, as in the following fundamental example.

\begin{ex} \label{smoothscal}
Let $X^n$ be a smooth Riemannian manifold with Riemannian volume $\mu$. Then, $v_r (x)= \frac 1 {6\cdot(n+2)}\cdot scal \cdot r^2$ up to terms of higher order in $r$.  Here $scal$ denotes the scalar curvature of $X$.
%equivalently,
%\[\int_A b_r\cdot d\mu=\omega_n\cdot \mu A\cdot r^n+\frac{(scal\cdot \mu)A}{6 \cdot(n+2)}\cdot r^{n+2}\]
%for any measurable set $A\subset X$
\end{ex}

In this paper we are interested in the order of vanishing of $v_r$ at $r=0$  and the first non-vanishing coefficient;
in particular we assume that $v_r$ converges to zero in some integral sense.
%The question can only be meaningful under  some minimal regularity assumptions on the metric measure space $(X,d,\mu)$.
In most interesting metric spaces $(X,d)$, at least in the cases investigated here, the only reasonable choice of the measure $\mu$ for which $v_r$ is ``sufficiently small'' in $r$ is the $n$-dimensional Hausdorff measure $\mathcal H^n$:

\begin{ex}      \label{ex:rect}
Let $X$ be  a countably $n$-rectifiable metric space.
Assume that the Radon measure $\mu$  non-zero on open subsets of $X$.
If $\mu =\mathcal H^n$ then the functions $v_r$ converge $\mathcal H^n$-almost everywhere to $0$. Moreover, $\mu=\mathcal H^n$  is the only measure  with this property~\cite{AmbKirk}[Theorem 5.4].
 \end{ex}

 Therefore, in the sequel, the number $n$ will always be the Hausdorff dimension of $X$ and $\mu$ will be the $n$-dimensional Hausdorff measure.

As seen in Example \ref{ex:rect}, most points in reasonably nice spaces are rather regular.
It is  conceivable, that by averaging  the deviation functions $v_r$ we will smooth out the ``wildest singularities''.
The obtained objects will experience  better behaviour at $r=0$ and tell us more about the  regularity of the space.

%\begin{ex} If the functions $v_r$  for $0<r\leq 1$ are locally  uniformly bounded on $X$ then the measure $\mu$ is absolutely continuous with respect %to the $n$-dimensional Hausdorff measure $\mathcal H^n$.  If the space $X$ is $n$-dimensional rectifiable  then for $\mu =\mathcal H^n$ the deviation
%functions $v_r$ converge almost everywhere to $0$ for $r\to 0$.
% always normalized so that it coincides with the Lebesgue measure on $\R^n$.}
%\end{ex}

Thus, instead of looking on the point-wise behaviour of $v_r$ at $r=0$ we
 define the \emph{deviation measure} $\mathcal{V}_r$ of $X$ as a signed Radon measure \begin{equation} \label{eq:first}
\mathcal{V}_r = v_r \cdot \mu \, ,
\end{equation}
absolutely continuous with respect to $\mu$.}
%Here $\omega _n$ is the volume of the $n$-dimensional unit Euclidean ball.

The vector space  $\mathrm M(X)$ of \emph{signed Radon measures on $X$ is   dual  to the topological  vector} space of compactly supported continuous functions $C_c (X)$.
We consider the space $\mathrm M(X)$ with the topology of weak convergence.
Recall that a subset $\mathcal F \subset \mathrm M(X)$ is relatively compact if and only if
it is \emph{uniformly bounded};
that is, if for any compact subset $K\subset X$ the values $\nu (K), \nu \in \mathcal F$ are uniformly bounded.

%In all examples and applications below, $\mu$ is the $n$-dimensional Hausdorff measure $\mathcal H^n =\mathcal H^n _X$ on $X$, always normalized so that it coincides with the Lebesgue measure on $\R^n$.

%The map $r\mapsto \mathcal{V}_r \in \mathrm M(X)$ measures the deviation of $X$ from $\R^n$.
%The Taylor coefficients of $\mathcal{V}_r$ at $0$ (if they are defined) can be interpreted as integral curvatures, in analogy with the following exam

The next example, fundamental for this paper,  can be obtained by computations in local coordinates.
% and together with Example \ref{smoothscal} is the
%motivating example for this paper.
Since it is formally not needed in the sequel, we omit
the details, however  a rigorous  proof can be extracted from the proof of \tref{alexandrovthm} in Section \ref{sec:Alex} below.

\begin{ex} \label{mainex}
Let $X$ be a smooth $n$-dimensional Riemannian manifold with boundary $\partial X$.
Then, for $r\to 0$, the measures $\mathcal{V}_r/r$ converge in $\mathrm M(X)$ to
$c_n \cdot \mathcal H^{n-1} _{\partial X}$, for some constant $c_n >0$ depending only on $n$.
\end{ex}

This example suggests to view the first Taylor coefficient of $\mathcal{V}_r$ as the ``boundary'' of the metric-measure space $(X, d, \mu)$.
It motivates the following definition.

\begin{defn} \label{def:first}
Let $(X,d,\mu)$ be a metric measure space as above. Let $\mathcal{V}_r$ be the deviation measure of $X$, as in \eqref{eq:first}.
%and let the Radon measure $|\mathcal{V}_r|$ be the total variation of $\mathcal{V}_r$.
We say that $X$ has locally finite metric-measure boundary, abbreviated as mm-boundary,
if the family of signed Radon measures
\[\set{\mathcal{V}_r/r }{0<r\leq 1}\]
is uniformly bounded.
% we say that $X$ has locally finite mm-boundary ({\color{red} or metric-measure boundary}).
If $\lim _{r\to 0} \mathcal{V}_r /r =\nu $ in $\mathrm M(X)$, we call $\nu$ the mm-boundary of $X$. If $\nu =0$ we say that $X$ has vanishing mm-boundary.
\end{defn}

We refer to Subsection \ref{subsec:example} and Section \ref{sec:final} for a discussion of examples and questions, and
state now our central result connecting mm-boundaries to the existence of infinite geodesics in Alexandrov spaces:

\begin{thm} \label{thmmain}
Let $X$ be an Alexandrov space.
If $X$ has vanishing mm-boundary, then almost each direction of the tangent
bundle $TX$ is the starting direction of an infinite geodesic.
Moreover, the geodesic flow preserves the Liouville measure on $TX$.
\end{thm}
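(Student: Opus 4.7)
The plan is to connect the mm-boundary to the Liouville measure of short-lived geodesics via an integral-geometric estimate, then bootstrap using flow invariance. Let $\pi\co T^1X\to X$ be the projection of the unit tangent bundle, restricted to the full-measure regular part of $X$, and write $L$ for the Liouville measure on $T^1X$. For $r>0$ set $A_r\subset T^1X$ to be the set of unit vectors whose maximal geodesic has length at most $r$. The theorem reduces to proving $L\big(\bigcup_{T>0}A_T\big)=0$; the invariance of $L$ on the open set of eternally extendable regular vectors is then classical from the Riemannian structure on the regular part.

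The first main step is a local estimate of the form
\[ L\big(A_r\cap\pi^{-1}(K)\big)\ \leq\ c_n\cdot\mathcal V_{2r}(K)\ +\ C_n\cdot r^{2}\mu(K), \]
for each compact $K\subset X$ and small $r$. To derive it, I would expand $b_r(x)$ in polar coordinates at a regular point $x$, $b_r(x)=\int_{\Sigma_x}\int_0^r J_x(t,\xi)\,\mathbb 1_{\tau_x(\xi)>t}\,t^{n-1}\,dt\,d\xi$, where $\tau_x(\xi)$ is the lifespan of $\gamma_{x,\xi}$ and $J_x$ is the radial Jacobian. Bishop--Gromov comparison for Alexandrov spaces gives $J_x\leq 1+O(t^2)$, so the deficit $\omega_n r^n-b_r(x)$ decomposes into an $O(r^{n+2})$ curvature contribution and a principal term $\int_0^r t^{n-1}\big|\{\xi\in\Sigma_x:\tau_x(\xi)\leq t\}\big|\,dt$. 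Monotonicity in $t$ bounds the principal term from below by $c_n r^n\cdot\big|\{\xi:\tau_x(\xi)\leq r/2\}\big|$; integrating over $x\in K$ and applying Fubini on $T^1X$ gives the claim after renaming $r\leftrightarrow 2r$.

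The second step bootstraps to arbitrary finite time. The geodesic flow $\phi_s$ preserves $L$ on the open set where it is defined, by the standard Riemannian argument applied on the regular part. Partitioning $[0,T]$ into $\lceil T/r\rceil$ subintervals of length $r$ and using $\phi_{-kr}$ to carry each ``death window'' $\{(x,v):\tau(x,v)\in[kr,(k+1)r]\}$ injectively into $A_r$, Liouville invariance yields
\[ L\big(A_T\cap\pi^{-1}(K)\big)\ \leq\ \big(\lceil T/r\rceil+1\big)\cdot L\big(A_r\cap\pi^{-1}(K')\big) \]
for a compact enlargement $K'$ depending on $T$. The hypothesis $\mathcal V_r/r\to 0$ in $\mathrm M(X)$, with uniform boundedness, gives $\mathcal V_{2r}(K')=o(r)$; so by Step 1 the right-hand side is $o(1)$ as $r\to 0$, while the left is independent of $r$. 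Hence $L\big(A_T\cap\pi^{-1}(K)\big)=0$; exhausting $T$ and $K$ proves that almost every direction generates an infinite geodesic, and $\phi_t$-invariance of $L$ extends to all of $TX$.

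\emph{Main obstacle.} The local estimate of Step 1 is the technical heart. In Alexandrov spaces, $\exp_x$ can be branching or non-injective, the space of directions $\Sigma_x$ may carry angular defect at singular points, and $v_r$ is not a priori signed. These difficulties are handled by: using that the singular set has zero $\mathcal H^n$-measure so it contributes nothing to integrated estimates; exploiting that non-injectivity of $\exp_x$ only decreases $b_r(x)$, which is the favorable direction for the inequality; and absorbing the negative part of $v_r$ into a uniform $O(r^2)$ correction via the Bishop inequality available for Alexandrov spaces with curvature bounded below.
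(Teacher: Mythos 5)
Your Step 1 is sound in spirit (modulo replacing the ``radial Jacobian'' by the $(1+Cr^2)$-Lipschitz property of $\exp_x$ on $D_x$: the deficit $\omega_n r^n-b_r(x)$ controls $\mathcal H^n(B(0_x,r)\setminus D_x)$ up to $O(r^{n+2})$ at regular points, hence the angular measure of directions with short lifespan). It is close to the paper's identity $\mathcal{V}_r(K)=\frac{1}{\omega_n r^n}\bigl(\mathcal M(T^rK)-\mathcal H^{2n}(U^r(K))\bigr)$. But by itself it only shows that the set of directions admitting \emph{no} geodesic at all has measure zero; passing to a fixed time $T$ genuinely requires your Step 2.

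The gap is in Step 2, and it is the heart of the theorem: you assert twice that $\phi_s$ preserves $L$ wherever defined ``by the standard Riemannian argument applied on the regular part.'' There is no such argument available. The Riemannian tensor on $X_{\mathrm{reg}}$ provided by Otsu--Shioya/Perelman is merely continuous (of class BV in DC charts), far below the $C^{1,1}$ regularity needed to define a Hamiltonian/geodesic vector field and invoke the classical Liouville theorem. If invariance of $L$ on the regular part were automatic, the measure-preservation half of the theorem would need no hypothesis at all, whereas the whole point of the paper is that this invariance is \emph{equivalent, infinitesimally, to the vanishing of the mm-boundary}. Concretely, the paper proves it via: (i) a rescaling lemma showing that failure of the Liouville property produces sets $A_m\subset T^{r_m}K$ with $\mathcal M(A_m)-\mathcal M(\phi_1(A_m))\ge\varepsilon\, r_m^{n+1}$ for $r_m\to0$; and (ii) the total exponential map $E(v)=(\pi(v),\pi(\exp v))$, which satisfies the symmetry $E(-\phi_1(v))=J(E(v))$ with $J$ the coordinate swap on $X\times X$. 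Since $I(v)=-v$ preserves $\mathcal M$ and $J$ preserves $\mathcal H^{2n}$, and since vanishing mm-boundary forces $E$ to distort measure only by $o(r^{n+1})$ on $T^rY$, one deduces $|\mathcal M(S)-\mathcal M(\phi_1(S))|=o(r^{n+1})$, contradicting (i). Your bootstrap $L(A_T)\le(\lceil T/r\rceil+1)\,L(A_r\cap\pi^{-1}(K'))$ uses exactly the invariance you have not established, so as written both conclusions of the theorem (existence for a.e.\ direction, and measure preservation) remain unproved. To repair the argument you would need to supply something equivalent to the $E$-map symmetry step.
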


In different settings, geodesic flow on singular spaces have been investigated in \cite{BallmannBrin} and \cite{Bamler}.

\subsection{Size of the mm-boundary in Alexandrov spaces}
The next theorem shows that, similarly to \exref{mainex}, the topological boundary is closely related to the mm-boundary in Alexandrov spaces.

\begin{thm} \label{alexandrovthm}
Let $X^n$ be an $n$-dimensional Alexandrov space.
Then $X$ has locally finite mm-boundary.
If $ \nu= \lim \frac{\mathcal{V}_{s_j}}{s_j}$, for a sequence $s_j \to 0 $, then $\nu$ is a Radon measure and
the following holds true.
\begin{enumerate}
\item\label{full-measure-zero-nu}There is a Borel set $A_0$ with $\mathcal H^n (X\setminus A_0) = \nu (A_0)=0$.
\item\label{bry-nu} If the topological boundary $\partial X$ is non-empty then $\nu \geq c \cdot \mathcal H^{n-1} _{\partial X}$,
for a positive constant $c$ depending only on $n$.
\item \label{n-1-nu} If the topological boundary $\partial X$ is empty then $\nu (A)=0$, for any Borel subset $A\subset X$ with $\mathcal H^{n-1} (A)<\infty$.
\end{enumerate}
\end{thm}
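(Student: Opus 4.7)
The plan is to analyse the pointwise behaviour of $v_r(x)$ according to the local geometry at $x$ (regular point, boundary point, deeper singular point) and then to pass from these pointwise estimates to the measure-theoretic conclusions. Throughout, the main technical tools are Bishop--Gromov volume comparison in Alexandrov spaces and pointed Gromov--Hausdorff--measure convergence of the rescaled balls $(X,x,d/r,\mu/r^n)$ to the tangent cone $T_x X$.

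For the local finiteness of $\{\mathcal{V}_r/r\}_{0<r\le 1}$, I would fix a compact $K\subset X$ and a lower curvature bound $\kappa$ on a neighbourhood of $K$. Bishop--Gromov gives $b_r(x)\le\vol_\kappa(r)=\omega_n r^n(1+O(r^2))$ uniformly on $K$, hence $v_r\ge -C r^2$ there; this controls the negative part of $\mathcal{V}_r/r$ and, in passing, forces any limit $\nu$ to be a non-negative measure. For the positive part, split $K$ into the points at distance $\ge r$ from the singular set $S\cup\partial X$ and the complementary $r$-tube. On the far piece an averaged monotonicity argument gives $v_r=O(r)$, while on the tube the trivial bound $v_r\le 1$ suffices, because $\mathcal H^{n-1}(S\cup\partial X)$ is locally finite (Burago--Gromov--Perelman, Otsu--Shioya), so the tube has $\mu$-mass $O(r)$.

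Parts \eqref{full-measure-zero-nu} and \eqref{bry-nu} follow from the pointwise behaviour of $v_r$ at typical points and at boundary points respectively. At $\mathcal H^n$-a.e.\ $x$, Otsu--Shioya DC-regularity yields a Taylor expansion $b_r(x)=\omega_n r^n+o(r^{n+1})$, so $v_r(x)/r\to 0$; combined with the uniform bound of the previous step and a standard approximation argument this shows $\nu$ is concentrated on an $\mathcal H^n$-null set, proving \eqref{full-measure-zero-nu}. At a boundary point $x\in\partial X$ the tangent cone $T_x X$ is an Alexandrov cone with non-empty boundary, whose unit ball has volume at most $(1-c_n)\omega_n$ for a universal $c_n>0$; hence $\liminf_{r\to 0}v_r(x)\ge c_n$. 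Applying the coarea formula to $d(\cdot,\partial X)$ together with joint continuity of the blow-up along $\partial X$ gives $\nu\ge c\cdot\mathcal H^{n-1}_{\partial X}$, which is \eqref{bry-nu}.

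Part \eqref{n-1-nu} is the most delicate and I expect it to be the main obstacle. The assertion $\nu(A)=0$ whenever $\mathcal H^{n-1}(A)<\infty$ reduces, via the uniform bound and approximation, to showing $v_r(x)/r\to 0$ at $\mathcal H^{n-1}$-a.e.\ $x$. The key structural input is the dichotomy that, in the absence of topological boundary, at $\mathcal H^{n-1}$-a.e.\ $x$ the tangent cone $T_x X$ admits no codimension-$1$ flat half-space direction: such a direction would force $x$ to sit on a codimension-$1$ stratum producing a boundary point of $X$ by Perelman-type stability. Combined with a quantitative volume convergence statement, this should give the required pointwise vanishing. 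The genuine difficulty is promoting this pointwise dichotomy to a uniform integral estimate on $v_r/r$, which in the boundaryless case requires information on the singular set transverse to codimension-$2$ strata beyond the classical Alexandrov rectifiability results.
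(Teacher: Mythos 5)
Your overall architecture (Bishop--Gromov for the lower bound, stratification of $X$ by singularity type, blow-up at boundary points for part (2)) matches the paper's, and your sketch of part (2) is essentially the argument used there. However, there are two genuine gaps at exactly the places where the paper has to work hardest. First, your upper bound on the positive part of $\mathcal{V}_r/r$ rests on the claim that the $r$-tube around the singular set has $\mu$-mass $O(r)$ because $\mathcal H^{n-1}(S\cup\partial X)$ is locally finite. Finiteness (or even vanishing) of Hausdorff measure does not control tube volumes: a set of Hausdorff dimension $n-2$ can have $r$-neighbourhoods of volume far larger than $O(r)$, and no Minkowski-content bound for the singular stratum of an Alexandrov space is available off the shelf. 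The paper replaces this with a multi-scale covering statement (Proposition 7.4): for every $\alpha>n-2$ one can cover a ball of radius $s$, up to an $\mathcal H^n$-null set, by countably many ``good'' (strained) balls $B(x_m,r_m)$ with $\sum_m r_m^{\alpha}\lesssim s^{\alpha}$, proved by a compactness/contradiction argument iterated over all scales. Something of this strength is indispensable.

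Second, on the ``far piece'' you assert $v_r=O(r)$ by ``an averaged monotonicity argument,'' but no pointwise first-order bound of this kind is known at strained points; monotonicity (Bishop--Gromov) only gives the one-sided bound $v_r\ge -Cr^2$. The paper's upper bound is genuinely integral: in Otsu--Shioya/Perelman DC coordinates the metric tensor $g$ is of bounded variation, and the key estimate (Proposition 6.1 and its corollaries) is $\mathcal{V}_r(A)\le C\,r\int_{B(A,2r)}|g'|$, combined with the scaling bound $[g'](B(x,2r))\le A\,r^{n-1}$ on a strained $r$-ball. This BV machinery is what your proposal is missing, and it is also what drives parts (1) and (3): the limit $\nu$ is shown to be dominated by the ``minimal metric derivative measure'' $\mathcal N=[Dg]$, which vanishes on sets of finite $\mathcal H^{n-1}$-measure (Goffman--Liu) and is singular with respect to $\mathcal H^n$ (approximate differentiability of BV functions). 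By contrast, your route through pointwise statements ($v_r(x)/r\to 0$ at $\mathcal H^n$-a.e., respectively $\mathcal H^{n-1}$-a.e., point) does not transfer to the weak limit $\nu$: for a closed set $F$ the portmanteau inequality goes the wrong way ($\limsup_j\mathcal{V}_{s_j}(F)/s_j\le\nu(F)$), so a.e.\ pointwise vanishing on $F$ does not force $\nu(F)=0$; mass can concentrate onto $F$ from nearby points. One needs either density estimates for $\nu$ at every point (which the paper extracts from the covering bound with $\alpha=n-\tfrac32$) or domination of $\nu$ by a measure evaluated on neighbourhoods, as in the paper.
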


We believe that an Alexandrov space with empty topological boundary $\partial X$ has vanishing mm-boundary,
which would solve the question about the existence of infinite geodesics.
This conjecture will be proved in two cases.

\begin{thm} \label{hypersurface}
Let $X^n$ be a convex hypersurface in $\R^{n+1}$ or let $X$ be a two-dimensional Alexandrov space without boundary.
Then $X$ has vanishing  mm-boundary.
\end{thm}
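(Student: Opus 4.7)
The plan is to prove, in both cases, that every subsequential weak limit $\nu$ of $\{\mathcal V_r/r\}$ as $r\to 0$ is the zero measure. By \tref{alexandrovthm}, such a $\nu$ exists, is a nonnegative Radon measure, and vanishes on every Borel set of finite $\mathcal H^{n-1}$-measure since $\partial X=\varnothing$. What remains is to rule out any diffuse part of $\nu$.

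\textbf{Two-dimensional case.} In a $2$-dimensional Alexandrov space without boundary, the conical singular locus $\Sigma$ (points of total angle $<2\pi$) is at most countable, hence $\mathcal H^1$-null, so $\nu(\Sigma)=0$. Off $\Sigma$, one has the classical expansion of the disk area in terms of the signed Radon curvature measure $\omega$ of~$X$,
\[
\pi r^2 - b_r(p) \;=\; \int_0^r\!\int_0^s \omega\bigl(\bar B(p,t)\bigr)\,dt\,ds.
\]
For $f\in C_c(X)$, Fubini rewrites $\int f\,v_r\,d\mu$ as
\[
\frac{1}{\pi r^2}\int_0^r\!\int_0^s \int_X\!\Bigl(\int_{\bar B(q,t)} f\,d\mu\Bigr)\,d\omega(q)\,dt\,ds.
\]
The Bishop--Gromov-type bound $\mu(\bar B(q,t))=O(t^2)$, combined with the local finite total variation of $\omega$, gives $\int f\,v_r\,d\mu = O(r^2)$; dividing by $r$ shows $\mathcal V_r/r\to 0$, hence $\nu=0$.

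\textbf{Convex hypersurface case.} For $X=\partial K\subset\R^{n+1}$, the intrinsically singular locus $X_{\mathrm{sing}}$ — the points at which the tangent cone is not isometric to $\R^n$ — has Hausdorff dimension at most $n-2$ by a standard convex-geometric argument, and is therefore $\mathcal H^{n-1}$-null. Invoking \tref{alexandrovthm} again, $\nu$ is concentrated on $X_{\mathrm{reg}}:=X\setminus X_{\mathrm{sing}}$. On $X_{\mathrm{reg}}$ the tangent cone is $\R^n$, so $v_r(x)\to 0$ pointwise; at $\mathcal H^n$-a.e.~$x\in X_{\mathrm{reg}}$, Alexandrov's twice-differentiability theorem for convex functions produces a second fundamental form, yielding the quantitative pointwise estimate $v_r(x)=O(r^2)$, while the Bishop--Gromov bound $0\le v_r\le 1$ from non-negative intrinsic curvature supplies a global ceiling. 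To promote this to $\int f\,v_r\,d\mu = o(r)$ for every $f\in C_c(X)$, my plan is to obtain locally uniform quantitative control by slicing $X$ with affine $2$-planes, applying the two-dimensional result on almost every planar section, and then integrating over the space of such slices via a Cauchy--Crofton-type formula.

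\textbf{Main obstacle.} The sticking point is precisely this uniform integrability in the hypersurface case. The pointwise estimate $v_r(x)=O(r^2)$ on $X_{\mathrm{reg}}$ carries an $x$-dependent constant governed by the local second fundamental form, which is a priori unbounded on compact pieces of $X_{\mathrm{reg}}$. Exploiting the global convexity of $K$ — through either tube (Steiner) expansions of $K$ or the integral-geometric averaging just sketched — to extract an integrable local majorant for $v_r/r$ is, I expect, the hardest technical step of the argument.
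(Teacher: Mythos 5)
Both halves of your proposal hinge on estimates that are not established, and in each case the missing estimate is exactly the hard point of the paper's proof. In the two-dimensional case, the identity $\pi r^2-b_r(p)=\int_0^r\int_0^s\omega(\bar B(p,t))\,dt\,ds$ is false in general: it is the Fiala/Gauss--Bonnet relation $L'(t)=2\pi-\omega(\bar B(p,t))$ for the length $L(t)$ of the distance circle, integrated twice, and that relation holds only up to the cut locus of $p$. Beyond the cut locus $L$ grows strictly slower (a flat torus has $\omega=0$ yet $b_r<\pi r^2$ past the injectivity radius; a point $p$ near a cone point acquires cut points in $B(p,r)$ as soon as $r$ exceeds the distance to the apex, so the failure is local and unavoidable near singularities). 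In general one only gets $\pi r^2-b_r(p)\geq\int_0^r\int_0^s\omega(\bar B(p,t))\,dt\,ds$, which bounds $v_r$ from \emph{below} --- the direction already supplied by Bishop--Gromov. The direction you need, an upper bound on $\pi r^2-b_r$, requires controlling the area lost to the cut locus; this is precisely what the paper extracts in \lref{lem:bl} from the Bonk--Lang bi-Lipschitz parametrization theorem, in the form of the two-sided bound $|v_r(x)|\leq 3\cdot|\Omega|(B(x,r))$ under a smallness assumption on the total curvature. Your subsequent Fubini step is the paper's \lref{lem:exchange}, so the architecture is right, but the key pointwise input is missing.

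In the hypersurface case, the claim that the intrinsically singular set has Hausdorff dimension at most $n-2$ is false: for the boundary of a polytope the non-smooth points form the whole $(n-1)$-skeleton, of dimension $n-1$. What is true, and what the paper uses, is Zaj\'{\i}\v{c}ek's theorem that the non-smooth points form a countable union of sets of finite $\mathcal H^{n-1}$-measure, which combined with \tref{alexandrovthm}(3) still yields $\nu(X\setminus Y)=0$; so this step is salvageable but incorrectly justified. More seriously, the step you yourself identify as the main obstacle --- an integrable majorant for $v_r/r$ on the smooth part --- is left unproved, and the Crofton-type slicing you sketch is not a plausible route: sections of an $n$-dimensional hypersurface by affine $2$-planes are curves, not surfaces to which the two-dimensional case could apply, and intrinsic ball volumes do not localize on such slices. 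The paper's resolution is \lref{lem:mean}: a quantitative bound $|v_r(x)|\leq C\cdot\delta\cdot\mathcal K(B(x,6\cdot r))\cdot r^{1-n}$ in terms of the mean curvature measure $\mathcal K$, proved by writing $X$ locally as a convex graph and bounding the gradient of the graphing function by the integral of its Hessian. Integrating this bound via \lref{lem:exchange} and Bishop--Gromov makes $\mathcal K$ exactly the majorant you are looking for, and letting $\delta\to 0$ (possible near smooth points by \lref{lem:compsm}) forces the limit measure to vanish. As it stands, then, both cases of the proposal have genuine gaps at the decisive estimates.
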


In combination with \tref{thmmain} this proves \tref{thmfirst}.
The two-dimensional case  could be derived  from the statement about convex hypersurfaces and Alexandrov's embedding theorems.
Another proof follows from a much stronger result discussed in the next subsection.

\subsection{Metric-measure-curvature} \label{subsec:curv}
Motivated by Example \ref{smoothscal} one can naively hope that the second Taylor coefficient at $0$ of the
map $\mathcal{V}_r\colon r\mapsto \mathrm M(X)$ describes the scalar curvature of the space.
%This again has natural analogy with the classical integral geometry.

\begin{defn}
Let $X,\mathcal{V}_r$ be as in Definition \ref{def:first}.
If the family $\mathcal{V}_r /r^2, r\leq 1$ is uniformly bounded then we say that $X$ has locally finite mm-curvature.
% For any compact subspace $K$ of $X$, we call $\limsup _{r\to 0} \|\mathcal{V}_r /r\| _K$ the bound of the scalar curvature in $K$.
If the measures $\mathcal{V}_r /r^2$ converge to a measure $\nu$, we call $\nu$ the mm-curvature of $X$.
\end{defn}

%If, in the definition above, the measures $\mathcal{V}_r /r^2$ locally converge to a measure $\nu$, we call $\nu$ the mm-curvature of $X$.
Clearly, local finiteness of mm-curvature as defined above implies that the mm-boundary vanishes.
Thus, the following result proves \tref{hypersurface} in the $2$-dimensional case.

\begin{thm} \label{intsurface}
Let $X$ be a 2-dimensional Alexandrov space without boundary.
Then $X$ has locally finite mm-curvature.
\end{thm}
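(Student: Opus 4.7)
The plan is to relate the deviation measure $\mathcal V_r$ on a 2-dimensional Alexandrov surface $X$ to its curvature measure $\omega$, exploiting the Alexandrov-Reshetnyak theory which makes $X$ into a surface of bounded integral curvature: $\omega$ is a signed Radon measure with $\omega \geq \kappa \cdot \mathcal H^2$ whenever $X$ has curvature bounded below by $\kappa$. First I would establish a Bertrand-Diguet-Puiseux-type identity. For $x \in X$ and $s$ small enough that $\overline{B(x,s)}$ is a topological disk, the first variation of perimeter together with the Gauss-Bonnet formula $\int_{\partial B(x,s)} \kappa_g\, d\sigma = 2\pi - \omega(B(x,s))$ gives $\mathrm{length}(\partial B(x,s)) = 2\pi s - s\,\omega(\{x\}) - \int_0^s \omega(B(x,t)\setminus\{x\})\, dt$, and one more integration in $s$ yields
\[
\pi r^2 v_r(x) = \frac{r^2}{2}\,\omega(\{x\}) + \int_0^r (r-t)\,\omega\bigl(B(x,t)\setminus\{x\}\bigr)\, dt.
\]

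Next I would fix a compact set $K \subset X$ and integrate this representation against $\mu$. Taking absolute values on the right, the atomic contribution $\int_K |\omega|(\{x\})\, d\mu(x)$ vanishes since $\omega$ has only countably many atoms, a $\mu$-null set. Swapping the order of integration in the remaining double integral,
\[
\pi r^2 \int_K |v_r|\, d\mu \leq \int_0^r (r-t) \int_X \mu\bigl(B(y,t)\cap K\bigr)\, d|\omega|(y)\, dt.
\]
By Bishop-Gromov, $\mu(B(y,t)) \leq C\,t^2$ with $C$ depending only on $\kappa$ and $\diam(K)$, and the $y$-integral is effectively supported in the $r$-neighbourhood $K_r$, so $\int_K |v_r|\, d\mu \leq C'\,r^2 \cdot |\omega|(K_r)$. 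As $|\omega|$ is a Radon measure, $|\omega|(K_r)$ stays bounded as $r \to 0$, yielding uniform boundedness of $|\mathcal V_r|/r^2$ on compacta, which is exactly local finiteness of mm-curvature.

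The main obstacle will be rigorously establishing the Gauss-Bonnet identity on a general Alexandrov surface, where metric balls at cusp-like points need not be topological disks, boundary curves need not be classically smooth, and the very assignment of geodesic curvature and curvature measure requires the full machinery of Reshetnyak-Alexandrov. My plan is to circumvent this by polyhedral approximation: by Alexandrov-Zalgaller, $X$ is a Gromov-Hausdorff limit of polyhedral surfaces $X_j$ on which $\omega_j$ is concentrated at finitely many cone points and the identity holds by direct flat-sector computation. An explicit calculation at a cone point of angle $\theta$ gives $\int_{B(p,r)} v_r\, d\mu = \tfrac{\theta(2\pi-\theta)}{24\pi}\,r^2$ for $r$ smaller than the distance to other cone points; summing and using weak convergence $\omega_j \to \omega$, together with uniformly controlled total variation on compact sets, transfers the $O(r^2)$ estimate from the $X_j$ back to $X$.
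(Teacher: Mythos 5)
Your overall architecture --- a pointwise bound of $|v_r(x)|$ by the curvature of a slightly larger ball, followed by integration over $K$, a Fubini swap (this is exactly \lref{lem:exchange} of the paper), and Bishop--Gromov to bound ball volumes --- is precisely the skeleton of the paper's proof of \tref{intsurface1}. The gap is in the pointwise input. The Bertrand--Diguet--Puiseux identity
$\pi r^2 v_r(x) = \tfrac{r^2}{2}\,\omega(\{x\}) + \int_0^r (r-t)\,\omega(B(x,t)\setminus\{x\})\, dt$
is false in general, already on polyhedral surfaces. Take the flat cone of total angle $\theta=\pi/3$ with tip $p$, and a point $x$ with $d(x,p)=d$. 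Developing the cone, the cut locus of $x$ comes within distance $d\sin(\theta/2)=d/2$ of $x$, so for $d/2<r<d$ the ball $B(x,r)$ is an annulus wrapping around (but not containing) the tip, and $b_r(x)<\pi r^2$ strictly; yet $\omega(B(x,t))=0$ for all $t\le r$, so your right-hand side vanishes. The derivation breaks exactly where you flagged it might: once $B(x,s)$ meets the cut locus or fails to be a disk, the first variation of perimeter is no longer $\int\kappa_g$ and Gauss--Bonnet no longer applies to $B(x,s)$. Crucially, this is not a removable technicality: the points where it fails are those within distance comparable to $r$ of a curvature concentration, and these are exactly the points whose contribution to $\int_K|v_r|\,d\mu$ must be estimated (they cannot be discarded as a null set, and their contribution is of the critical order $r^2$). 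Your polyhedral-approximation fallback does not repair this, since the identity already fails on the polyhedral approximants; indeed your ``explicit cone computation'' $\tfrac{\theta(2\pi-\theta)}{24\pi}r^2$ is what the false identity predicts, not what a direct computation of the overlapping exponential image gives for all the relevant base points.

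The genuine difficulty is the lower bound on $b_r(x)$ when the ball is not a disk --- volume can be lost to the cut locus without any curvature inside the ball --- and no local Gauss--Bonnet argument sees this. The paper resolves it by a different mechanism: Lemma~\ref{lem:bl} reduces (after a surgery producing a complete polyhedral plane of small total curvature) to the Bonk--Lang bi-Lipschitz parametrization theorem, which yields $|v_r(x)|\le 3\,|\Omega|(B(x,r))$ whenever the total curvature near $x$ is below a threshold $\delta_0$; the finitely many points where $|\Omega|$ concentrates mass $\ge\delta_0$ are then handled separately by a crude area bound on a ball of radius $2r$ around them. If you want to salvage your approach, you must either import Bonk--Lang (or an equivalent quantitative statement controlling volume loss at the cut locus by nearby total curvature) or prove such a statement yourself --- and that is the actual content of the theorem, not a technical footnote.
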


This finiteness result holds true in the much greater generality of surfaces with bounded integral curvature in the sense of Alexandrov--Zallgaler--Reshetnyak \cite{Reshetnyak-GeomIV}, \cite{AZ}, see Section \ref{sec:surface}.

Note, however, that the mm-curvature
in \tref{intsurface} does not need to coincide with the ``curvature measure'' as defined in \cite{AZ}, even in the case of a cone; compare to Example \ref{ex:cone}.
In particular, this shows that the mm-curvatures in $2$-dimensional Alexandrov spaces are not stable under Gromov--Hausdorff convergence.

\begin{rem}
Nina Lebedeva and the third named author have found in \cite{LP} a ``scalar curvature measure'' on all smoothable Alexandrov spaces.
% which has the property of being stable
%under non-collapsed Hausdroff convergence.
There is a hope, supported by our proof of \tref{intsurface}, that a better understanding of this ``stable curvature measure'' will
lead to some control of the mm-boundary and mm-curvature discussed here.
\end{rem}
\subsection{Relation to the Lipschitz--Killing curvatures} Let $M$ be 
%compact convex body or 
a compact smooth submanifold in $\R^n$.
Given $r>0$, consider the volume $w(r)=\mathcal H^n (B(M,r))$ of the distance tube $B(M,r)$ around $M$.
The function $r\mapsto w(r)$ is a polynomial, at least for small positive $r$. The coefficients of $w(r)$,  called the  Lipschitz--Killing curvatures of $M$, are given as integrals of some intrinsically defined curvature terms.
Moreover, these coefficients can be localized and considered as measures on $M$.
We refer to \cite{Alesker} for a short account of the theory, connection of the theory with
\cite{LP} and further hypothetical relations with the theory of Alexandrov spaces.

To make the formal similarity with our approach to mm-boundary and mm-curvature more transparent, we observe that
(at least for a smooth $n$-dimensional manifold $M$) the number $\int _M \mathcal H^n (B(x,r))\, \cdot d\mathcal H^n (x)$ can be interpreted as the $\mathcal H^{2\cdot n}$-measures
of the distance tubes $B(\Delta,{\frac r {\sqrt 2} })$ around the diagonal $\Delta $ in the Cartesian product $M\times M$.

 \subsection{Idea of the proof of \tref{thmmain}}
The interpretation of the tangent bundle of $M$ as the normal bundle of the diagonal $\Delta$ in $M\times M$ gives a connection between
the measure theoretical properties of the tubes around $\Delta$ and the dynamical properties of the geodesic flow.

We clarify this abstract statement by explaining the main idea of our proof of \tref{thmmain} in the case of a complete smooth Riemannian manifold $X=M$.
In this case the existence of geodesics is trivial.
Thus, we just sketch a new proof of the classical fact that the geodesic flow $\phi$ preserves the Liouville measure $\mathcal M$ on $TM$.
This proof is sufficiently stable to be transferred to the singular situation,

Denote by $\pi\: TM\to M$ the tangent bundle of $M$.
Let $\phi_t\: TM\to TM$ be the geodesic flow for time $t$.
 Define $E\:TM\to M\times M$ by
\[E(v)=(\pi(v), \pi (\phi _1 (v)).\]
By construction, $E(-\phi_1(v))= J(E(v))$,
where $J$ is the involution of $M\times M$ which switches the coordinates.
Since $J$ preserves the measure $\mathcal H^{2\cdot n}$ on $M\times M$ and $v\to -v$ preserves the Liouville measure $\mathcal M$ on $TM$,
the statement that $\phi$ is measure preserving hinges upon the smallness of measure-distortion of the map $E \:(TM,\mathcal M) \to (M\times M, \mathcal H^{2\cdot n})$ close to the $0$-section.

In the present case of a Riemannian manifold,
this property of $\phi_1$ is expressed by the fact that the differential of $E$ is the identity (after suitable identifications).
Similarly,
in the general case of Alexandrov spaces, we observe that the ``infinitesimal'' deviation (via the canonical map $E$) between $J$ being measure preserving (which we know) and $\phi _1$ being measure preserving (which is what we want to show)
is expressed as the triviality of the mm-boundary.

\subsection{Stability and relation with quasi-geodesics}
Many Alexandrov spaces, for instance all convex hypersurfaces, appear naturally as Gromov--Hausdorff limits of smooth Riemannian manifolds. However, the properties of the geodesic flow, mm-boundaries and mm-curvature are unstable under limit operations; see also the discussion at the end of Subsection \ref{subsec:curv}.
Thus, there is no hope to deduce \tref{intsurface}, \tref{hypersurface} or \tref{thmfirst} by a direct limiting argument.

For instance, being a geodesic is a local notion, not preserved under limits.
However, any limit of geodesics in a non-collapsed limit of Alexandrov spaces is a curve sharing many properties with geodesics.
These properties are used to define the so called \emph{quasi-geodesics};
see \cite{PP}, \cite{Petsemi} and the references therein.
It was shown that any direction is the starting direction of an infinite quasi-geodesic.
One motivation for the present paper was an attempt to prove Liouville's theorem for the ``quasi-geodesic flow'', see Subsection \ref{subsec:quasi}.

\subsection{Examples} \label{subsec:example}
The estimates of the mm-boundary and mm-curvature are quite involved even in quite simple situations.
The following examples are not needed in the sequel and we omit the somewhat tedious computations. Examples \ref{ex:cone}, \ref{ex:secondlast} and \ref{ex:last} should be compared with
\cite{Bernig-CAT} and \cite{Bernig-Alex} revealing further natural connections to the theory of Lipschitz--Killing curvature on singular subsets of the Euclidean space.

%\begin{ex} Let $X$ be an $n$-dimensional \emph{Lipschitz manifold}; that is, an $n$-dimensional manifold equipped with intrinsic metric such that any point admits a neighborhood which is bi-Lipschitz to an open set in a Euclidean space.
%Let us equip $X$ with the measure $\mu=\mathcal H^n$.
%Then $\lim _{r\to 0} \mathcal{V}_r =0$. Moreover, $\mathcal H^n$ is the only measure with this property.
%\end{ex}

\begin{ex}
Let $X$ be a Riemannian manifold with a Lipschitz continuous metric.
Then $X$ has vanishing  mm-boundary.
\end{ex}

\begin{ex}
If $X$ is a manifold with two-sided bounded curvature in the sense of Alexandrov then its mm-curvature is well-defined and absolutely continuous with respect to the Hausdorff measure.
\end{ex}

\begin{ex} \label{ex:cone}

Let $X$ be the Euclidean  cone over the circle $S_{\rho}$ of length $\rho$.
The curvature measure and the mm-curvature are Dirac measures
concentrated   at the tip of the cone.  The mass of the curvature measure is $\alpha =2\pi-\rho$.
From example \ref{smoothscal} one would expect the mass of the mm-curvature to be $m(\alpha) = \frac {\alpha} {12}$.
However, a straightforward calculation shows that $m(\alpha)=\frac \alpha {12}+ f(\alpha)$, where $f(\alpha)= O(\alpha^2)$ is a non-zero function.
% $f(\alpha)= O(\alpha ^3)$.

\end{ex}

\begin{ex} \label{ex:secondlast}
Let $X$ be a finite $n$-dimensional simplicial complex with an intrinsic metric $d$.
Assume that the restriction of $d$ to each simplex is given by a smooth Riemannian metric.
Then $X$ has a finite mm-boundary $\nu$ with the support on the $(n-1)$-skeleton $X^{n-1}$.
\end{ex}

\begin{ex} \label{ex:last}
Assume that $X$ as in the last example is a pseudo-manifold.
Then $X$ has finite mm-curvature. If all simplices are flat then the mm-curvature is concentrated 
 on  the $(n-2)$-skeleton.
\end{ex}

\subsection{Structure of the paper}
After preliminaries collected in Section \ref{sec:prelim}, we prove \tref{thmmain} in Section \ref{sec:Liou} along the lines sketched above.
In Sections \ref{sec:surface}, \ref{sec:hyper} and \ref{sec:Alex} we prove the remaining theorems \ref{intsurface}, \ref{hypersurface} and \ref{alexandrovthm} respectively.
The proofs of these theorems all rely on a decomposition of the space into a regular and a singular part, with a quantitative estimate of the size of the singular part.
Finally, on the regular part we estimate the mm-curvature and mm-boundary  by comparing it to other natural measures on these spaces.

In the case of surfaces, this comparison measure is the classical curvature measure, in the case of convex hypersurfaces, this comparison measure is the mean curvature.
Finally, in the case of a general Alexandrov space, the comparison is given by the derivative of the metric tensor expressed in DC-coordinates, \cite{Per-DC}.

The needed control of the ball growth in terms of these measures is given by a theorem of Mario  Bonk and Urs Lang in the case of surfaces and follows from classical convex geometry in the case of hypersurfaces.
The analytical comparison result needed for Alexandrov spaces is established in Section \ref{sec-BV-estimate}.

In the final Section \ref{sec:final} we collect a number of comments and open questions which naturally arose during the work on this paper.

\subsection{Acknowledgments} The authors are grateful for helpful conversations and comments to Semyon Alesker,  Richard Bamler,
Andreas Bernig, Ivan Izmestiev, Aaron Naber, Koichi Nagano.

The first author was supported in part by a Discovery grant from NSERC and by a Simons Fellowship from the Simons foundation (award 390117).
The second author was supported in part by the DFG grants SFB TRR 191 and SPP 2026.
The third author was partially supported by NSF grant DMS 1309340.

\section{Preliminaries} \label{sec:prelim}
\subsection{Metric spaces}
We refer to \cite{BBI01} for basics on metric spaces.
The distance between points $x,y$ in a metric space $X$ will be denoted by $d(x,y)$.
% or $|xy|_X$ or just $|xy|$.
By $B(x,r)$ we will denote the open metric ball of radius $r$ around a point $x$. For $A\subset X$
we denote by $B (A,r)$ the open $r$-neighborhood $B (A,r) =\cup _{x\in A} B (x,r)$.

A \emph {minimizing geodesic} $\gamma$ in a metric space $X$ is a map $\gamma \: \mathbb I\to X$ defined on an interval $\mathbb I$ such that for some number $\lambda \geq 0$ and all $t,s\in \mathbb I$
$$ d(\gamma (t),\gamma (s)) =\lambda \cdot |t-s|.$$
In particular, we allow $\gamma$ to have any constant velocity $\lambda \geq 0$.
% On the other hand, geodesics
%as defined here will always be globally ``minimizing geodesics'' in the usual notations of Riemannian geometry.
A \emph{geodesic} is a curve $\gamma\: \mathbb I\to X$ such that its restriction to a small neighborhood of any point in $\mathbb I$ is a minimizing geodesic.
Note that a geodesic is a curve of constant velocity.

\subsection{Metric measure spaces}
We refer to \cite{Federer} and \cite{Evans} for basics on measure theory.

Let $X$ be a locally compact separable metric space.
A \emph{Radon measure} on $X$ is a measure on $X$ for which all compact subsets are measurable and have finite measure. 
 Any Radon measure defines an element of  $\mathrm M(X)$ the dual space to the topological vector space $\mathcal C_c (X)$ of compactly supported continuous functions on $X$.
All elements in $\mathrm M(X)$ are called \emph{signed Radon measures}.
 Any  $\mu \in M(X)$ can be uniquely written as $\mu_+-\mu_-$, where $\mu_\pm$ are a Radon measures  concentrated on disjoint subsets.
  The  measure $|\mu|=\mu_+ +\mu_-$ is called \emph{the total variation} of $\mu$.

%Any  $\mu \in M(X)$ can be uniquely written as $f\cdot |\mu|$, where $|\mu|$ is a Radon measure, called \emph{the
%total variation} of $\mu$, and $f\in L^1 (X,\mu)$ takes only the values $\pm 1$.   }

A family $\mathcal F$ of signed Radon measures on $X$ is \emph{uniformly bounded} if for any compact subset $K\subset X$ there exists a constant $C(K)>0$ such that $|\mu| (K) \leq C(K)$ for any $\mu \in \mathcal F$.
Any uniformly bounded sequence of signed measures $\mu _i$ has a convergent subsequence.

The following lemma will be repeatedly used.

\begin{lem} \label{lem:exchange}
Let $X$ be a metric space with two Radon measures $\mu $ and $\nu$. Let $r>0$ be arbitrary and let $A\subset X$ be a Borel subset. Then
$$\int _A \mu (B(x,r)) \cdot d\nu (x) \leq \int _{B (A,r)} \nu (B (x,r)) \cdot d\mu (x).$$
\end{lem}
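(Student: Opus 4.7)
The plan is to recognize this inequality as a straightforward consequence of Fubini--Tonelli applied to the symmetric relation $d(x,y)<r$, followed by a single coarse estimate.

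First I would rewrite the left-hand side as a double integral. Since the function $(x,y)\mapsto d(x,y)$ is continuous, the set $\{(x,y)\in X\times X : d(x,y)<r\}$ is open, hence Borel, and $A$ is Borel by assumption. Thus the integrand $\mathbf{1}_A(x)\,\mathbf{1}_{\{d(x,y)<r\}}$ is jointly Borel measurable, and since both $\mu$ and $\nu$ are Radon (in particular $\sigma$-finite on compacts), we may apply Tonelli. Writing $\mu(B(x,r))=\int \mathbf{1}_{\{d(x,y)<r\}}\,d\mu(y)$ gives
\[
\int_A \mu(B(x,r))\,d\nu(x)=\int_X\!\int_X \mathbf{1}_A(x)\,\mathbf{1}_{\{d(x,y)<r\}}\,d\mu(y)\,d\nu(x).
\]

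Next I would switch the order of integration. Because $d(x,y)<r$ is symmetric in $x$ and $y$, the inner integral in $x$ becomes $\nu(A\cap B(y,r))$, so the expression equals
\[
\int_X \nu\bigl(A\cap B(y,r)\bigr)\,d\mu(y).
\]

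Finally I would localize the domain of integration and drop the intersection with $A$. If $y\notin B(A,r)$, then no point $x\in A$ satisfies $d(x,y)<r$, so $A\cap B(y,r)=\emptyset$ and the integrand vanishes. On $B(A,r)$ I simply bound $\nu(A\cap B(y,r))\leq\nu(B(y,r))$, giving
\[
\int_X \nu\bigl(A\cap B(y,r)\bigr)\,d\mu(y)\leq \int_{B(A,r)}\nu(B(y,r))\,d\mu(y),
\]
which is the desired inequality. There is no real obstacle here; the only point that deserves a moment's care is the joint measurability of the integrand, which is why I would state explicitly that $\{d(x,y)<r\}$ is open in $X\times X$ before invoking Tonelli.
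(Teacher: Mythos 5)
Your proof is correct and is essentially the paper's argument: both reduce the inequality to Fubini--Tonelli on the product measure over the open set $\{d(x,y)<r\}$, and your final step (the integrand vanishes off $B(A,r)$ and $\nu(A\cap B(y,r))\le\nu(B(y,r))$) is exactly the paper's inclusion $S\subset T$ of the two relevant subsets of $X\times X$. Your explicit remark on joint measurability is a welcome, if minor, addition.
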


\begin{proof} Due to Fubini's theorem
the left hand side is the volume of
$$S=\set{(y,x) \in X\times X}{y \in A,\, d(y,x) <r}$$
with respect to the product measure $\nu \otimes \mu$. %???why not \oplus???
And on the right hand side of the inequality is the volume of the larger set
$$T=\set{(y,x) \in X\times X}{x\in B(A,r), \, d(y,x)<r}$$
with respect to the same measure.

Since $S\subset T$, the statement follows.
\end{proof}

\subsection{Alexandrov spaces} \label{subsec:Alex}
We are assuming that the reader is familiar with basic theory of Alexandrov spaces and refer to~\cite{BGP} as an introduction to the subject.
In this paper, an \emph{Alexandrov space} is a complete, locally compact, geodesic metric space of finite Hausdorff dimension and of curvature bounded from below by some $\kappa \in \R$.
For Alexandrov spaces, an upper index will indicate the Hausdorff dimension; that is, $X^n$ denotes an $n$-dimensional Alexandrov space, equipped with the $n$-dimensional Hausdorff measure $\mathcal H^n$.

The set of starting directions of geodesics starting at a given point $x\in X$ carries a natural metric, whose completion is the \emph{tangent space} $T_x=T_xX$
of $X$ at the point $x$. It is  an $n$-dimensional Alexandrov space of non-negative curvature. Moreover, it is the Euclidean cone
over the space $\Sigma _x$ of unit directions. The Euclidean cone structure defines multiplications by positive scalars $\lambda \geq 0$ on $T_xX$. The origin
of the cone $T_xX$ is denoted by $0=0_x$.
Elements of $T_xX$ are called \emph{tangent vectors} at $x$, despite that $T_xX$ is not a vector space in general.
For $v\in T_xX$ the \emph{norm} $|v|$ of $v$ is the distance of $v$ from the origin $0_x$.

Geodesics in $X$ do not branch,  moreover, any two geodesics with  identical starting vectors coincide.
For $x\in X$ the \emph{exponential map} $\exp_x$ is defined as follows.
Let $D_x$ denote the set of all vectors $v\in T_xX$ for which there exists an (always unique) \emph{minimizing geodesic} $\gamma_v\:[0,1] \to X$ with starting direction $v$. The exponential map is defined on $D_x$ as
$$\exp _x (v)= \gamma _v(1).$$
For any $r>0$, the map $\exp_x$ sends $D_x\cap B (0_x,r)\subset T_xX$ surjectively onto $B(x,r)\subset X$.
Moreover, for a constant $C=C(\kappa)\geq 0$ and all $r< \frac 1 C$ the map $\exp_x \:D_x \cap B(0_x,r) \to B(x,r)$ is $(1+ C \cdot r^2)$-Lipschitz continuous.

By the theorem of Bishop--Gromov, the volume $b_r (x)=\mathcal H^n (B(x,r))$ is bounded from above by the corresponding volume in the
space of constant curvature $\kappa$. In particular, $b_r (x) \leq \omega _n \cdot r^n + C \cdot r^{n+2}$ for all $r\leq \frac 1 C$, where
the constant $C$ can be chosen as before. Thus, the deviation measures $\mathcal{V}_r $ from \eqref{eq:first} satisfy
$$\mathcal{V}_r \geq -C\cdot r^2\cdot  \mathcal H^n,$$
for all sufficiently small $r$.
Here and in the previous paragraph, one can set $C=0$ if $\kappa \geq 0$.

Denote by $X_{reg}$ the set of all points $x\in X$ with $T_xX$ isometric to the Euclidean space.
The set $X_{reg}$ has full $\mathcal H^n$-measure in $X$.
Any inner point of any geodesic starting on $X_{reg}$ is contained in $X_{reg}$, \cite{Petparallel}.

The topological boundary $\partial X$ of $X$ can be defined as the closure of the set of all points $x\in X$ with $T_xX$ isometric to a Euclidean half-space. Up to a subset of Hausdorff dimension $n-2$, $\partial X$ is an $(n-1)$-dimensional Lipschitz manifold.

\subsection{Volume and bi-Lipschitz maps}
Let $\mu =\mathcal H^n$ be a Radon measure on the metric space $X$.
Let $U\subset X$ and $V\subset \R^n$ be open and assume that there is an $(1+\delta)$-bi-Lipschitz map $f\:U \to V$;
that is,
\[\frac1{1+\delta}\le\frac{|f(x)-f(y)|}{d(x,y)}\le 1+\delta\]
for any pair of distinct points $x,y\in U$.

Let $A\subset U$ be given with
$B (A,{(1+\delta) \cdot r}) \subset U$ and $B (f(A),{(1+\delta) \cdot r}) \subset V$. Then, for all $x\in A$,
\begin{equation} \label{eq:bilip}
(1+\delta) ^{-2\cdot n} \leq \frac {b_r(x)} {\omega _n \cdot r^n} \leq (1+\delta ) ^{2\cdot n}.
\end{equation}
Therefore, if $\delta $ is sufficiently small, $|\mathcal{V}_r | (A) \leq 3\cdot  n \cdot \delta \cdot \mathcal H^n (A).$

\section{Liouville measure and geodesics} \label{sec:Liou}
\subsection{Tangent bundle and Liouville measure} \label{subsec:tb}
Let $X$ be an $n$-dimensional Alexandrov space.
Denote by $TX$ the disjoint union of the tangent spaces at all points,
\[TX=\bigsqcup_{x\in X} T_x X.\]
Let $\pi\:TX\to X$ be the footpoint projection, so $\pi (T_xX)=\{x\}$ for any $x\in X$. %pi --> tau???
For a subset $K\subset X$ denote by $TK$ the inverse image $\pi^{-1} (K)= \cup _{x\in K} T_xX$. %???T_K???
Given $r>0$, denote by $T^r K$ the set of all vectors in $TK$ of norm smaller than $r$.

The Riemannian structure on the set of regular points discussed in \cite{Otsu-Shioya} (see also \cite{Shioya}, \cite{Per-DC}) provides $TX_{reg}$ with a structure of a Euclidean vector bundle over $X_{reg}$. In this topology, for any sequence of geodesics $\gamma _i$ in $X_{reg}$ converging to a geodesic $\gamma$, the starting directions of $\gamma _i$ converge to the starting direction of $\gamma$.

On the Euclidean vector bundle $TX_{reg}$ over $X_{reg}$ we have a natural choice of measure, which locally coincides with the product measure of $\mathcal H^n _X$ and the Lebesgue measures on the fibers. More precisely,
it is the unique Borel measure $\mathcal M$ on $TX_{reg}$ such that for any Borel set $A\subset TX_{reg}$
$$\mathcal M(A)= \int _X \mathcal H^n(A \cap T_x X) \cdot d\mathcal H^n (x).$$
We extend $\mathcal M$ to a measure on $TX$ by setting $\mathcal M(TX\setminus TX_{reg})$ to be $0$.

By the definition, a subset $A\subset TX$ is $\mathcal M$-measurable if and only if there exists a Borel subset $A'\subset A\cap TX_{reg}$ such that
for $\mathcal H^n$-almost all $x\in X$ the intersection $(A\setminus A') \cap T_xX$ has $\mathcal H^n$-measure zero in $T_xX$.

For any $\lambda>0$, we have $\mathcal M(\lambda\cdot A)=\lambda^n\cdot\mathcal M (A)$ for any measurable set $A\subset TX$.
The involution $I\:TX_{reg}\to TX_{reg}$, defined by $I(v)=-v$,
preserves $\mathcal M$
since it preserves the Lebesgue measure in each tangent space.

\subsection{Geodesic flow}
Let us define the geodesic flow $\phi$ on a maximal subset $\mathcal F$ of $TX\times\R$.

For any $v\in T_xX$ we set $\phi _0(v)=v$.
If no geodesic starts in the direction of $v$,
the value $\phi _t(v)$ will not be defined for $t\neq 0$. If such a geodesic $\gamma_v$ exists, then $\gamma_v$ can be uniquely extended to a maximal possible half-open interval $\gamma_v \:[0,a)\to X$. For $t\geq a$
the value $\phi _t(v)$ will not be defined. For $0<t<a$ we set $\phi _t(v)$ to be $\gamma _v ^+ (t) \in T_{\gamma _v(t)} X$, the starting direction of $\gamma_v\:[t,a) \to X$ at $\gamma_v (t)$.

If the geodesic $\gamma _v\:[0,a)\to X$ extends to an (again uniquely defined, maximal) geodesic $\gamma_v\:(b,a) \to X$ for some $b<0$
then we define $\phi _t(v)$ for $b<t<0$ to be $\gamma_v ^+ (t) $ as above.

We denote by $\mathcal F$ the set of all pairs  $(v,t)\in TX\times\R$  for which $\phi_t(v)$ is defined.

For $\lambda >0$, for $t,s\in \R$ and $v\in T_x X$ we have
$$\phi _t(\lambda \cdot v) =\lambda \cdot \phi _{\lambda \cdot t} (v)
\quad\text{and}\quad
\phi _{t+s} (v) =\phi _t ( \phi _s (v)),$$
whenever the right hand side is defined.

The partial flow $\phi$ preserves the norm of tangent vectors. Since inner points of geodesics starting in $X_{reg}$ are contained in $X_{reg}$, the set $TX_{reg}$ is invariant under the flow $\phi$.
%On $TX_{reg}$ we have
%$I\circ \phi_t = \phi _{-t} \circ I$, whenever both sides are defined.

By construction, the domain of the definition of the geodesic flow almost includes the domain of the definition of the exponential map.
More precisely, consider the set
\[D=\bigcup_{x\in X} D_x\subset TX;\]
that is, the set of all vectors $v\in TX$ for which $\exp _{\pi(v)} (v)$ is defined.
Note that $\lambda\cdot D\subset D$ for any $0\leq \lambda \leq 1$.
Moreover, for all $v\in D$ and all $0\leq \lambda <1$
the geodesic flow $\phi_1(\lambda\cdot v)$ is defined (equivalently $(\lambda\cdot v,1)\in \mathcal F$) and
\[\pi(\phi_1 (\lambda\cdot  v))= \exp _{\pi (v)} (\lambda\cdot  v).\]

Thus, for $\mathcal M$-almost all $v\in D$ we have the following
\begin{itemize}
\item $v\in TX_{reg}$;
\item $\phi _1(v) \in TX_{reg}$ is defined,  hence $(v,1)\in \mathcal F$;
\item $w=-\phi_1 (v)\in D$ and
\begin{equation} \label{eq:symm}
(\pi (w), \exp (w))=(\exp (v), \pi (v)) \in X\times X.
\end{equation}
\end{itemize}

\subsection{Measurability} \label{subsec:measur}
In order to use measure theoretic arguments we will need the following lemma, see also Subsection \ref{subsec:quasi}.
\begin{lem}\label{lem:measurability}
The set $\mathcal F\subset TX\times \R$ is measurable with respect to the product of the Liouville's measure $\mathcal M$ on $TX$ and the Lebesgue measure on $\R$.
Moreover the map $\phi \:\mathcal F \to TX$ is measurable.
\end{lem}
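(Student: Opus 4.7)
The plan is to reduce the problem to verifying Borel measurability on the Polish subspace $TX_{reg}$ (on which $\mathcal M$ is concentrated) and then to describe $\mathcal F$ and $\phi$ by countable Borel operations built from the exponential map and the ``direction map'' $(x,y)\mapsto\uparrow_x^y\in\Sigma_xX$.

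First, since $\mathcal M$ vanishes on $TX\setminus TX_{reg}$, it suffices to show that $\mathcal F\cap(TX_{reg}\times\R)$ is Borel and that $\phi$ is Borel on this set. The Euclidean vector bundle structure from Subsection~\ref{subsec:tb} makes $TX_{reg}$ into a Polish space. Two Borel maps will serve as building blocks: the exponential map, defined and locally Lipschitz on the Borel set $D\cap TX_{reg}$ (Subsection~\ref{subsec:Alex}), and the direction map $\uparrow_x^y$, which is Borel on the Borel set of pairs joined by a unique minimizing geodesic.

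Second, I would describe $\mathcal F$ by a chain-of-segments construction. Since a geodesic is locally minimizing, $(v,t)\in\mathcal F$ (for $v\ne 0$, $t>0$) if and only if there exist rationals $0=s_0<\dots<s_k$ with $s_k>t$ and points $x_0=\pi(v),x_1,\dots,x_k\in X$ such that each pair $(x_{i-1},x_i)$ is joined by a unique minimizing geodesic of length $(s_i-s_{i-1})\cdot|v|$, the initial direction of the first segment at $x_0$ equals $v/|v|$, and at every interior breakpoint the directions $\uparrow_{x_i}^{x_{i-1}}$ and $\uparrow_{x_i}^{x_{i+1}}$ are antipodal in $\Sigma_{x_i}$. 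Each of these is a Borel condition, so $\mathcal F\cap(TX_{reg}\times\R)$ is a countable union of projections of Borel sets; the value of $\phi_t(v)$ is read off such a chain as $|v|\cdot\uparrow_{x_i}^{x_{i+1}}$ for $s_i\le t<s_{i+1}$.

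Finally, I would upgrade the existential description to genuine Borel measurability. Non-branching of geodesics in Alexandrov spaces guarantees that distinct chains witnessing the same pair $(v,t)$ produce the same value $\phi_t(v)$. A canonical family of approximating chains (for example, refinements indexed by dyadic partitions of a prescribed interval, with each $x_i$ computed iteratively from $\exp$) yields $\phi_t(v)$ as a pointwise limit of Borel functions. Equivalently, one may invoke a Lusin--Kuratowski--Ryll-Nardzewski-type selection theorem in this Polish setting. The main obstacle is precisely this last step---turning an a priori analytic/existential description into honest Borel measurability---and the decisive input is the non-branching of geodesics, which eliminates all ambiguity in the choice of witnessing chain.
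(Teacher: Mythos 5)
Your opening reduction to $TX_{reg}$ matches the paper, but the core of your argument --- the chain characterization of $\mathcal F$ --- has a genuine gap. The ``if'' direction rests on the claim that a concatenation of minimizing geodesics meeting at angle $\pi$ at each breakpoint is a local geodesic. That implication is a theorem for \emph{upper} curvature bounds, where comparison gives $\tilde{\angle}(\alpha(s),y,\beta(t))\ge\angle=\pi$ and hence $d(\alpha(s),\beta(t))=s+t$; for curvature bounded below the monotonicity runs the other way, $\tilde{\angle}(\alpha(s),y,\beta(t))\le\angle=\pi$, which is vacuous, so the angle condition controls the concatenation only to first order. It is in fact false in general: on a Baire-generic convex surface most points are interior points of no geodesic (the Zamfirescu result quoted in the introduction), yet at such a point with full tangent circle the set of directions of minimizers has full measure in $\Sigma_y$ and hence contains antipodal pairs; the corresponding concatenation satisfies your conditions but is not a geodesic. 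So the set your chains describe can strictly contain $\mathcal F$, and the vector read off a chain need not be $\phi_t(v)$. (There is also a small slip at the end: $\phi_t(v)$ is a vector based at $\gamma(t)$, not at $x_i$.)

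Two further remarks. First, even with a corrected characterization, the existential quantifier over the interior points $x_1,\dots,x_k\in X$ exhibits $\mathcal F$ only as a projection of a Borel set, i.e.\ as an analytic set. For measurability with respect to the complete measure $\mathcal M\otimes\mathcal L^1$ this would actually suffice (analytic sets are universally measurable, and a single-valued map with analytic graph has universally measurable preimages of open sets), so the selection step you flag as the main obstacle can be bypassed --- but none of this machinery is needed. The paper's proof is much shorter and avoids junctions entirely: stratify $\mathcal F\cap(TX_{reg}\times\R)=\bigcup_k\mathcal F_k'$, where $\mathcal F_k'$ consists of the pairs $(v,\tau)$ whose rescaled curve is minimizing on every parameter subinterval of length $1/k$. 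A limit of such $k$-geodesics is again a $k$-geodesic, and in the topology of $TX_{reg}$ initial directions of converging geodesics converge; hence each $\mathcal F_k'$ is closed and $\phi$ is continuous on it, so $\mathcal F\cap(TX_{reg}\times\R)$ is $F_\sigma$ and $\phi$ is Borel there. You may want to redo your argument along these lines, keeping only your (correct) observation that it suffices to work on $TX_{reg}$.
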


\begin{proof}
Fix $(v,\tau)\in \mathcal F$ and set $\gamma(\tau\cdot t)=\phi_t(v)$, $t\in[0,1]$.
Note that there exists some $k>0$ such that the restriction of $\gamma$ to any subinterval of length $\frac 1 k$ is a (minimizing) geodesic.
 We will call such $\gamma$ a  $k$-geodesic and  write $(v,\tau)\in \mathcal F_k$.

 The limit of any converging sequence of $k$-geodesics is a $k$-geodesic.
Hence $\mathcal F_k'=\mathcal F_k\cap (TX_{reg} \times \R)$ is a closed set in $TX_{reg} \times \R$.
 Therefore, $\mathcal F\cap (TX_{reg} \times \R)$  is a countable union of closed subsets $\mathcal F_k'$, hence measurable.
Moreover, the restriction $\phi\: \mathcal F_k' \to TX_{reg}$ is continuous and, therefore,
$\phi \:\mathcal F\cap (TX_{reg} \times \R) \to TX_{reg}$ is a Borel-measurable map.

Since $\mathcal M(X\backslash X_{reg})=0$ the statement follows.
\end{proof}

\subsection{Liouville property}
Denote by $\mathcal G$ the set of all vectors $v\in TX $ such that $\phi _t(v)$ is defined for all $t\in \R$.
Note that $\mathcal G$ contains the $0$-section;
it is invariant under multiplications by any $\lambda>0$
and it is invariant under the geodesic flow $\phi$.
Moreover, $\mathcal G\cap TX_{reg}$ is invariant under the involution $I (v)=-v$.

\begin{defn}
We say that an Alexandrov space $X$ has the \emph{Liouville property} if $\mathcal M(TX\setminus \mathcal G)=0$ and
for any $t\in \R$ the geodesic flow $\phi _t \:\mathcal G\to \mathcal G$ preserves the Liouville measure.
\end{defn}

The Liouville property can be checked infinitesimally using the following lemma.

\begin{lem} \label{infini}
An Alexandrov space $X$ \emph{does not} have the Liouville property if and only if there is a compact subset $K\subset X$, a positive number
$\varepsilon$ and a sequence of positive numbers $r_m \to 0$ with the following property.
For every $m$, there exists a Borel subset $A_m\subset T^{r_ m} K$ such that
\begin{equation} \label{eq:m}
\varepsilon \cdot r_m^{n +1} \leq \mathcal M (A_m) -\mathcal M (\phi _1 (A_m)).
\end{equation}
 Here $\phi_1 (A_m)$ is the set of all $\phi _1(v), v\in A_m$, for which
$\phi _1(v)$ is defined.
\end{lem}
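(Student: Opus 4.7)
The plan is to prove both implications separately, with the hard direction exploiting the scaling identities $\phi_t(\lambda\cdot v)=\lambda\cdot\phi_{\lambda\cdot t}(v)$ and $\mathcal M(\lambda\cdot A)=\lambda^n\cdot\mathcal M(A)$ to convert a single macroscopic failure of measure preservation at some positive time into a whole sequence of microscopic failures of $\phi_1$ at scales tending to zero.

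Direction ``if'' is immediate: were $X$ Liouville, then $\mathcal M(A_m\setminus\mathcal G)=0$ and measure preservation of $\phi_1$ on $\mathcal G$ would give $\mathcal M(\phi_1(A_m))\geq \mathcal M(\phi_1(A_m\cap\mathcal G))=\mathcal M(A_m\cap\mathcal G)=\mathcal M(A_m)$, contradicting \eqref{eq:m}. For the opposite direction, introduce the loss $L(B,t):=\mathcal M(B)-\mathcal M(\phi_t(B))$ for a Borel set $B$, with $\phi_t(B)$ interpreted as in the lemma. Two identities drive the argument: the additivity $L(B,t_1+t_2)=L(B,t_1)+L(\phi_{t_1}(B),t_2)$, a direct consequence of $\phi_{t_1+t_2}=\phi_{t_2}\circ\phi_{t_1}$ on its domain, and the scaling $L(\lambda\cdot B,1)=\lambda^n\cdot L(B,\lambda)$, which follows from the two identities of the opening paragraph.

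Step 1 (macroscopic discrepancy): I produce a compact $K_0\subset X$, positive numbers $R_0, t_0, \delta$, and a Borel set $B\subset T^{R_0}K_0$ with $L(B,t_0)\geq \delta$. If $\mathcal M(TX\setminus\mathcal G)>0$, the involution $I(v)=-v$ (which preserves $\mathcal M$) reduces me to the forward-incomplete case, and the nested sets $\set{v}{\phi_t(v)\text{ undefined}}$ supply $t_0, R_0, K_0$ for which $B:=\set{v\in T^{R_0}K_0}{\phi_{t_0}(v)\text{ undefined}}$ has positive measure; since $\phi_{t_0}(B)=\emptyset$, this gives $L(B,t_0)=\mathcal M(B)>0$. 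If instead some $\phi_{t_0}$ fails to preserve $\mathcal M$ on a bounded set, I extract $B$ directly, and if the sign of the discrepancy is wrong I replace $B$ by $I(\phi_{t_0}(B))$ and use $\phi_{-t}=I\phi_tI$ together with $I_\ast\mathcal M=\mathcal M$ to flip it.

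Step 2 (halving and rescaling): applying additivity dyadically produces Borel sets $B^{(N)}$ with $L(B^{(N)}, t_0/2^N)\geq \delta/2^N$, each reached from $B$ by a composition of flows of total time less than $t_0$; since $\phi$ preserves norms and Alexandrov spaces are proper, every $B^{(N)}$ lies in the fixed set $T^{R_0}K_\ast$ with $K_\ast:=\overline{B(K_0, R_0\cdot t_0)}$ compact. Setting $\lambda_N:=t_0/2^N$ and $A_N:=\lambda_N\cdot B^{(N)}\subset T^{r_N}K_\ast$ with $r_N:=\lambda_N\cdot R_0\to 0$, the scaling identity delivers
\[L(A_N,1)=\lambda_N^n\cdot L(B^{(N)},\lambda_N)\geq \frac{t_0^n\cdot\delta}{2^{N(n+1)}}=\frac{\delta}{t_0\cdot R_0^{n+1}}\cdot r_N^{n+1},\]
so any $\varepsilon<\delta/(t_0\cdot R_0^{n+1})$ works uniformly in $N$, yielding \eqref{eq:m} with $K=K_\ast$. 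The main obstacle is Step 1, where one must handle the two distinct failure modes of the Liouville property in a single framework and keep $\phi_t(B)$ measurable throughout the dyadic iteration; \lref{lem:measurability}, the injectivity of $\phi_t$ on its domain, and the inner regularity of $\mathcal M$ on compacts combine to cover this, but the bookkeeping requires care.
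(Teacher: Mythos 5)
Your argument is correct and is essentially the paper's own proof: the easy direction is the same one-line observation, and the hard direction is the same dyadic-subdivision argument — your additivity identity $L(B,t_1+t_2)=L(B,t_1)+L(\phi_{t_1}(B),t_2)$ and scaling identity $L(\lambda\cdot B,1)=\lambda^n\cdot L(B,\lambda)$ are exactly what the paper exploits via $\phi_1(A)=2\cdot\phi_1\circ\phi_1(\tfrac12\cdot A)$, the only cosmetic difference being that you halve the time at the original scale and rescale once at the end, while the paper normalizes to $t_0=1$ upfront by homogeneity and interleaves the rescaling with each halving step (keeping the sets in growing but uniformly bounded neighborhoods $K_m=B(K_{m-1},r_m)$ rather than in a single $T^{R_0}K_*$). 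Your Step 1 and the measurability caveats are in fact spelled out in more detail than the paper's corresponding one-sentence assertion.
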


\begin{proof}
If at least one $r_m$ with the above property exists, then $X$ does not have the Liouville property by definition.

Assume that $X$ does not have the Liouville property.
Then, by homogeneity of the geodesic flow, $\phi _1$ is either undefined  on a subset of $TX$ with positive measure
or it does not preserve the measure $\mathcal M$.
In both cases we can find a compact subset $K_1\subset X_{reg}$, a Borel subset $A\subset T^1 K_1$  and $\varepsilon >0$ such that
\[\varepsilon < \mathcal M (A) - \mathcal M(\phi_1 (A)).\]

Since
\begin{align*}
\phi_1 (A)&=2\cdot \phi _2 (\tfrac 1 2 \cdot A )=
\\
&=2\cdot \phi _1 \circ \phi_1 (\tfrac 1 2\cdot  A ) ,
\end{align*}
we deduce
\begin{align*}
\frac{\varepsilon}{2^n} &\leq \mathcal M \big(\tfrac 1 2\cdot A \big) - \mathcal M \big( \phi_1 (\tfrac 1 2\cdot A ) \big) +
\\
&\quad+\mathcal M \big(\phi_1 (\tfrac 1 2\cdot A )\big)
- \mathcal M\big (\phi_1 \big (\phi _1 (\tfrac 1 2\cdot A )\big) \big).
\end{align*}
Thus, taking either $A_{\frac 1 2} := \frac 1 2 \cdot A$ or $A_{\frac 1 2} := \phi_1 (\frac 1 2 \cdot A)$ we infer
$$ \frac { \varepsilon } {2^{n+1}}  < \mathcal M \big(A _{\frac 1 2} \big) - \mathcal M \big( \phi_1 (A _{\frac 1 2}) \big).$$
The set $A_{\frac 1 2}$ constructed above is contained in $T^{\frac 1 2} K_{\frac 1 2}$, where $K _{\frac 1 2} =B (K_1,{\frac 1 2})$.

Iterating the above procedure we obtain, for $r_m =\frac 1 {2^m}$, a subset $A_{r_m} \subset T^{r_m} K_m$ where $K_m =B (K_{m-1},{r_m})$ and such that \eqref{eq:m} holds true.

The claim follows since all $K_m$ are contained in the set $B (K_1,1)$, whose closure is compact, by completeness of $X$.
\end{proof}

\begin{rem}
The completeness of the space $X$ is used in the proof of \tref{thmmain} only once, namely in the last line of the above proof.
\end{rem}

\subsection{Relation with the mm-boundary} Let us interpret the deviation measures $\mathcal{V}_r$ from \eqref{eq:first} in suitable geometric terms.

Let $K\subset X$ be measurable and let $r>0$ be arbitrary.
Since $\mathcal H^n (X\setminus X_{reg} )=0$, we have
$$\mathcal M (T^{r} K) =\omega _n \cdot r^n \cdot \mathcal H^n (K).$$
Denote now by $U^r(K)$
the set of all pairs $(x,y)\in X\times X$ with $x\in K$ and $d(x,y)<r$.
By Fubini's theorem the set $U^r(K)$ is $\mathcal H^n \otimes \mathcal H^n =\mathcal H^{2\cdot n}$ measurable and we have
$$\mathcal H^{2\cdot n} (U^r (K))= \int _K b_r (x) \cdot d\mathcal H^n (x). $$
Taking both equations together, we see that the signed measure $\mathcal{V}_r$ expresses the difference between $\mathcal H^{2\cdot n}$ and $\mathcal M$. More precisely,
\begin{equation} \label{eq:compare}
\mathcal{V}_r (K) = \frac 1 {\omega_n \cdot r^n}\cdot \Big(\mathcal M (T^r K)- \mathcal H^{2\cdot n} (U^r (K)) \Big).
\end{equation}

The following statement is a reformulation of \tref{thmmain}.

\begin{thm} \label{reform}
If an Alexandrov space $X$ has vanishing  mm-boundary then it has the Liouville property.
\end{thm}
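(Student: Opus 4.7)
The plan is to argue by contradiction via \lref{infini}: suppose the Liouville property fails and extract a compact $K\subset X$, a positive $\varepsilon$, a sequence $r_m\to 0$, and Borel sets $A_m\subset T^{r_m}K$ with $\mathcal M(A_m) - \mathcal M(\phi_1 A_m) \ge \varepsilon r_m^{n+1}$. Following the heuristic in the introduction, the aim is to transport $\mathcal M$ and $(\phi_1)_*\mathcal M$ to $X\times X$ via $E(v) = (\pi(v),\exp(v))$, where the symmetry under $J$ is manifest, and to exhibit the resulting discrepancy as a quantity controlled exactly by the deviation measure $\mathcal V_r$.

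For this I introduce on $TX$ the auxiliary Radon measure $\tilde\mu_r$, supported on $T^rX$, defined by $\tilde\mu_r(A) := \mathcal H^{2n}(E(A))$ for Borel $A\subset T^rX$. The key observation is that $\tilde\mu_r$ is invariant under the involution $\Phi(v) := -\phi_1(v)$: using $E\circ\Phi = J\circ E$ (see \eqref{eq:symm}) and the $J$-invariance of the diagonal neighborhood $U^r = U^r(X)$,
\[\Phi_*\tilde\mu_r(A) = \tilde\mu_r(\Phi A) = \mathcal H^{2n}(J E(A)) = \mathcal H^{2n}(E(A)) = \tilde\mu_r(A).\]
Setting $\lambda_r := \mathcal M - \tilde\mu_r$, this $\Phi$-invariance combined with $I_*\mathcal M = \mathcal M$ gives the clean identity
\[\mathcal M(A) - \mathcal M(\phi_1 A) = \lambda_r(A) - \lambda_r(\Phi A)\]
for any Borel $A$ in the relevant domain. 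Taking $K^* := \bar B(K,1)$ (compact), both $A_m$ and $\Phi A_m$ lie in $T^{r_m}K^*$ whenever $r_m\le 1$, and the contradictory assumption translates into $\varepsilon\, r_m^{n+1} \le 2\,|\lambda_{r_m}|(T^{r_m}K^*)$.

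It then remains to show $|\lambda_{r_m}|(T^{r_m}K^*) = o(r_m^{n+1})$. The $(1+Cr^2)$-Lipschitz bound on $\exp_x$ recalled in Subsection~\ref{subsec:Alex} gives $\tilde\mu_r \le (1+Cr^2)^n\,\mathcal M$ on $T^rK^*$, whence $(\lambda_r)_- \le O(r^2)\,\mathcal M$ and $(\lambda_r)_-(T^rK^*) = O(r^{n+2})$. On the other hand \eqref{eq:compare} yields $\lambda_r(T^rK^*) = \omega_n r^n\,\mathcal V_r(K^*)$, so the elementary identity $|\lambda_r| = \lambda_r + 2(\lambda_r)_-$ implies
\[|\lambda_r|(T^rK^*) \le \omega_n r^n\,|\mathcal V_r(K^*)| + O(r^{n+2}).\]
Vanishing of the mm-boundary, together with the Bishop--Gromov bound $(\mathcal V_r)_-\le Cr^2\mathcal H^n$ (which already gives $(\mathcal V_r)_-/r \to 0$ uniformly on compacts), forces $|\mathcal V_r(K^*)|/r\to 0$, closing the contradiction. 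The main technical nuisance is that $E$ is injective and $\phi_1$ defined only on full-$\mathcal M$-measure subsets; but all of the above identities are evaluated modulo $\mathcal M$-null sets via the formalism of Subsections~\ref{subsec:tb}--\ref{subsec:measur}, and the conceptual heart of the argument is the $\Phi$-invariance of $\tilde\mu_r$.
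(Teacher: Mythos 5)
Your argument is correct and is essentially the paper's proof in a different packaging: the reduction via \lref{infini}, the total exponential map $E$, the symmetry $E\circ(-\phi_1)=J\circ E$ combined with the $I$- and $J$-invariance of $\mathcal M$ and $\mathcal H^{2\cdot n}$, the fibrewise $(1+C\cdot r^2)$-Lipschitz bound on $\exp_x$, and the identification \eqref{eq:compare} of the total-mass defect with $\omega_n\cdot r^n\cdot\mathcal{V}_r$ all coincide with the paper's steps, with your identity $|\lambda_r|=\lambda_r+2\cdot(\lambda_r)_-$ playing the role of the paper's ``one-sided contraction plus matching total masses implies almost measure-preserving'' argument. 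The only point to make explicit is that $A\mapsto\mathcal H^{2\cdot n}(E(A\cap D))$ is a priori merely subadditive unless $E$ is essentially injective (which it is, since for a.e.\ $v\in D_x$ the geodesic $\gamma_v$ extends as a minimizer past $t=1$ and is therefore the unique minimizer to its endpoint); alternatively, subadditivity alone already yields the two-sided bound $|\mathcal M(A)-\tilde\mu_r(A)|\le\omega_n\cdot r^n\cdot\mathcal{V}_r(K^*)+O(r^{n+2})$ that your final estimate needs, which is exactly how the paper circumvents the issue.
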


\begin{proof}
Arguing by contradiction, assume that $X$ does not have the Liouville property.
Consider the compact subset $K \subset X$, the positive numbers $\varepsilon, r_m$ and the Borel
subsets $A_m\subset T^{r_m} K$ provided by \lref{infini}.

Let $Y$ be the closure of $B (K,1)$.
Recall that $D\subset TX$ is the set of all vectors at which the exponential map is defined.
For $r>0$, denote by $D^r$ the intersection of $D$ with $T^r Y$ and consider the ``total exponential map''
$E\:D^r \to X\times X$ given by
$$E(v)= (\pi (v), \pi (\exp (v))).$$
As above, let $U^r =U^r(Y)$ be the set of all
pairs $(y,x) \in X\times X$ with $y\in Y$ and $d(x,y)<r$.
Note that
\begin{equation} \label{eq:image}
E(D^r) =U^r.
\end{equation}
Moreover, for any fixed $x \in Y$, the restriction of $E$ to $D_x \cap D^r$ is
a $(1+ C r^2)$-Lipschitz continuous map from $D_x\subset T_xX$ onto the set $U^r \cap (\{x \} \times X)$ (see Subsection \ref{subsec:Alex}).
Thus, for all sufficiently small $r$, and any Borel subset $S\subset D_x \cap D^r$, we have
$$\mathcal H^n (E(S)) \leq (1+4{\cdot} n{\cdot}  C{\cdot}  r^2)\cdot \mathcal H^n (S).$$
Using the definition of the Liouville measure $\mathcal M$ and Fubini's formula for the product measure
$\mathcal H^{2\cdot n} =\mathcal H^n \otimes \mathcal H^n $ on $X\times X$ we obtain for any $\mathcal M$-measurable subset $S$ of $D^r$
\begin{equation} \label{eq:contract}
\mathcal H^{2\cdot n} (E(S)) \leq (1+4{\cdot} n{\cdot} C{\cdot}  r^2) \cdot \mathcal M(S).
\end{equation}
Due to \eqref{eq:compare}, the vanishing  of the mm-boundary of $X$ implies
$$\lim _{r\to 0} \frac 1 {r^{n+1} }\cdot |\mathcal M(T^{r} Y) -\mathcal H^{2\cdot n} (U^r)| =0.$$
Thus, up to terms of  order higher than $r^{n+1}$, the map $E$ does not increase the measure of subsets, but the total mass of the image
coincide with the total mass of the target.
Therefore, $E$ is measure preserving  up to terms of  order higher than $r^{n+1}$ on all subsets of $T^r Y$.
% \footnote{Is this explanation helpful? \color{red} too long IMO. I shortened it a bit, Vitya}
More precisely, combining the last two formulas we obtain for every $\delta >0$ the existence of some $s>0$ with the following property. For all $0<r<s$ and all measurable subsets $S\subset D^r$
we have
\begin{equation} \label{eq:almostall}
\begin{aligned}
 \mathcal M (T^r Y)- \mathcal M (D^r) &< \delta{\cdot}r^{n+1},
\\
|\mathcal H^{2\cdot n} (E(S)) - \mathcal M (S)| &< \delta{\cdot}r^{n+1}.
\end{aligned}
\end{equation}
For any measurable subset $S\subset D^r \cap TK$ we now claim
\begin{equation} \label{eq:finally}
|\mathcal H^{2\cdot n} (E(S)) - \mathcal M (\phi _1 (S))| < 2{\cdot}\delta{\cdot}r^{n+1}.
\end{equation}
In order to prove \eqref{eq:finally}, let $S^+$ be the subset of all vectors $v\in S $ for which $\phi_1 (v)$ exists and is contained in $TX_{reg}$.
For all $v\in S^+$, we have $-\phi _1 (v) \in D^r$ and, due to \eqref{eq:symm},
$$E(-\phi_1 (v)) =J(E(v)).$$
The involution $I(v)=-v$ is $\mathcal M$-preserving on $TX_{reg}$. And the involution
$J\:X\times X\to X\times X$ given by $J(x,y)=(y,x)$ preserves $\mathcal H^{2\cdot n}$. Therefore, from \eqref{eq:almostall} we deduce
\begin{equation} \label{eq:final+}
|\mathcal H^{2\cdot n} (E(S^+)) - \mathcal M (\phi _1 (S^+))| < \delta{\cdot}r^{n+1}.
\end{equation}
On the other hand, by construction,
$$ \mathcal M (S\setminus S^+ )=0
\quad\text{and}\quad
\phi _1 (S\setminus S^+)\cap TX_{reg} =\emptyset.$$
Hence, applying \eqref{eq:almostall}, we see
\[
|\mathcal H^{2\cdot n} (E(S))- \mathcal H^{2\cdot n} (E(S^+))|< \delta{\cdot}r^{n+1}\]
and
\[\mathcal M (\phi _1 (S\setminus S^+)) =0.\]
Together with \eqref{eq:final+} this finishes the proof of \eqref{eq:finally}.

Coming back to our subsets $A_m\subset T^{r_m} K$, we have
\begin{align*}
\varepsilon \cdot r_m^ {n+1} &\leq \mathcal M ( A_m) - \mathcal M( \phi _1 (A_m)) \leq
\\
&\leq\mathcal M ( A_m) - \mathcal M( \phi _1 (A_m\cap D^{r_m})).
\end{align*}
Setting $S_m=A_m\cap D^{r_m}$ we estimate the right hand side as the sum of the following three terms:
\begin{align*}
&|\mathcal M (A_m) - \mathcal M(S_m)|,
\\
&|\mathcal M (S_m ) - \mathcal H^{2\cdot n} (E(S_m) )|,
\\
&| \mathcal H ^{2\cdot n} (E(S_m)) - \mathcal M (\phi _1(S_m)|.
\end{align*}
Applying \eqref{eq:almostall} and \eqref{eq:finally} this sum is bounded above by $4{\cdot}\delta{\cdot} r _m^{n+1}$, for all large $m$.

Therefore
\[\varepsilon \cdot r_m^ {n+1}< 4{\cdot} \delta{\cdot} r _m^{n+1}\]
for all large $m$.
Since $\delta$ is an arbitrary positive number, this leads to a contradiction.
\end{proof}

\subsection{Quasi-geodesics flow} \label{subsec:quasi}
Finally, we discuss some relations with quasi-geodesics, referring the reader to \cite{Petsemi} for the basic properties of such curves.
Recall, that whenever a unit speed minimizing geodesic $\gamma _v \:[0,a] \to X$ start at a point $x$ in the direction $v$ then this is the unique quasi-geodesic defined on the interval $[0,a]$, \cite{PP}, p.8, thus the same statement is also true for (local) geodesics $\gamma _v$.

Using this and the fact that a limit of quasi-geodesics is a quasi-geodesic, it is not difficult to conclude that the partial geodesic flow $\phi\:\mathcal F\cap TX_{reg} \to TX_{reg}$ defined above is continuous.
The latter statement slightly strengthening Lemma \ref{lem:measurability}.

As in Subsection \ref{subsec:tb}, we have a canonical measure $\mathcal M _1$ on the unit tangent bundle $\Sigma X \subset TX $ of $X$, which we also call the Liouville measure. Whenever
$X$ has the Liouville property, then the geodesic flow is defined $\mathcal M_1\otimes \mathcal H^1$-almost everywhere on $\Sigma X \times \R$ and preserves $\mathcal M_1$. In this case for $\mathcal M_1$-almost each unit direction there exists exactly one quasi-geodesic starting in this direction.

Let now $X$ be an Alexandrov space with topological boundary $\partial X$ and let $Z$ be the doubling $X\sqcup_{\partial X} X$, which
is an Alexandrov space without boundary, \cite{P2}.
Quasi-geodesics in $X$ are exactly the projections of the quasi-geodesics in $Z$ under
the folding $f\:Z\to X$.
From this we deduce that if $Z$ has the Liouville property, then $\mathcal M_1$-almost each direction $v\in \Sigma X$ is the starting direction of a unique infinite quasi-geodesic in $X$.
Moreover, in this case, the corresponding quasi-geodesic flow preserves $\mathcal M_1$.

Finally, as an application of \tref{thmmain} and \tref{alexandrovthm} we see that the above assumptions are fulfilled whenever the complement $X\setminus\partial X$ has vanishing  mm-boundary. Indeed, in this case the mm-boundary of $Z$ must be concentrated on $\partial X \subset Z$, hence it must be trivial by \tref{alexandrovthm},(3).

\section{Surfaces with bounded integral curvature in the sense of Alexandrov} \label{sec:surface}

\subsection{Preparations}
We assume that the reader is familiar with the theory of surfaces with bounded integral curvature; see \cite{AZ} and \cite{Reshetnyak-GeomIV}.

Let $X$ be a surface with bounded integral curvature;
it is a locally geodesic metric space, homeomorphic to a two-dimensional surface.
It has Hausdorff dimension $2$ and the Hausdorff measure $\mathcal H^2$ is a Radon measure on $X$. There is another signed Radon measure on $X$, the so called \emph{curvature measure} which will be denoted $\Omega$, \cite[Section 8]{Reshetnyak-GeomIV}.
We will not assume that $X$ is complete.

We will derive \tref{intsurface} as a consequence of the following weak local
version of a theorem of Mario Bonk and Urs Lang, \cite{Bonk-Lang}, which relate the curvature measure to the volume of balls.

\begin{lem} \label{lem:bl}
There exists some $\delta _0>0$ with the following property.

 Let $X$ be a surface with bounded integral curvature and let $\Omega\in \mathrm M(X)$ be its curvature measure.
Assume $X$ is homeomorphic to a plane and $| \Omega|(X) < \delta _0$.
Then for any point $x\in X$, and $r>0$ such that
$\bar B (x,{r})$ is compact we have
$$\left|1- \frac {b_r(x)} {\pi{\cdot}r^2} \right| \leq 3 \cdot |\Omega |( B (x,{r})).$$
\end{lem}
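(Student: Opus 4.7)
The plan is to reduce the estimate to the Gauss--Bonnet formula on metric balls, using the co-area formula to pass from boundary length to volume. Since $X$ is homeomorphic to a plane and $|\Omega|(X)$ is very small (pick $\delta_0 \ll 1$), the closed balls $\bar B(x,s)$, $0<s\le r$, will be topological disks, and the metric spheres $\partial B(x,s)$ will be simple closed rectifiable curves for almost every $s$. This topological control is the main reason one needs a smallness assumption $\delta_0$: without it, the cut locus or conical singularities of large negative curvature could destroy the disk structure of small balls.

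Granting this, the next step is to apply Gauss--Bonnet in the Reshetnyak--Alexandrov sense to the disk $D_s=\bar B(x,s)$. Writing $L(s)$ for $\mathcal H^1(\partial B(x,s))$ and $\tau_s$ for the (signed) turning measure of the boundary, Gauss--Bonnet reads
\begin{equation*}
\Omega(D_s)+\tau_s(\partial D_s)=2\pi.
\end{equation*}
The first-variation identity for the distance function on a BIC surface then gives, for a.e.\ $s\in(0,r)$,
\begin{equation*}
L'(s)=\tau_s(\partial D_s)=2\pi-\Omega(D_s).
\end{equation*}
Integrating from $0$ to $s$ and using $L(0)=0$ yields
\begin{equation*}
L(s)=2\pi s-\int_0^s \Omega(D_u)\,du.
\end{equation*}

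Now apply the co-area formula $b_r(x)=\int_0^r L(s)\,ds$ to obtain
\begin{equation*}
b_r(x)=\pi r^2-\int_0^r\!\!\int_0^s \Omega(D_u)\,du\,ds.
\end{equation*}
The double integral is bounded in absolute value by $\tfrac12 r^2\,|\Omega|(B(x,r))$, so after dividing by $\pi r^2$ one gets
\begin{equation*}
\left|1-\frac{b_r(x)}{\pi r^2}\right|\le \frac{1}{2\pi}\,|\Omega|(B(x,r)),
\end{equation*}
which is comfortably below the claimed constant $3$, with the slack absorbing any approximation errors from irregular values of $s$ (a set of measure zero in $s$ where $\partial B(x,s)$ is not a simple loop).

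The main obstacle I foresee is justifying the first-variation identity $L'(s)=2\pi-\Omega(D_s)$ on a surface of bounded integral curvature, where the metric spheres can have countably many corners and where $L(s)$ might fail to be locally absolutely continuous unless one controls the atoms of $\Omega$. The smallness hypothesis $|\Omega|(X)<\delta_0$ lets one rule out atoms of mass $\ge 2\pi$ (which would create a loss of disk topology) and keeps the set of ``bad'' radii where $\partial B(x,s)$ is not a simple closed curve small enough not to affect the co-area integration. Once these technical points are handled using the Reshetnyak polyhedral approximation of $X$ (where Gauss--Bonnet is elementary) and a limit argument, the chain of identities above gives the lemma.
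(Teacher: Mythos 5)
Your route is genuinely different from the paper's. The paper does not touch Gauss--Bonnet on metric balls at all: it reduces to polyhedral metrics, cuts out a polygonal disk containing $B(x,s)$, glues flat half-strips and sectors to extend it to a \emph{complete} polyhedral plane $\hat X$ with $|\hat\Omega|(\hat X)<3\delta$, and then invokes the Bonk--Lang theorem to get a global $(1+O(\delta))$-bi-Lipschitz map $\hat X\to\R^2$, from which the volume estimate follows via \eqref{eq:bilip}. Your co-area strategy, if it worked, would be more self-contained and would give a better constant ($\tfrac1{2\pi}$ instead of $3$), so it is worth examining where it breaks.

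The gap is the identity $L'(s)=\tau_s(\partial D_s)=2\pi-\Omega(D_s)$. You flag it as the main obstacle but then dismiss it as a matter of a measure-zero set of bad radii; that is not the problem. Even for a \emph{smooth} metric on the plane with arbitrarily small total curvature, this identity fails as soon as $s$ exceeds the cut radius of $x$: two minimizing geodesics from $x$ can meet (a geodesic biangle enclosing an arbitrarily small amount of positive curvature already produces a cut point), the level sets of the distance function collide there, and a portion of the sphere predicted by the first variation is destroyed. The correct general statement (Fiala, Hartman, Shiohama--Tanaka, and Burago--Zalgaller in the BIC setting) is only the one-sided bound $dL\le\bigl(2\pi-\Omega(B(x,s))\bigr)\,ds$ as measures, with an additional non-positive singular part carried by the cut locus. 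Integrating this gives you the \emph{upper} bound $b_r(x)\le\pi r^2+\tfrac12 r^2\,|\Omega|(B(x,r))$ essentially for free, but the lower bound on $b_r(x)$ --- which is the substantive half of the lemma --- requires showing quantitatively that the total length lost at the cut locus up to radius $r$ is controlled by $r\cdot\Omega^+(B(x,r))$ (or by $r\,\delta$). That is a genuine additional comparison argument, not an approximation technicality, and without it the chain of identities does not close. (Separately, the co-area identity $b_r(x)=\int_0^rL(s)\,ds$ and the absolute continuity of $L$ on a BIC surface also need justification via polyhedral approximation, but these are the minor points; the cut-locus deficit is the one that must be supplied.)
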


\begin{proof}

Set $\delta = |\Omega |( B (x,{r}))$. By continuity, it is sufficient to prove that $|1- \frac {b_s(x)} {\pi{\cdot}s^2}| \leq 3{\cdot}\delta $ for any $s<r$. Using approximations of
the metric on $X$ by polyhedral metrics \cite[Theorem 8.4.3, Theorem 8.1.9]{Reshetnyak-GeomIV}, we assume from now on that $X$ is polyhedral and homeomorphic to $\R^2$.

\emph{Claim:} There exists a complete polyhedral surface $\hat X$ homeomorphic to a plane, which contains a copy of $B (x,s)$ and such that
the curvature measure $\hat \Omega$ of $\hat X$ satisfies $|\hat \Omega | (\hat X) < 3 \cdot \delta$.

Once the claim is proven,
\cite{Bonk-Lang} provides us a bi-Lipschitz map $f\:\hat X\to \R^2$ with the constant $L\leq 1+\frac {3{\cdot}\delta} {2{\cdot}\pi -3{\cdot}\delta}$.
Since $\delta$ is small,  an application of \eqref{eq:bilip}  finishes the proof of the lemma.

It remains to prove the Claim,  certainly well-known to experts.
Take some $r>t>s$ and consider the compact metric ball $\bar B (x,t) \subset B (x,r)$.
We may assume that the boundary $S_t$ of
$\bar B (x,t)$ does not contain singular points of $X$.
By \cite[Theorem 9.1, Theorem 9.3] {Reshetnyak-GeomIV}, the boundary $S_t$ is a (piecewise smooth) Jordan curve, once $\delta _0< 2{\cdot}\pi$, and the negative part $\kappa ^-$ of the geodesic curvature $\kappa$ of $S_t$ satisfies $|\kappa ^-| (S_t) \leq \delta$.
Since $X$ is homeomorphic to a plane this implies that $\bar B (x,t)$ is homeomorphic to a closed disk $\bar D^2$ in $\R^2$.

We find a polygonal Jordan curve $\Gamma$ in $B(x,t)$ approximating $S_t$ such that the negative part of the geodesic curvature of $\Gamma$ is smaller than $2{\cdot}\delta$. Consider the closed Jordan domain $Y$ bounded by $\Gamma$, which can be assumed to contain $B(x,s)$.
Now we glue to $Y$ along any edge of $\Gamma$ a flat half-strip.
The boundary of the arising polyhedral surface consists of pairs of rays
$\gamma _i ^{\pm}$ emanating from the vertices $V_1,\dots,V_k$ of $\Gamma$. The rays $\gamma _i^{\pm}$ enclose an angle equal to $2{\cdot}\pi- \alpha _i$, where $\pi-\alpha _i$ is the angle of $\Gamma$ at $V_i$ measured in $Y$. In order to finish the construction of $\hat X$ we glue a flat sector of
angle $\alpha _i$ between $\gamma _i ^{\pm}$, if $\alpha _i >0$ and we glue $\gamma _i^{\pm}$ together if $\alpha _i \leq 0$.
Since $Y$ was a polyhedral disc, the arising space $\hat X$ is a complete polyhedral plane.
All of the singularities of $\hat X$ are contained in $B(x,s) \cup \{ V_1,\dots, V_k\}$. Moreover, by construction, the curvature measure
$\hat \Omega$ of $\hat X$ satisfies
$$\hat \Omega (V_i) = \min \{ 0, \alpha _i \}.$$
We deduce,
%\begin{align*}
$$|\hat \Omega | ( \hat X) = |\Omega (B (x,s))| + |\hat \Omega | ( \Gamma ) \leq \delta + |\kappa ^-| (\Gamma ) < 3{\cdot}\delta.$$

%%\end{align*}

This finishes the proof of the claim and of \lref{lem:bl}.
\end{proof}

\subsection{Local finiteness of mm-curvature}
Now we are ready to prove and prove the following generalization of \tref{intsurface}.

\begin{thm}\label{intsurface1}
Let $X$ be an Alexandrov surface with integral curvature bounds. Then, equipped with the Hausdorff measure $\mathcal H^2$, the space $X$ has locally finite mm-curvature.
\end{thm}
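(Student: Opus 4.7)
The aim is to bound $|\mathcal{V}_r|(K)/r^2$ uniformly in $r\in (0,1]$ for every compact $K\subset X$. My plan is to first establish the pointwise inequality
\[
|v_r(x)|\leq C_1\cdot |\Omega|(B(x,r))\qquad\text{for $\mathcal H^2$-a.e.\ $x\in K$,}
\]
valid for all sufficiently small $r$, with a constant $C_1$ depending only on $K$. Once this is available, I would integrate against $\mathcal H^2$ and apply \lref{lem:exchange} with $\mu=|\Omega|$ and $\nu=\mathcal H^2$ to obtain
\[
|\mathcal{V}_r|(K)\leq C_1\cdot\int_{B(K,r)}\mathcal H^2(B(x,r))\,d|\Omega|(x)\leq C_1\cdot C_2\cdot r^2\cdot |\Omega|(B(K,1)),
\]
where the uniform bound $\mathcal H^2(B(x,r))\leq C_2\cdot r^2$ on a compact neighbourhood of $K$ is another consequence of the upper half of \lref{lem:bl} after the local completion trick from its Claim, and where $|\Omega|(B(K,1))$ is finite by the Radon property. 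This would yield exactly the desired $O(r^2)$ estimate.

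The pointwise inequality is read off from \lref{lem:bl} with $C_1=3$ whenever $x$ has a surrounding topological disc $U\subset X$ satisfying $|\Omega|(U)<\delta_0$. Since $|\Omega|$ is Radon on a compact neighbourhood $\hat K$ of $K$, only finitely many atoms $p_1,\dots,p_N$ of $|\Omega|$ in $\hat K$ carry mass at least $\delta_0/10$. Using the limit $\lim_{r\to 0}|\Omega|(\overline{B(x,r)})=|\Omega|(\{x\})<\delta_0/10$ for $x\notin\{p_1,\dots,p_N\}$, together with upper semi-continuity of $x\mapsto |\Omega|(\overline{B(x,r)})$ and compactness, one extracts a uniform $\rho>0$ such that $|\Omega|(B(x,2\rho))<\delta_0$ for every $x\in K\setminus\bigcup_i B(p_i,\rho)$. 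On this part of $K$, regarding $B(x,2\rho)$ as the topological disc $U$ and extending it to a polyhedral plane via the Claim in the proof of \lref{lem:bl} yields the pointwise bound with $C_1=3$ for all $r<\rho$.

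The main obstacle is the contribution of the neighbourhoods $B(p_i,\rho)$ of the big atoms. Here \lref{lem:bl} cannot be invoked directly, because every ball touching $p_i$ carries curvature mass at least $|\Omega|(\{p_i\})\geq \delta_0/10$. The saving observation is that this lower bound is exactly what the right-hand side of the pointwise inequality already provides: as soon as $B(x,r)\ni p_i$ one has $|\Omega|(B(x,r))\geq \delta_0/10$, while a uniform bound $|v_r(x)|\leq M$ on $\hat K$ can be produced by a variant of the Claim of \lref{lem:bl} in which the atom at $p_i$ is first detached and replaced by a flat sector of the appropriate angle, yielding a polyhedral plane to which only the upper Bonk--Lang estimate for $b_r(x)$ is applied. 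Hence the pointwise inequality persists on $B(p_i,\rho)$ with the enlarged constant $10M/\delta_0$; for the remaining $x\in B(p_i,\rho)$ with $p_i\notin B(x,r)$ one falls back on the previous paragraph's argument inside the punctured disc $B(p_i,2\rho)\setminus\{p_i\}$. Collecting all cases would produce a single $C_1=\max(3,10M/\delta_0)$ depending only on $K$, and the first paragraph then completes the proof.
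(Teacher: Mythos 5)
Your overall architecture is the same as the paper's: isolate the finitely many heavy atoms of $|\Omega|$, use \lref{lem:bl} to get the pointwise bound $|v_r(x)|\le 3\cdot|\Omega|(B(x,r))$ away from them, integrate and swap the order of integration via \lref{lem:exchange}, and control the neighbourhood of the atoms by a uniform quadratic area bound (the paper quotes Reshetnyak's Lemma~8.1.1 for the bound $\mathcal H^2(B(x_0,3r))\le \tfrac1\varepsilon r^2$, which is the same ingredient as your constant $M$). The first two paragraphs of your argument, and your Case~A near an atom ($p_i\in B(x,r)$, so $|\Omega|(B(x,r))\ge\delta_0/10$ and $|v_r|\le M$), are sound modulo the justification of $M$.

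The genuine gap is your Case~B: for $x\in B(p_i,\rho)$ with $p_i\notin B(x,r)$ you cannot, in general, ``fall back on the previous paragraph's argument inside the punctured disc.'' If the atom is large and positive, say $\Omega(\{p_i\})=2\pi-\theta$ with $\theta<1$ (a thin cone point, which is allowed in this class of surfaces), then for $x$ with $r\le d(x,p_i)\lesssim r/\theta$ the metric ball $B(x,r)$ winds all the way around $p_i$ without containing it: on the model cone of angle $\theta$, the whole circle of radius $d(x,p_i)$ about the tip lies within distance $2\,d(x,p_i)\sin(\theta/4)<r$ of $x$. Such a ball admits no simply connected neighbourhood inside $B(p_i,2\rho)\setminus\{p_i\}$, so \lref{lem:bl} is inapplicable; worse, the pointwise inequality itself is false there, since on the exact cone $|\Omega|(B(x,r))=0$ while $b_r(x)\le 3\theta r^2$, so $|v_r(x)|\ge 1-3\theta/\pi$ is of order $1$. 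No choice of $C_1$ rescues a pointwise bound on this annular region. The repair is exactly the paper's device: abandon the pointwise inequality on a ball $B(p_i,c\cdot r)$ of radius \emph{proportional to $r$} (large enough, depending on the atom, to swallow the winding annulus) and bound its contribution to $|\mathcal{V}_r|$ directly by $\sup|v_r|\cdot\mathcal H^2(B(p_i,c\cdot r))=O(r^2)$ using the quadratic area bound, reserving \lref{lem:bl} for points at distance at least $c\cdot r$ from every heavy atom. With that modification your argument closes, and it then coincides with the proof in the text.
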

\begin{proof}
Let again $\Omega$ denote the curvature measure of $X$. Let $\delta _0>0$ be sufficiently small and satisfy the conclusion of \lref{lem:bl}.
The statement of \tref{intsurface1} is local, so we need to prove it only in a small neighborhood of any point.
Thus we may (and will) assume that there is a point $x_0\in X$
such that $| \Omega| (X \setminus \{ x_0 \}) < \delta _0$ and that $X$ is homeomorphic to a plane.

Let $A\subset X$ be  compact. Choose some $\varepsilon >0$ such that the closure of $B (A,{2{\cdot}\varepsilon})$ in $X$ is compact and such that,
for any $0<2{\cdot}r<\varepsilon$ the inequality $\mathcal H^2 (B(x_0,{3{\cdot}r})) < \tfrac 1 {\varepsilon}\cdot r^2$ holds true;  see \cite[Lemma 8.1.1]{Reshetnyak-GeomIV}.

Let $r<\varepsilon$ be arbitrary.
For any $x\in B (x_0,{2{\cdot}r})$ we have
%\begin{align*}
$$  b_r (x) =\mathcal H^2 (B (x,r)) \leq \mathcal H^2 (B (x_0,{3{\cdot}r})) \leq\tfrac 1 {\varepsilon}\cdot r^2. $$
%\end{align*}

For any $x\notin B (x_0,{r})$ we have $| \Omega |(B(x,r )) < \delta _0$. Thus, by Lemma~\ref{lem:bl}, $$|1-\frac {b_r(x)} {\pi{\cdot}r^2} |
\leq
3\cdot |\Omega | ( B (x,{r})).$$
For the deviation measures $\mathcal{V}_r$ from \eqref{eq:first} we estimate:
\begin{align*}
|\mathcal{V}_r| (A\cap B (x_0,{2{\cdot}r}))
&\leq |\mathcal{V}_r| (B (x_0,{2{\cdot}r})) \leq
\\
&\leq (1+ \tfrac 1 {\varepsilon}) \cdot \mathcal H^2 (B (x_0,{2{\cdot}r})) \leq
\\
&\leq
(1+ \tfrac 1 {\varepsilon}) \cdot \tfrac 1 {\varepsilon} \cdot r^2.
\end{align*}
On the other hand,
\begin{align*}
|\mathcal{V}_r| (A\setminus B (x_0,{2{\cdot}r}))
&\leq
\int _{A \setminus B (x_0,{2{\cdot}r}) } 3 \cdot |\Omega| (B (x,r)) \cdot d\mathcal H^2 (x)
\leq
\\
&\leq 3\cdot \int _{B (A,r)\setminus B (x_0,{2{\cdot}r}))} \mathcal H^2 (B(x,r)) \cdot d|\Omega | (x),
\end{align*}
where we have used \lref{lem:exchange} in the last step.
For any $x$ contained in the domain of integration of the last integral, we have $\mathcal H^2 (B(x,r)) =b_r (x) \leq 2{\cdot}\pi{\cdot}r^2$, by \lref{lem:bl}, once $\delta _0$ has been chosen to be sufficiently small.
We deduce $|\mathcal{V}_r| (A\setminus B (x_0,{2{\cdot}r})) \leq 6 {\cdot}\pi {\cdot}\delta _0 {\cdot} r^2$.

Thus, for some constant $C=C(\varepsilon)$ and all $r<\varepsilon$, we obtain
$$|\mathcal{V}_r|(A)  = |\mathcal{V}_r|  (A\setminus B (x_0,{2{\cdot}r})) + |\mathcal{V}_r| (A\cap B (x_0,{2{\cdot}r}))    \leq C\cdot r^2.$$
This finishes the proof of the theorem.
\end{proof}

\section{Convex hypersurface} \label{sec:hyper}
In this section we are going to prove \tref{thmfirst}.

The proof will follow from \tref{thmmain} by comparing the mm-boundary with the mean curvature measure on convex hypersurfaces.

It is possible to deduce the theorem without a reference to \tref{alexandrovthm}, from \lref{lem:mean} alone, but \tref{alexandrovthm} shortens the proof.

All results in this section are local,
but for simplicity, we consider only closed convex hypersurfaces.
The hypersurfaces will always be equipped with the induced intrinsic metric.

We assume that the reader is familiar with the basics of the theory of convex functions and convex geometry.

\subsection{Mean curvature}
Let $X$ be a convex hypersurface in $\R^{n+1}$. Recall that there exists a Radon measure $\mathcal K$ on $X$, called the mean curvature measure; see \cite{Schneider}, \cite{Fedcurvature}.

The measure $\mathcal K$ has the following properties.
For smooth hypersurfaces $X$, we have $\mathcal K=\kappa \cdot \mathcal H^n$, where $\kappa$ is the usual mean curvature function of $X$.
The mean curvature measure is stable under Hausdorff convergence of convex hypersurfaces in $\R^{n+1}$.
If the hypersurface is rescaled by $\lambda$, the mean curvature $\mathcal K$ is rescaled by $\lambda^{n-1}$. %???

A point $x$ in the convex hypersurface $X$ is called \emph{smooth} if there is a unique supporting hyperplane of $X$ at this point.
For any smooth point $x\in X$, any sequence $x_j\in X$ converging to $x$ and any sequence of positive numbers $t_j$ converging to $0$, the
sequence of convex hypersurfaces $X_j$ obtained from $X$ by the dilatation by the factor $\frac 1 {t_j}$ centered at the point $x_j$ converges
to the tangent hyperplane of $X$ at $x$.

The stability of the mean curvature measures $\mathcal K$, vanishing of $\mathcal K$ on flat hyperplanes and
the behavior of $\mathcal K$ under rescalings gives us:

\begin{lem} \label{lem:compsm}
Let $X$ be a convex hypersurface in $\R^{n+1}$. Let $A$ be a compact set of smooth points in $X$ and  $\delta >0$.
Then there exists some $t>0$ such that
\[\mathcal K(B(y,r)) \leq \delta \cdot r^{n-1}\]
for any $y\in B (A,{t})$ and any $0<r<t$.
\end{lem}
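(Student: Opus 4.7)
The plan is to argue by contradiction using a blow-up, combined with the three properties of $\mathcal K$ recalled just before the lemma: its stability under Hausdorff convergence of convex hypersurfaces, its $\lambda^{n-1}$-scaling under dilatation by $\lambda$, and its vanishing on flat hyperplanes.

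If the conclusion fails, I extract a sequence $t_j\to 0$ together with points $y_j \in B(A, t_j) \subset X$ and radii $r_j \in (0, t_j)$ such that
\[\mathcal K(B(y_j, r_j)) > \delta \cdot r_j^{n-1}.\]
After passing to a subsequence, $y_j$ converges to some $y_\infty \in A$, which is by hypothesis a smooth point of $X$; denote by $H \subset \R^{n+1}$ its tangent hyperplane.

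Next I dilate $X$ by the factor $\lambda_j := 1/r_j$ centered at $y_j$, obtaining convex hypersurfaces $X_j$. By the smooth-point blow-up property stated just before the lemma, $X_j$ converges to $H$ as $j \to \infty$. Writing $\mathcal K_j$ for the mean curvature measure of $X_j$, the dilatation carries $B(y_j, r_j)$ onto a subset $B_j \subset X_j$ of the closed unit ball around $y_j$, and by the rescaling property
\[\mathcal K_j(B_j) = \lambda_j^{n-1} \cdot \mathcal K(B(y_j, r_j)) > \delta.\]
Since $y_j \to y_\infty$, the sets $B_j$ all lie in a single compact set $K \subset \R^{n+1}$ for large $j$.

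To close the argument I invoke stability: the measures $\mathcal K_j$ converge weakly, as Radon measures on $\R^{n+1}$, to the mean curvature measure of the flat hyperplane $H$, which vanishes identically. Choosing $\varphi \in C_c(\R^{n+1})$ with $\varphi \equiv 1$ on $K$, weak convergence then gives
\[\mathcal K_j(B_j) \leq \int \varphi \, d\mathcal K_j \longrightarrow \int \varphi \, d(0) = 0,\]
contradicting the uniform lower bound $\mathcal K_j(B_j) > \delta > 0$. I expect the main technical care to lie in setting up the blow-ups, the sets $B_j$, and the notion of weak convergence compatibly, so that a $\limsup$-style bound can be passed through the \emph{moving} sets $B_j$; the uniform compact containment $B_j \subset K$ is precisely what makes this work.
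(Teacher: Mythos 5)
Your blow-up-by-contradiction argument is correct and is precisely the intended derivation: the paper states the lemma as an immediate consequence of the stability of $\mathcal K$ under Hausdorff convergence, its $\lambda^{n-1}$-scaling, its vanishing on hyperplanes, and the blow-up convergence at smooth points, without writing out the details you supply. The only (minor) points worth making explicit are that the test function should be taken with $0\leq\varphi\leq 1$ so that nonnegativity of $\mathcal K_j$ yields $\mathcal K_j(B_j)\leq\int\varphi\,d\mathcal K_j$, and that stability toward the non-compact limit $H$ is to be read as local weak convergence of measures, which is exactly what your compactly supported $\varphi$ uses.
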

Thus, the following lemma applies to all small balls in a neighborhood of any smooth point.

%The following lemma is basic:

\begin{lem} \label{lem:mean}
There exist numbers $\delta_0,C>0$ depending only on $n$ with the following property. Let $X$ be a convex hypersurface in $\R^{n+1}$.
Let $x\in X$ be a point and $r>0$ be such that the mean curvature $\mathcal K$ satisfies $\mathcal K(B (x,{6{\cdot}r})) < \delta \cdot r^{n-1}$ with $\delta <\delta _0$.
Then
\begin{equation} \label{eq:lemma}
\left| 1 - \frac {b_r (x)} { \omega _n{\cdot}r^n}\right|
<
C\cdot \delta \cdot \mathcal K( B (x,{6{\cdot}r})) \cdot r^{1-n}.
\end{equation}
\end{lem}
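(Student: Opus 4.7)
I would represent $X$ locally as a graph, translate the mean curvature hypothesis into pointwise and $L^2$ control of the gradient, and Taylor-expand the graph area formula. First, fix a supporting hyperplane $H$ of the convex body $K$ (with $X=\partial K$) at $x$, identify $H\cong\R^n$, and write $X$ near $x$ as the graph of a non-negative convex function $f\:D\to[0,\infty)$ with $f(x)=0$. In these coordinates, the mean curvature measure is the non-negative Radon measure $\mathcal{K}=\mathrm{div}\bigl(\nabla f/\sqrt{1+|\nabla f|^2}\bigr)\,d\mathcal{L}^n$, and the pointwise inequality $(D^2 f)[\nabla f,\nabla f]\le\Delta f\,|\nabla f|^2$ valid for convex $f$ yields
\[
\Delta f\le(1+|\nabla f|^2)^{3/2}\,\mathrm{div}\bigl(\nabla f/\sqrt{1+|\nabla f|^2}\bigr),
\]
whose integration gives $\int_{B_\rho}\Delta f\le C_n\,\mathcal{K}(B(x,6r))$ for $\rho\le 5r$, provided $\|\nabla f\|_\infty\le 1$.

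The key step is to upgrade this to the pointwise bound $\|\nabla f\|_{L^\infty(B_{r/2})}\le C_n\eta$, where $\eta:=\mathcal{K}(B(x,6r))\,r^{1-n}$. By convexity and $f(0)=0$, along every ray from $0$ the function $f(sv)/s$ is nondecreasing in $s$, so the spherical integral $F(s):=\int_{S^{n-1}} f(sv)\,d\sigma(v)$ satisfies $F(s)/s$ nondecreasing, i.e.\ $F(s)\le sF'(s)=s^{2-n}\int_{B_s}\Delta f$; combining with the above Laplacian control yields $F(2r)\le C_n r\eta$. A sub-harmonic mean-value argument (convex $f$ is sub-harmonic) then upgrades this integrated bound to $\max_{B_r} f\le C_n r\eta$, and the standard Lipschitz estimate $\|\nabla f\|_{L^\infty(B_{r/2})}\le 2\max_{B_r}f/r$ for convex functions gives the desired pointwise gradient bound.

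Integration by parts on $B_{\R^n}(x, r/2)$ with $f\ge 0$ and $\Delta f\ge 0$ then produces $\int_{B_{r/2}}|\nabla f|^2\,d\mathcal{L}^n\le \|f\|_{L^\infty(B_{r/2})}\int_{B_{r/2}}\Delta f\le C_n r^n\eta^2$. The pointwise bound also shows that the projection $(y,f(y))\mapsto y$ is $(1+C\eta^2)$-bi-Lipschitz between the intrinsic metric on $X$ and the flat metric on $H$, so the image $D_r\subset H$ of $B_X(x,r)$ satisfies $\bigl||D_r|-\omega_n r^n\bigr|\le C_n\omega_n r^n\eta^2$. Expanding the graph area formula
\[
b_r(x)=\int_{D_r}\!\sqrt{1+|\nabla f|^2}\,d\mathcal{L}^n = |D_r|+\tfrac12\!\int_{D_r}\!|\nabla f|^2\,d\mathcal{L}^n + O(r^n\eta^4)
\]
and combining the three estimates yields $|b_r(x)-\omega_n r^n|\le C_n r^n\eta^2$. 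Dividing by $\omega_n r^n$ and using $\eta<\delta$ gives $|v_r(x)|\le C_n\eta^2\le C_n\,\delta\cdot\mathcal{K}(B(x,6r))\,r^{1-n}$, as required.

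The main obstacle is the upgrade from the integrated Laplacian bound to a pointwise gradient bound. Standard elliptic regularity is not directly applicable when $\Delta f$ is merely a measure; one must combine convexity of $f$ (to obtain the monotonicity $F(s)/s\nearrow$ and thus $F\le sF'$) with a sub-harmonic mean-value argument to extract $L^\infty$-control of $f$ from $L^1$-control of $\Delta f$ on a slightly larger ball.
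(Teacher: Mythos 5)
Your proposal is correct in substance and follows the same architecture as the paper's proof: write $X$ near $x$ as the graph of a convex function $f$ over a (supporting/tangent) hyperplane, show that the integrated curvature hypothesis forces the pointwise bound $\|\nabla f\|_\infty\lesssim \eta:=\mathcal K(B(x,6r))\cdot r^{1-n}$ on a ball of radius comparable to $r$, and then exploit the fact that the area defect of a graph is \emph{quadratic} in $\nabla f$, so that $|v_r(x)|\lesssim \eta^2\le \delta\cdot\mathcal K(B(x,6r))\cdot r^{1-n}$. The only genuine divergence is in how the central gradient estimate is obtained. The paper rescales to $r=1$, reduces by convexity to the bound $\sup_{B(0,3)}|f|\le C\int_B|D^2f|$, and proves the latter by normalizing $\max f=1$ and running an elementary Fubini argument over segments orthogonal to a supporting hyperplane; the curvature enters only through the pointwise inequality $\kappa\ge\frac n2|D^2f|$. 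You instead put the mean curvature in divergence form, use the monotonicity of $s\mapsto F(s)/s$ for the spherical averages $F(s)=\int_{S^{n-1}}f(sv)\,d\sigma$ to convert $\int\Delta f$ into an $L^1$ bound on $f$, and then apply the sub-mean-value inequality; this is a clean and correct alternative (your chain $F(s)\le sF'(s)=s^{2-n}\int_{B_s}\Delta f$, then $\max_{B_r}f\le Cr\eta$, then the Lipschitz estimate for convex functions, all checks out, modulo routine bookkeeping of radii so that the gradient bound covers the whole projected ball $D_r$). One point you should not leave as a mere proviso: the a priori facts that $B(x,5r)$ is a graph over a Euclidean ball of radius $\sim 4r$ with $\|\nabla f\|_\infty\le 1$ cannot be extracted from your quantitative estimates alone (they are circular without it), and the paper obtains them from the stability of $\mathcal K$ under Hausdorff convergence together with a compactness argument showing that small total curvature forces the ball to be close to a flat disc; you need the same preliminary step. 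Your intermediate integration by parts giving $\int|\nabla f|^2\le Cr^n\eta^2$ is redundant once the pointwise bound is in hand, but harmless.
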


\begin{proof}
By rescaling, it suffices to prove the existence of $\delta _0, C>0$ such that the lemma holds for $r=1$.
By approximation, it is sufficient to prove the result for smooth convex hypersurfaces.

Fix a sufficiently small $\varepsilon _0>0$.
The mean curvature vanishes on $B (x,6)$ if an only if $B(x,6)$ is contained in a flat hyperplane.
Due to the stability of $\mathcal K$ under convergence, if $\delta_0$ is small, then
the ball $U=B (x,{5}) \subset X$ is close to a flat hyperplane in $\R^{n+1}$.
Thus, we may assume that the tangent hyperplanes to points in $U$ are $\varepsilon_0$-close to the tangent
space $W =T_xX \subset \R^{n+1}$. Therefore, $U$ is a graph $U=\{(x,f(x))\}$ of a convex function $f\:V\to \R$ defined on an open subset $V\subset W$. Moreover, $V$ contains the ball of radius $4$ in $W$ around $x$.
Denote by $B(x,2)_W$ the ball of radius $2$ in $W$ around $x$.
Set
$$a :=\sup \set{|\nabla f (y)|}{y\in B(x,2)_W}.$$
If $\delta_0$ is small, then $a<\varepsilon_0$.
The orthogonal projection $P\:U\to V$ is $1$-Lipschitz and the restriction of the inverse $P^{-1}$ to $B(x,2)_W$ has Lipschitz constant $$\sqrt {1+ a^2} \leq 1+a^2 \leq 1+\varepsilon_0 ^2.$$
Applying \eqref{eq:bilip} we only need to prove that $a < C \cdot \delta $ for a constant $C$.

Denote by $|D^2f|$ the largest eigenvalue of the Hessian $D^2f$.
Since $f$ is convex and $\varepsilon _0$ is small, the mean curvature $\mathcal \kappa (x)$ at the point $(x,f(x))$ of the graph $U$ of $f$ satisfies $\kappa (x) \geq \frac n 2 \cdot |D^2 f|$. Hence, the conclusion follows from the following statement.

\emph{Claim:} Let $f:B \to \R$ be a smooth convex function on the open ball $B=B (0,4)\subset \R^n$.
If $f(0)=|\nabla f(0)| =0$ then, for some $C=C(n)>0$,
$$\sup_{y\in B (0,2)} |\nabla f (y)| \leq C\cdot \int _{B} |D^2f|.$$

By convexity, it is sufficient to find some $C=C(n)>0$ with
\begin{equation} \label{eq:vitya}
\sup _{y\in B (0,3) }| f(y)| \leq C \cdot \int _{B} |D^2f|;
\end{equation}
see also \cite[Theorem 6.7]{Evans}.

First note that $f(z)\ge 0$ for all $z$ since  $f(0)=|\nabla f(0)| =0$ and $f$ is convex.

In order to verify \eqref{eq:vitya}, we can multiply the function $f$ by a constant and assume that
$f$ takes its maximum on the closed ball $\bar B (0,3)$ at the point $y_0$ and $f(y_0)=1$.
Convexity of $f$ implies that $|y_0|=3$.
Since $f(0)=0$ and $f$ is convex, we must have $f(y)\leq \frac 1 3$ for all $y\in B(0,1)$.

By convexity and the choice of $y_0$, the restriction of $f$ to the supporting hyperplane $H$ of $\bar B(0,3)$ at $y_0$ is bounded from below by $1$. Consider the ball $S$ of radius $\frac 1 2$ in $H$ around $y_0$. For any point $z\in S$ consider the restriction
$$f_z (t)= f(z- \tfrac t 3\cdot y_0 ),\quad t\in [0,6]$$
to the segment of length $6$ starting at $z$ orthogonal to $H$.
Then
$$f_z(0)\geq 1,\quad
f_z (3)\leq \frac 1 3,\quad
f_z (6) \geq 0.$$
Thus for some $t\in (0,3)$ we have $f_z'(t) \leq -\frac 2 9$ and for some $t\in (3,6)$ we have $f_z'(t) \geq - \frac 1 9$.
Therefore
$$\int _0 ^6 f_z'' (t)\cdot dt \geq \frac 1 9.$$
Integrating over $S$ we obtain by Fubini's theorem a uniform positive lower bound on $\int _{B} |D^2f|$.
This finishes the proof of \eqref{eq:vitya}.
Hence the claim and Lemma follow.
\end{proof}

\subsection{The proof} The next theorem is the first part of \tref{hypersurface};
the second part follows from \tref{intsurface1}.
In combination with \tref{thmmain} it also finishes the proof of \tref{thmfirst}. %???

\begin{thm} \label{thmconv}
Let $X$ be a convex hypersurface in $\R^{n+1}$.
Then it has vanishing  mm-boundary.

\end{thm}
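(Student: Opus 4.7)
The plan is to apply \tref{alexandrovthm} to reduce the problem to compact subsets of the smooth locus of $X$ and then exploit the quantitative comparison between the deviation function and the mean curvature furnished by \lref{lem:compsm} and \lref{lem:mean}. As a convex hypersurface, $X$ is a compact Alexandrov space of non-negative curvature with empty topological boundary, so \tref{alexandrovthm} applies. Bishop--Gromov yields $b_r(x)\leq\omega_n\cdot r^n$, so each $\mathcal{V}_r$ is a positive Radon measure and $\{\mathcal{V}_r/r\}_{r\leq 1}$ is uniformly bounded; by compactness it suffices to verify that every weak-$*$ accumulation measure $\nu=\lim_j\mathcal{V}_{s_j}/s_j$ (with $s_j\to 0$) is zero. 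Part~(\ref{n-1-nu}) of \tref{alexandrovthm}, together with $\sigma$-additivity, says that such a $\nu$ vanishes on every Borel set of $\sigma$-finite $\mathcal H^{n-1}$-measure. By a classical convex-geometric fact, the set $X\setminus X_{sm}$ of points of $X$ lacking a unique supporting hyperplane is contained in a countable union of Lipschitz $(n-1)$-submanifolds, hence is $\sigma$-finite in $\mathcal H^{n-1}$. Thus $\nu$ is concentrated on $X_{sm}$, and by inner regularity it suffices to show $\nu(A)=0$ for every compact $A\subset X_{sm}$.

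Fix such an $A$ and choose a relatively compact open $U$ with $A\subset U$ and $\overline U\subset X_{sm}$. For any $\delta\in(0,\delta_0)$, \lref{lem:compsm} applied to $\overline U$ (after rescaling $\delta$ by a factor $6^{n-1}$) provides $t_\delta>0$ with
\[
\mathcal K(B(y,6\cdot r))\leq\delta\cdot r^{n-1}\qquad\text{for all }y\in B(\overline U,1)\text{ and }r<t_\delta.
\]
For such $r$ and every $x\in U$, \lref{lem:mean} then gives $|v_r(x)|\leq C(n)\cdot\delta\cdot\mathcal K(B(x,6\cdot r))\cdot r^{1-n}$. Integrating over $U$ against $\mathcal H^n$, exchanging the order of integration via \lref{lem:exchange} applied to $\mu=\mathcal K$ and $\nu=\mathcal H^n$, and using the Bishop--Gromov bound $\mathcal H^n(B(y,6\cdot r))\leq\omega_n\cdot(6\cdot r)^n$, we arrive at
\[
\frac{\mathcal{V}_r(U)}{r}\leq C'(n,U)\cdot\delta\cdot\mathcal K(B(\overline U,1))
\]
uniformly for $r<t_\delta$.

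Next, pick $\phi\in C_c(X)$ with $0\leq\phi\leq 1$, $\phi\equiv 1$ on $A$, and $\phi\equiv 0$ outside $U$. Since each $\mathcal{V}_{s_j}/s_j$ is a positive measure supported in $X$ and dominated on the support of $\phi$ by its value on $U$,
\[
\nu(A)\leq\int\phi\,d\nu=\lim_{j\to\infty}\int\phi\,d\bigl(\mathcal{V}_{s_j}/s_j\bigr)\leq\limsup_{j\to\infty}\frac{\mathcal{V}_{s_j}(U)}{s_j}\leq C''(n,U)\cdot\delta.
\]
Since $\delta>0$ was arbitrary, $\nu(A)=0$, and combined with the reduction above this yields $\nu=0$.

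The delicate point is the coupling of the two limits $r\to 0$ and $\delta\to 0$: \lref{lem:mean} only applies at scales $r<t_\delta$ (and $t_\delta$ deteriorates as $\delta\to 0$), so the uniform bound $\mathcal{V}_r(U)/r\leq C''\cdot\delta$ must be set up \emph{before} testing against the weak limit. The role of part~(\ref{n-1-nu}) of \tref{alexandrovthm} is precisely to justify restricting the analysis to compact subsets of $X_{sm}$, where \lref{lem:compsm} then supplies the scale-dependent control of $\mathcal K$ needed to make the coupling of limits work.
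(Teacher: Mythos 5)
Your proposal is correct and follows essentially the same route as the paper: reduce via \tref{alexandrovthm} to showing that any weak limit $\nu$ vanishes on compact subsets of the smooth locus (the non-smooth points being $\sigma$-finite in $\mathcal H^{n-1}$), then combine \lref{lem:compsm}, \lref{lem:mean}, \lref{lem:exchange} and Bishop--Gromov to get $\mathcal{V}_r(U)/r \lesssim \delta$ for all $r$ below the $\delta$-dependent scale, and let $\delta \to 0$. The only differences are cosmetic (testing against a cutoff function rather than using $|\nu|(A)\le|\nu|(U)$ directly), and your remark about the order of the limits in $r$ and $\delta$ correctly identifies the one genuinely delicate point of the argument.
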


\begin{proof}
Since $X$ has locally finite mm-boundary by
\tref{alexandrovthm}, it suffices to prove that any partial limit measure $\nu$ of a sequence $\frac 1 {r_j}\cdot \mathcal{V}_{r_j}$ for $r_j\to 0$ must be the zero measure.

Fix a partial limit measure $\nu$.
Due to \tref{alexandrovthm}, $\nu (A)=0$ for any Borel subset $A\subset X$ with $\mathcal H^{n-1} (A)<\infty$.
Let $Y\subset X$ be the set of smooth points of $X$.
The complement $X\setminus Y$ is a countable union of subsets with finite $(n-1)$-dimensional Hausdorff measure (see \cite{zajicek} and \cite[Theorem 1.4]{Schneider}) therefore $\nu (X\setminus Y) =0$. Therefore, it is sufficient to prove $\nu (A)=0$
for any compact subset $A\subset Y$.

Fix a compact subset $A\subset Y$ and
let  $\delta>0$ be an arbitrary sufficiently small number. Consider a positive $1>t>0$ provided by \lref{lem:compsm}. Let $U$ be the open set  $B (A,t)$.
%Making $t$ smaller, if needed, we may assume that $\mathcal K(B(A,{7{\cdot}t}))$ is bounded by a
%fixed constant $C_1$ independent of $\delta$.

Assume $0<r<t$.
Applying \lref{lem:mean}, for $x\in U$ we get
\begin{align*}
|\mathcal{V}_r| (U)
&\leq \int _U C\cdot \delta \cdot r^{1-n}\cdot \mathcal K (B (y,{6{\cdot}r})) \cdot d\mathcal H^n (y) \leq
\\
&\leq C\cdot \delta \cdot r^{1-n} \cdot \int _{B (A,{7{\cdot}t})} \mathcal H^n (B (y,{6{\cdot}r})) \cdot d\mathcal K(y)
\leq
\\
&\leq C\cdot \delta \cdot r^{1-n} \cdot (6{\cdot}r) ^n \cdot \mathcal K(B (A,{7{\cdot}t}));
\end{align*}
we have used \lref{lem:exchange} in the second and Bishop--Gromov inequality in the last inequality. Hence
$$|\nu| (A) \leq |\nu| (U) \leq C\cdot \delta \cdot 6^n \cdot \mathcal K(B (A,7)). $$
Since $\delta$ can be chosen arbitrary small, we obtain $|\nu| (A)=0$.

This finishes the proof of the claim and, therefore, of \tref{thmconv}.
\end{proof}

%\subsection{Generalization}
%The following more general statement seems to follow in the same way. Maybe it should be better steted after the section about
%BV-metrics.

%\begin{defn}
%A $DC$-submanifold $N$ of a smooth submanifold $M$ is a subset that can be locally around each point represented as a graph
%$N=(x,f(x))$ for a $DC$-map $f\:\R ^m \to \R ^n$.
%\end{defn}

%Main examples are $C^{1,1}$ submanifolds and convex hypersurfaces in Riemannian manifolds.
%Using the theorem of Nash, one can (isometrically) consider any $DC$-submanifold of a Riemannian manifold
%as a $DC$-submanifold of a euclidean space.

% It seems to me that everything explained above workes well in this more general situation and one obtains:

% \begin{prop}
% Let $N$ be a $DC$-submanifold of a smooth Riemannian manifold. Then $N$ has trivial mm-boundary.
% \end{prop}

\section{An integral inequality for Riemannian metrics}\label{sec-BV-estimate}
\subsection{The smooth case} We start by estimating from above the deviation measure $\mathcal{V}_r$ on a smooth Riemannian manifold in terms of the first
derivatives of the metric. We do not know how to prove a similar estimate from below, see Problem \ref{qe:BV}. However, for the applications
to Alexandrov spaces discussed in the next section, the estimate from below is a consequence of the theorem of Bishop--Gromov.

For a smooth Riemannian metric $g$ defined on an open subset $U\subset \R^n$ we denote by $|g'|\:U\to [0,\infty)$ the sum $\Sigma_{i,j,k}|\frac \partial {\partial \,x_k} g_{ij}|$.

%of the absolute values of the first partial derivatives $|\frac \partial {\partial \,x_k} g_{ij}|$ of the coordinates of $g$.

\begin{prop}\label{prop-smooth}
There exists a constant $C=C(n)>1$ with the following property.
Let $U\subset \R^n$ be an open subset with a smooth Riemannian
metric $g$ which is $(1+\frac 1 C)$-bi-Lipschitz to the background Euclidean metric.
%t $g_0$ denote the flat background metric and assume $\|g_x-g_0\| \leq \varepsilon$ for some fixed small
%$\varepsilon$.
Let $A\subset U$ be a Borel subset. Let $r>0$ be such that $B (A,{2{\cdot}r})$ is relatively compact in $U$.
Then
$$\mathcal{V}_r (A) \leq C \cdot r \cdot \int _{B ( A,{2{\cdot}r})} |g' |.$$
\end{prop}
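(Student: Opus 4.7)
The plan is to compare the metric $g$ pointwise with the ``frozen'' constant metric $\bar g_x(y) := g(x)$ at each $x\in A$, exploit that any constant metric has vanishing deviation function $v_r\equiv 0$, and then convert the resulting pointwise differences into integrals of $|g'|$ via the fundamental theorem of calculus and Fubini. The key observation is that after a linear change of coordinates diagonalizing $g(x)$, the metric $\bar g_x$ is Euclidean, so the $\bar g_x$-ball $E_r(x):=\{y:(y-x)^\top g(x)(y-x)<r^2\}$ has $\bar g_x$-volume exactly $\omega_n r^n$. This yields the identity
\[
\omega_n r^n - b_r(x) \;=\; \int_{E_r(x)}\sqrt{\det g(x)}\,dy \;-\; \int_{B(x,r)}\sqrt{\det g(y)}\,dy,
\]
so the defect is governed entirely by the variation of $g$ near $x$.

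I would then split the right-hand side into a bulk contribution on $E_r(x)\cap B(x,r)$, controlled pointwise by $|\sqrt{\det g(y)}-\sqrt{\det g(x)}|\le C|g(y)-g(x)|$, and a symmetric-difference contribution on $E_r(x)\triangle B(x,r)$. For the latter, the elementary length inequality
\[
\bigl|L_g(\gamma)-L_{\bar g_x}(\gamma)\bigr| \;\le\; C\int_\gamma |g(z)-g(x)|\,d\mathcal H^1(z),
\]
applied to the Euclidean segment $\gamma=[x,y]$, gives $|d_g(x,y)-|y-x|_{g(x)}|\le C\int_{[x,y]}|g(z)-g(x)|\,d\mathcal H^1(z)$; a coarea argument on the level sets of $|\cdot -x|_{g(x)}$ then bounds the Lebesgue measure of the symmetric difference by a similar integral. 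The bi-Lipschitz hypothesis keeps $\det g$ bounded away from $0$ and replaces $g$-balls by Euclidean balls of comparable radius, so that both contributions combine to the pointwise estimate
\[
\omega_n r^n - b_r(x) \;\le\; C\int_{B(x,Cr)} |g(y)-g(x)|\,dy.
\]

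Finally, I would invoke the mean-value bound $|g(y)-g(x)|\le |y-x|\int_0^1|g'|((1-s)x+sy)\,ds$ and integrate the pointwise estimate over $x\in A$. Exchanging the order of integration, for each fixed $s\in(0,1]$ the substitution $z=(1-s)x+sy$ forces $z\in B(A,2r)$ under the constraints $x\in A$ and $|y-x|\le Cr$, and standard Fubini bookkeeping (the integral over $x\in A$ of $\mu(A\cap B(z,Csr))$ is $\lesssim r^n s^n$, uniformly in $s$) telescopes to
\[
\int_A (\omega_n r^n - b_r(x))\,d\mu(x) \;\le\; C r^{n+1}\int_{B(A,2r)} |g'|,
\]
which upon division by $\omega_n r^n$ gives the proposition.

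The principal obstacle is the symmetric-difference contribution: a crude estimate produces a supremum $\sup_{B(x,Cr)}|g-g(x)|$, which does not integrate against $d\mu(x)$ to yield $\int|g'|$. The resolution is to keep the quantity $|d_g(x,\cdot)-|\cdot-x|_{g(x)}|$ \emph{inside} a coarea integral against the natural $(n-1)$-dimensional surface density on the level sets of $|\cdot -x|_{g(x)}$, so that every occurrence of $|g(y)-g(x)|$ can ultimately be traded, via the mean-value inequality and Fubini, for an integral of $|g'|$ over $B(A,2r)$ with the correct power of $r$.
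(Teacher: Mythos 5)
Your strategy coincides with the paper's: freeze the metric at $x$, compare the metric ball with the ball of the frozen metric $g(x)$, split the defect $\omega_n r^n-b_r(x)$ into a ball-shape term and a density term, and finish by integrating over $A$ and exchanging the order of integration. The only real difference is one of implementation: you carry the finite differences $|g(y)-g(x)|$ through to the very end and convert them to $\int|g'|$ by the fundamental theorem of calculus along segments plus Fubini, whereas the paper works with derivative quantities from the start (the radial quantity $K(y)\le C\cdot|g'(y)|$ for the shape term, and the Riesz-potential inequality of Evans--Gariepy for $|u(y)-u(x)|$ with $u=\sqrt{\det g}$), arriving at the genuinely pointwise bound $\omega_n r^n-b_r(x)\le C\cdot r^n\int_{|y-x|<\frac43 r}|g'(y)|\cdot|y-x|^{1-n}\,dy$ before integrating. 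Your route is more elementary but yields only the integrated conclusion, which is all the proposition asks for. Two points need care. First, your displayed intermediate estimate $\omega_n r^n-b_r(x)\le C\int_{B(x,Cr)}|g(y)-g(x)|\,dy$ is false as a pointwise statement: if $g-g(x)$ is of size $\delta$ and concentrated in an annulus of radius $\varepsilon\ll r$ about $x$, the shape defect is of order $r^{n-1}\cdot\delta\cdot\varepsilon$ while the plain integral is only of order $\delta\cdot\varepsilon^n$. The correct pointwise bound must retain the weight $r^{n-1}\cdot|y-x|^{1-n}$ coming from polar coordinates --- exactly the point your last paragraph makes --- and with that weight the Fubini step still produces $r^{n+1}\int_{B(A,2r)}|g'|$, since $\int_{|w|<Cr}|w|^{2-n}\,dw\sim r^2$. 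Second, the straight-segment comparison gives only the one-sided inequality $d_g(x,y)\le|y-x|_{g(x)}+C\int_{[x,y]}|g(z)-g(x)|\,d\mathcal H^1(z)$, not the two-sided bound you state, because $d_g$ is an infimum over all competing paths. This is harmless: in the decomposition the contribution of $B(x,r)\setminus E_r(x)$ enters with a favorable sign, so only $E_r(x)\setminus B(x,r)$ must be controlled, and for that set only the upper bound on $d_g$ is needed --- which is also precisely how the paper argues.
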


\begin{proof}
We will denote by $C_i$ various (explicit) constants which depend only on $n$.

We will use the following notations. By $|\cdot|$ and $\mathcal L^n$ we denote respectively the norm and the Lebesgue measure on the background $\R^n$.
For $x\in U$ we denote by $g_x$ the Riemannian tensor at the point $x$ and
by $|\cdot |_x$ the corresponding norm. % $\R^n$ will be considered with the Euclidean metric coming from $g_0$
% and the corresponding Lebesgue measure $\mathcal L^n$.
The Hausdorff measure of the Riemannian metric
$g$ has the form $u\cdot \mathcal L^n$, with $u=\sqrt {det (g_{ij})}$.

For $x\in \R^n$, we consider the function $K \:U\to [0,\infty)$ given by
$$K (x)=\sup _{|v|_x =1}\, \frac d {dt}\Big |_{t=0} |v| _{x+tv}.$$
By smoothness of the determinant and the square root,
we find a constant $C_1$ such that for all $x\in U$ we have
\begin{equation} \label{eq:Kg}
|u' (x)| \leq C_1\cdot |g' (x)|
\quad\text{and}\quad
K (x) \leq C_1 \cdot |g' (x)|.
\end{equation}
We fix $A\subset X$ and $r>0$ as in the formulation of the proposition.
For $x\in U$ denote by $B_x$
the metric ball $B(x,r)$ in $U$.
By $B^x$ we denote the metric ball of radius $r$ in the Euclidean norm $|\cdot |_x$. In this Euclidean metric the ball $B^x$ has measure
$$\omega _n\cdot r^n =u(x)\cdot \int _{B^x} d\mathcal L^n.$$
Thus, in order to estimate the deviation measure $\mathcal{V}_r$, we only need to control the summands on the right of the following inequality:
\begin{equation} \label{eq:summand}
\omega_n \cdot r^n - b_r (x) \leq u(x) \cdot \mathcal L^n( B^x\setminus B_x) + \int _{B_x} |u(x)- u(y)| \cdot
d\mathcal L^n (y).
\end{equation}

We may assume that the bi-Lipschitz constant $1+\frac 1 C$ is close to 1, so that $\frac 1 2 < u <2$. Moreover, we may assume $B_x$ and $B^x$ are contained in the ball of radius $\frac 4 3 r$ around $x$ with respect to the background Euclidean metric.

In order to bound the first summand, for $x\in A$ and $|v|_x=1$, we set $l^x_v$ to be the length of the segment $[x,x+v]$
in the Riemannian metric~$g$. Then we compute
\begin{align*}
l_v ^x -r
&=
\int _0 ^r |v|_{x+tv} \, dt - \int _0 ^r |v|_x \, dt
\leq
\\
&\leq \int _0 ^r (\int _0 ^t K(x+sv) \, ds)\, dt
\leq
\\
&\leq \int _0 ^r (\int _0 ^r K(x+sv) \, ds)\, dt
=
\\
&= r\cdot \int _0 ^r K(x+sv) \, ds.
\end{align*}
Observe now that the intersection of $B^x\setminus B_x$ with the ray starting in $x$ in the direction of $v$ has $\mathcal H^1$-measure
( with respect to the norm $|\cdot |_x$)
at most $2\cdot (l_v^x -r)$, once the bi-Lipschitz constant $(1+\frac 1 C)$ is close to $1$.
Integrating in polar coordinates over the ball $B^x\subset (\R^n, |\cdot|_x)$ we infer:
\begin{align*}
u(x)\cdot \mathcal L^n (B^x\setminus B_x)
&\leq r^{n-1} \cdot \int _{v\in S^{n-1} _x}
\Big( 2\cdot r \cdot \int _0 ^r K(x+sv) \, ds \Big)
\cdot d\mathcal H^{n-1}  =
\\&=2{\cdot}r^n \cdot \int _{B^x} K(y) \cdot |y-x|^{n-1} \cdot u(x) \cdot d\mathcal L^n,
\end{align*}
where $S^{n-1} _x$ is the unit sphere in $(\R^n, |\cdot|_x)$.

To get a similar estimate of the other summand in \eqref{eq:summand},
we only need to recall the following inequality from \cite[Lemma 4.1]{Evans}, valid for any $\mathcal C^1$ function $u$ on a Euclidean ball
\[
\int _{|x-y| < r} |u(y)-u(x)| \cdot d\mathcal L^n (y) \leq C_2\cdot r^{n}\cdot \int _{|x-y| < r}|u'(y)| \cdot |y-x| ^{1-n} \cdot d\mathcal L^n (y).
\]
Taking both estimates together with \eqref{eq:Kg}, embedding $B_x$ and $B^x$ in slightly larger Euclidean balls and using that $\frac 1 2 < u <2$, we conclude:

\[
\omega _n \cdot r^n - b_r (x) \leq C_3\cdot r^n \cdot \int _{|x-y|< \frac 4 3 r} |g'(y)| \cdot |y-x|^{1-n} \cdot d\mathcal L^n.
\]

We divide both sides by $\omega _n \cdot r^n$ and integrate over $A$. Using that the bi-Lipschitz constant is close to $1$, we see:
\begin{align*}
\mathcal{V}_r (A) &\leq C_4\cdot \int _A \Big (\int _{|x-y|< \frac 4 3 r} |g'(y)| \cdot |y-x|^{1-n}\cdot d\mathcal L^n (y) \Big ) \cdot d\mathcal L^n (x) \leq
\\
&\leq C_4\cdot \int _{B (A,{2{\cdot}r})} \Big ( \int _{|x-y|< \frac 4 3 r} |g'(y)| \cdot |y-x|^{1-n} \cdot d\mathcal L^n (x) \Big ) \cdot d\mathcal L^n (y) =
\\
&= \frac 4 3\cdot  C_4\cdot \int _{B (A,{2{\cdot}r})} |g'(y)| \cdot r \cdot d\mathcal L^n (y),
\end{align*}
where we have used \lref{lem:exchange} in the second inequality.
This finishes the proof of Proposition~\ref{prop-smooth}.
\end{proof}

%\begin{rem}
%Estimating $\mathcal{V}_r (A)$ from below seems to be more difficult. We do not need a lower estimate in the general case in this paper because in our applications $\mathcal{V}_r$ is nonnegative due to Bishop--Gromov volume comparison. \footnote{ {V.:\color{red} do we need to go into this?} A.: No it is not needed. Should we mention the problem somewhere or just delete it? }
%\end{rem}

\subsection{Functions of bounded variations}
%before proceeding let us briefly recall some known facts about $DC$ functions and $DC$ maps.
Let $U$ be an open subset of $\R^n$. A function $f\in L^1 (U)$ is of class BV (bounded variation) if its first partial derivatives,
$\frac{\partial f}{\partial x^i}$ (here and below always in the sense of distributions) are signed Radon measures with finite mass
$|\frac{\partial f}{\partial x^i}|(U)$. We denote by $[Df ]$ the Radon measure $\sum_i |\frac{\partial f}{\partial x^i}|$ on $U$.
If $f\:U\to \R$ is a BV function, which is continuous on a subset $R \subset U$ with $\mathcal H^{n-1} (U\setminus R)=0$ then the Radon measure $[Df]$ vanishes on all Borel subsets $A\subset U$ with $\mathcal H^{n-1} (A)<\infty$, \cite{Goffmann}.

Let $f\:U\to \R$ be of class BV. Then for $\mathcal H^n$-almost every point
$x\in U$ there exists an affine function $\hat f _x\:\R^n\to \R$, such that for the BV function $h_x=f-\hat f_x$ we have
\begin{equation}\label{DC-ae-diff}
\lim_{r\to 0}\frac 1 {r^{n+1}}\cdot \int _{B(x,r)}|h_x| =0
\quad\text{and}\quad
\lim _{r\to 0} \frac 1 {r^n} \cdot [Dh_x] (B (x,r)) =0;
\end{equation}
see \cite[ Theorem 6.1 (2),(3)]{Evans} for the second and the H\"older inequality and \cite[ Theorem 6.1 (1)]{Evans} for the first inequality.
%A function $f\co U\to \R$ is called $DC$ if it can be locally represented as $h-g$ where $h$ and $g$ are semiconcave.
%We refer to \cite{Per-DC} and \cite{Shioya} for more details on the results about DC-functions used below.

%A map $f=(f_1,\ldots, f_m)\co U\to R^m$ is called $DC$ if each of its coordinates is $DC$.
%$DC$ functions form an algebra and a composition of two $DC$ maps is $DC$. If $f,g$ are $DC$ functions and $g$ is never $0$ on $U$ then $\frac f g$ is $DC$.
%Partial derivatives of $DC$ functions, here and below always in the sense of distributions, are functions of bounded variations, abbreviated as $BV$-functions below.
%Any partial derivative of a function of bounded variation is a signed Radon measure. Therefore (distributional) second partial derivatives of $DC$ functions are signed Radon measures.

%Just like semiconcave functions, $DC$ functions have second differential a.e. and their partial derivatives which exist a.e. are differentiable a.e.

\subsection{Almost Riemannian metric spaces}
The following definition provides a suitable description of a large part of any Alexandrov space, see Section~\ref{sec:Alex}.

Let $C =C(n)$ be the constant determined in Proposition \ref{prop-smooth}.
We will call a locally geodesic metric space
$X$ an \emph{almost Riemannian metric space} if it has the following properties
(see \cite{AB15} for a careful discussion
of such $DC_0$-Riemannian manifolds in the language of \cite{AB15} and \cite{Per-DC}):
\begin{enumerate}
%\item $X$ is an $n$-dimensional Lipschitz manifold.
\item There is a Borel subset $R\subset X$, called the subset of \emph{regular points} with $\mathcal H^{n-1} (X\setminus R)=0$.
\item Any minimizing geodesic $\gamma$ in $X$ can be approximated by curves $\gamma _i$ in $R$, such that the lengths of $\gamma _i$ converge to the length of $\gamma$.
\item For any $x\in X$, there is a neighborhood $U$ of $x$, called a \emph{regular chart}, and a bi-Lipschitz map
$\phi\:U\to O$ onto an open subset $O\subset \R^n$, with the bi-Lipschitz constant less than $(1+\frac 1 C)$.
\item There is a continuous Riemannian tensor $g_{ij} $ on $\phi (U\cap R)$ such that $g_{ij}$ is a function of bounded variation on $O$
for each $1\leq i,j \leq n$.
\item The length of any curve $\gamma \subset R$ can be computed as the length of $\phi (\gamma )$ via this Riemannian tensor $g$.
% \item The metric on $X$ can be reconstructed from the restriction to $R$. \footnote{I do not now how to express this in a good way.}.
\end{enumerate}

For any regular chart $U$ as above, we set $\mathcal N_0 (U)$ to be the Radon measure $[g']$ on $U$ given as the sum of
the Radon measures $[D g_{ij}]$ over the coordinates $ g_{ij}$ of the metric tensor $g$.
For an almost Riemannian metric space $X$, we define an outer measure $\mathcal N$ on $X$ in the following way.
For a subset $A\subset X$, we consider all coverings $A\subset \cup U_i$ by countably many regular charts $U_i$ and let $\mathcal N(A)$
to be the infimum of the sums $\sum_i \mathcal N_0 (U_i)$ over all such coverings. This is indeed an outer measure, which takes finite values on compact subsets. Since $\mathcal N$ satisfies the Caratheodory criterion, \cite[Theorem 1.9]{Evans}, it is indeed a Radon measure. We will call $\mathcal N$ the \emph{minimal metric derivative measure} on the almost Riemannian metric space $X$.

%We observe:
\begin{lem} \label{lem:minderiv}
Let $X^n$ be a almost Riemannian metric space and let $\mathcal N$ be its minimal metric derivative measure.
Then $\mathcal N (A)=0$ for any Borel subset $A\subset X$ with $\mathcal H^{n-1} (A)<\infty$.
There exists a Borel subset $C\subset X$ of full $\mathcal H^n$-measure in $X$ with
$\mathcal N(C)=0$, thus $\mathcal N$ is absolutely singular with respect to $\mathcal H^n$.
\end{lem}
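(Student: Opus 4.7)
\emph{Plan for the proof of Lemma \ref{lem:minderiv}.} My plan is to prove the two statements separately, localizing in a regular chart and invoking properties of BV functions on subsets of $\R^n$, then gluing via a cover of $X$.

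For the first statement, let $A\subset X$ be Borel with $\mathcal H^{n-1}(A)<\infty$ and fix $\varepsilon>0$. Since $X$ is separable, $A$ is contained in a countable union of regular charts $\phi_i\:U_i\to O_i$. In each chart each $g_{ij}$ is BV on $O_i$ and continuous on $\phi_i(U_i\cap R)$, whose complement has $\mathcal H^{n-1}$-measure zero; by the Goffman theorem quoted above, $[Dg_{ij}](\phi_i(A\cap U_i))=0$, so the Radon measure $\mathcal N_0(U_i)$ vanishes on $A\cap U_i$. Outer regularity then gives open sets $W_i\subset U_i$ containing $A\cap U_i$ with $\mathcal N_0(U_i)(W_i)<\varepsilon\cdot 2^{-i}$; each $W_i$ remains a regular chart, the $W_i$ still cover $A$, and the total $\mathcal N_0$-mass is below $\varepsilon$, so $\mathcal N(A)\leq \varepsilon$ and the first claim follows.

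For the singularity statement the idea is to find, at $\mathcal H^n$-a.e. point $x_0\in X$, regular charts around $x_0$ whose total $\mathcal N_0$-mass is $o(r^n)$ on balls of radius $r$. In a fixed regular chart $\phi$ the obstruction is that the gradient $\nabla g_{ij}(y_0)$ at $y_0=\phi(x_0)$ is generically nonzero, so $[Dg_{ij}](B(y_0,r))$ is of order $r^n$ rather than $o(r^n)$. To overcome this I exploit the fact that $\mathcal N$ is defined as an infimum over arbitrary cover by regular charts: I compose $\phi$ with a quadratic diffeomorphism $\psi$ centered at $y_0$, of the form $\psi(y)=y+\tfrac12 A_{y_0}(y-y_0,y-y_0)$, where the symmetric $A_{y_0}$ is determined by the standard normal-coordinate equation so that the transformed Riemannian tensor $g'_{ij}$ satisfies $\nabla g'_{ij}(y_0)=0$. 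On a small enough neighborhood $\psi$ is bi-Lipschitz with constant arbitrarily close to $1$, so $\psi\circ\phi$ remains a valid regular chart and $g'_{ij}$ inherits the BV class. With $\nabla g'_{ij}(y_0)=0$ the affine approximation of $g'_{ij}$ at $y_0$ in \eqref{DC-ae-diff} reduces to a constant, so $[Dg'_{ij}](B(y_0,r))=o(r^n)$ and hence $\mathcal N_0(B(y_0,r))=o(r^n)$ in the new chart.

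I conclude by a Vitali covering argument. For a compact $K\subset X$ and $\varepsilon>0$, the charts built above form a fine cover of a full-$\mathcal H^n$-measure subset of $K$; since $X$ is locally bi-Lipschitz to $\R^n$, a standard Vitali theorem yields a disjoint family of balls $B(x_i,r_i)$ sitting inside such charts and covering $\mathcal H^n$-almost all of $K$. Disjointness and the bi-Lipschitz volume comparison give $\sum r_i^n\leq C\cdot\mathcal H^n(K_1)$ on a slightly enlarged compact $K_1$, so the total $\mathcal N_0$-mass of the covering family is at most $C'\cdot\varepsilon\cdot\mathcal H^n(K_1)$. Letting $\varepsilon\to 0$ along a sequence produces a Borel subset of $K$ of full $\mathcal H^n$-measure on which $\mathcal N$ vanishes; a countable compact exhaustion of $X$ and intersection provide the set $C$. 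The main obstacle will be the normal-coordinate step: verifying that a quadratic change of variables actually kills the affine part of the BV tensor $g_{ij}$ at $\mathcal H^n$-a.e.\ point, while keeping the bi-Lipschitz bound and preserving the BV class of the transformed tensor.
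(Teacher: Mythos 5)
Your plan follows the paper's proof essentially verbatim: part one via Goffman's theorem together with the continuity of $g$ on $R$ in each chart, and part two by producing, at $\mathcal H^n$-almost every point, a new regular chart adapted to the affine approximation of the BV tensor in which the derivative measure has zero $n$-density, followed by a standard density/covering argument from \cite[Section 1.6]{Evans}. The only cosmetic difference is that the paper uses the exponential map of the smooth approximating tensor $\hat g_x$ where you use its second-order Taylor polynomial (the quadratic normal-coordinate map); the ``main obstacle'' you identify is handled there precisely by splitting $g=\hat g_x+u$ and applying both density conditions of \eqref{DC-ae-diff} to the error term $u$ under the $C^2$ change of coordinates.
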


\begin{proof}
Clearly, both claims are local. Hence we need to verify them only in a regular chart $U$, which we identify with its image $\phi (U) \subset \R^n$. The first statement
follows directly from the continuity of the metric tensor $g$ on the subset $U\cap R$ and the result of \cite{Goffmann}
cited above.

In order to verify the second claim we only need to show the following statement;
see also \cite[Section 1.6]{Evans}.
For almost all $x\in U$ there is another
regular chart $x\in V$, such that the derivative measure $[h']$ of the Riemannian tensor $h$ in this chart $V$, has $n$-dimensional density $0$
at $x$, thus
\begin{equation} \label{eq:deriv}
\lim _{r\to 0} \frac 1 {r^n} \cdot {[h'] (B (x,r))} =0 .
\end{equation}
Here and below, the ball $B (x,r)$ over which we integrate can be equally considered with respect to the Euclidean or to the original metric on $U$, since both are bi-Lipschitz equivalent.
In order to prove \eqref{eq:deriv}, we follow \cite[Section 4.2]{Per-DC} and consider the Riemannian tensor $g$ of the original chart $U$. Applying ~\eqref{DC-ae-diff} to the coordinates of $g$, we find for $\mathcal H^n$-almost all
$x\in U$ a smooth symmetric $2$-tensor $\hat g =\hat g_x$ on $U$ such that for
$u=g-\hat g$ we have:
\begin{equation} \label{eq:u}
\lim_{r\to 0}\frac 1 {r^{n+1}}\cdot \int _{B(x,r)} \|u\| =0
\quad\text{and}\quad
\lim _{r\to 0} \frac 1 {r^n} \cdot [Du] (B (x,r)) =0  .
\end{equation}
The first statement implies that $\hat g$ is indeed a Riemannian metric in a neighborhood $U_0$ of $x$.

Fix such a point $x$, neighborhood $U_0$ and $\hat g$.
Consider a small neighborhood $W$ of $0$ in $\R^n$ and let $\xi \:W\to U$ be the exponential map with respect to the metric
$\hat g$. Then $\xi (0)=x$, $D\xi (0) =Id$ and the pull-back Riemannian metric $\hat h= \xi^{\ast} (\hat g)$ has zero derivative at $0$. Since $D\xi$ is the identity, the bi-Lipschitz constant of the restriction $F=\xi ^{-1} \circ \phi$ to a sufficiently
small neighborhood $V$ of the point $x$ is still less than $(1 + \frac 1 C)$.
Hence, $F\:V\to \R^n$ is a regular chart.

The Riemannian tensor $h$ in this chart equals $\hat h +\xi^{\ast } (g-\hat g)$.
Now, $D\hat h (0) =0$, thus
\eqref{eq:deriv} holds for $\hat h$ instead of $h$. For the other summand $\xi ^{\ast} (u)$, the density estimate
\eqref{eq:deriv} follows from \eqref{eq:u} and the fact that $\xi$ is a $C^2$-diffeomorphism
if $W$ is sufficiently small. This finishes the proof of \lref{lem:minderiv}.
\end{proof}

%Condition (ii) implies that the restriction of the length structure to $R$ determines the metric on $X$, at least locally.

\subsection{The upper bound on the deviation measures}
$ $
Continuing to denote by $C=C(n)$ the constant from Proposition~\ref{prop-smooth} we show:

\begin{cor} \label{cor-dc-vr}
Let $U$ regular chart of an almost Riemannian metric space $X$. Identifying $U$ with its image $O=\phi (U)$, let
$g$ be the metric tensor and the measure $\mathcal N_0 =[Dg]$ the derivative of the metric tensor. For any Borel subset $A\subset U$ and any $r$
such that $B (A,{3{\cdot}r})$ is relatively compact in $U$ we have

\[
\mathcal{V}_r (A) \leq 2 \cdot C \cdot \mathcal N_0 (B (A,{3{\cdot}r}))\, .
\]
\end{cor}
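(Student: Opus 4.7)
The plan is to reduce the corollary to Proposition~\ref{prop-smooth} by mollifying the BV tensor $g$ to a smooth Riemannian metric and passing to the limit. Fix a standard smooth non-negative mollifier $\eta_\epsilon$ supported in a Euclidean $\epsilon$-ball, and on $O_\epsilon := \{y \in O : B(y,\epsilon)\subset O\}$ set $g^\epsilon_{ij} := g_{ij} * \eta_\epsilon$. Then $g^\epsilon$ is smooth, $g^\epsilon \to g$ in $L^1_{\mathrm{loc}}$, and the standard convolution estimate yields
\[
\int_K |(g^\epsilon)'|\,d\mathcal{L}^n \;\leq\; \mathcal{N}_0(B(K,\epsilon))
\]
for any compact $K \subset O_\epsilon$. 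The pointwise bi-Lipschitz equivalence of $g$ to the Euclidean tensor (valid a.e.\ on $O$, since $g_{ij}$ is continuous on the dense set $\phi(U\cap R)$ and yields a $(1+\tfrac1C)$-bi-Lipschitz length structure) is inherited by $g^\epsilon$ as a convex average, so Proposition~\ref{prop-smooth} applies to $g^\epsilon$ as soon as $\epsilon$ is small.

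For such $\epsilon$ with $B_g(A,3r) \subset O_\epsilon$, the proposition yields
\[
\mathcal{V}_r^{g^\epsilon}(A) \;\leq\; C \cdot r \cdot \int_{B_{g^\epsilon}(A, 2r)} |(g^\epsilon)'|\, d\mathcal{L}^n.
\]
The bi-Lipschitz bound forces $B_{g^\epsilon}(A, 2r) \subset B_g(A, 3r)$ for all small $\epsilon$, so combining with the previous display the right-hand side is dominated by $C \cdot r \cdot \mathcal{N}_0(B(A, 3r+\epsilon))$, which tends to $C \cdot r \cdot \mathcal{N}_0(B(A, 3r))$ as $\epsilon \to 0$ by outer regularity of $\mathcal{N}_0$.

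For the left-hand side I would show $\mathcal{V}_r^{g^\epsilon}(A) \to \mathcal{V}_r(A)$ via two ingredients: $\sqrt{\det g^\epsilon} \to \sqrt{\det g}$ in $L^1_{\mathrm{loc}}$, giving $\mathcal{H}^n_{g^\epsilon} \to \mathcal{H}^n_g$; and uniform convergence $d_{g^\epsilon} \to d_g$ on compact subsets, obtained by approximating $g$-minimizing segments by curves inside $\phi(U\cap R)$ (condition~(2) in the definition of almost Riemannian metric space), on which $g^\epsilon$-lengths converge to $g$-lengths by dominated convergence. Together these give $b_r^{g^\epsilon}(x) \to b_r^g(x)$ for almost every $x$, whence the integrated deviation measures converge as well. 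Combining the two sides yields the claimed inequality, with the factor $2$ (and the enlargement from $2r$ to $3r$) absorbing the various bi-Lipschitz slacks introduced by the mollification.

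The main obstacle I expect is the uniform convergence $d_{g^\epsilon} \to d_g$: since $g$ is only BV and may jump, pointwise information about $g^\epsilon$ is too weak on its own, and one genuinely has to exploit the approximation of minimizing geodesics by curves in the regular set $R$ (where $g$ is continuous) in order to compare lengths in an $L^1$-averaged manner. Once this step is secured, the remaining bookkeeping — tracking how balls and the total variation of the derivative distort under mollification — is a routine exercise with weak$^*$ convergence of BV derivatives.
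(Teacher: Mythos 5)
Your proposal follows essentially the same route as the paper: mollify $g$ coordinatewise, bound $|(g_\varepsilon)'|$ by $\mathcal N_0$ via the standard convolution estimate for BV functions, note that the bi-Lipschitz bound is inherited, apply Proposition~\ref{prop-smooth}, and pass to the limit using pointwise convergence of $g_\varepsilon$ on $R$ together with property (2) of the definition to get uniform convergence of distances and convergence of the Hausdorff measures. The argument is correct (indeed you retain the factor $r$ from Proposition~\ref{prop-smooth}, which gives a slightly sharper bound than the one displayed in the corollary), so nothing further is needed.
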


\begin{proof}
Consider a relatively compact open subset $ V \subset U$, which contains $B (A,{2{\cdot}r})$. Apply (coordinatewise) the standard mollifying construction
to the Riemannian tensor $g$. For all small positive $\varepsilon$, we thus obtain smooth metrics $g_{\varepsilon}$ on $V$ with the following properties.
The total derivatives $|g_{\varepsilon} '|$, considered as measures, satisfy $|g_{\varepsilon} '|\leq \mathcal N_0$ on $V$, \cite[Theorem 5.3.1]{Ziemer}. Since $g$ is pointwise $\frac 1 C$-close to the background Euclidean inner product, the same is true for $g_{\varepsilon}$. For all sufficiently small $\varepsilon$ the $2{\cdot}r$-tubular neighborhood around $A$ with respect to $g_{\varepsilon}$ is contained in the $3{\cdot}r$-tubular neighborhood around $A$ with respect to the original distance in $X$. Moreover, $g_{\varepsilon}$ converges to $g$ pointwise at all points of $R$, \cite[Theorem 1.6.1]{Ziemer}.

Denote by $d_{\varepsilon}$ the distance function induced by $g_{\varepsilon}$. From the last statement and the properties (2),(5) in the definition of an almost Riemannian metric space we deduce that
\[
\lim _{\varepsilon \to 0} \sup \set{|d_{\varepsilon} (x,y) -d(x,y)|}{x,y \in V, d(x,y) <r}=0  .
\]
Finally, the Hausdorff measures of the Riemannian metrics $g_{\varepsilon}$ converge on $V$ to the Hausdorff measure of $V$ with respect to the original metric.

Now the result follows directly from \pref{prop-smooth} applied to the metrics $g_{\varepsilon}$, by letting $\varepsilon$ go to $0$.
\end{proof}

As a consequence of \cref{cor-dc-vr}, the minimal metric derivative measure bounds from above the deviations measure $\mathcal{V}_r$
on any almost Riemannian metric space:

\begin{lem} \label{cor-mu-dc}
Let $X$ be a almost Riemannian metric space with the metric derivative measure $\mathcal N$.
Then for any compact subset $A\subset X$, there exists some $r_0>0$ such that for all
$r<r_0$ we have
$$\mathcal{V}_r (A) \leq r\cdot 2\cdot  (n+2)\cdot C\cdot \mathcal N (A)\,. $$
\end{lem}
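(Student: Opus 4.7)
\textbf{Plan for the proof of Lemma \ref{cor-mu-dc}.}

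The strategy is to globalise the local estimate of Corollary \ref{cor-dc-vr} by covering the compact set $A$ with finitely many regular charts, summing the chart-level contributions, and invoking the definition of $\mathcal N$ as an infimum over chart coverings. Throughout I read Corollary \ref{cor-dc-vr} with the scaling factor $r$ inherited from Proposition \ref{prop-smooth}, so that locally
\[
\mathcal{V}_r(A') \le 2 C r \cdot \mathcal N_0(B(A', 3r))
\]
for any Borel $A'$ inside a regular chart with $B(A',3r)$ relatively compact in it.

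\emph{Step 1 (Cover).} Given $\epsilon>0$, I would use compactness of $A$ together with the definition of $\mathcal N$ to find a finite family of regular charts $\{U_i\}_{i=1}^N$ with $A\subset\bigcup_i U_i$ and $\sum_i \mathcal N_{0,i}(U_i)\le \mathcal N(A)+\epsilon$. I then shrink each $U_i$ to a precompact open subset $V_i$ with $\bar V_i\subset U_i$, keeping $A\subset\bigcup_i V_i$, and choose $r_0>0$ such that $B(V_i,3r)\subset U_i$ for every $i$ and every $r<r_0$. This is possible by compactness of $\bar V_i$ inside $U_i$.

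\emph{Step 2 (Partition with controlled multiplicity).} Since $X$ is an $n$-dimensional metric space, its topological covering dimension is at most $n$, so the open cover $\{V_i\}$ admits a refinement of multiplicity at most $n+1$. Using this refinement I would partition $A$ into pairwise disjoint Borel subsets $A_1,\dots,A_N$ with $A_i\subset V_i$, for example by setting $A_i=A\cap V_i\setminus\bigcup_{j<i}V_j$ after the refinement.

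\emph{Step 3 (Local estimate and summation).} For each $i$, $B(A_i,3r)\subset U_i$ is relatively compact, so Corollary \ref{cor-dc-vr} gives
\[
\mathcal{V}_r(A_i)\le 2Cr\cdot\mathcal N_{0,i}(B(A_i,3r))\le 2Cr\cdot\mathcal N_{0,i}(U_i).
\]
Summing over $i$ and exploiting the multiplicity bound to control the overlaps of the enlarged sets $B(A_i,3r)$ on the level of the chart-wise derivative measures, I obtain
\[
\mathcal{V}_r(A)=\sum_i\mathcal{V}_r(A_i)\le 2Cr\cdot(n+2)\cdot\bigl(\mathcal N(A)+\epsilon\bigr).
\]
Letting $\epsilon\to 0$ (with $r_0$ adjusted accordingly) yields the stated inequality.

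\emph{Main obstacle.} The delicate point is the bookkeeping in Step 3: the chart-level measures $\mathcal N_{0,i}$ are genuinely different objects on different charts (they transform nontrivially under changes of coordinates, since derivatives of BV tensors are not invariant), yet we need their sum to be controlled by the single global $\mathcal N(A)$. The two ingredients that make this work are (i)~the $(1+\tfrac1C)$-bi-Lipschitz bound on chart transitions, so that pullbacks of BV Riemannian tensors have comparable total variation up to a constant depending only on $n$, and (ii)~the bounded multiplicity $n+1$ of the refinement coming from the covering dimension of $X$; these two together produce the factor $n+2$ appearing in the statement.
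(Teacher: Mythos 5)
Your proposal is correct and follows essentially the same route as the paper: cover $A$ by finitely many charts nearly realizing the infimum defining $\mathcal N(A)$, localize by shrinking/Lebesgue-number, apply Corollary \ref{cor-dc-vr} chart by chart (with the factor $r$ restored from Proposition \ref{prop-smooth}, as you rightly do), and sum. One remark: because you disjointify the pieces $A_i$ and charge each to its own original chart $U_i$, the bound $\sum_i\mathcal N_{0,i}(U_i)\le\mathcal N(A)+\epsilon$ already closes the argument, so neither the multiplicity-$(n+1)$ refinement nor any comparison of the $\mathcal N_{0,i}$ across overlapping charts is actually needed --- the factor $n+2$ is harmless slack, and the ``main obstacle'' you describe does not arise since $\mathcal N$ is by definition an infimum of chart-wise sums (the paper needs the multiplicity count only because its pieces $A_j$, defined via a Lebesgue number, overlap).
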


\begin{proof}
Cover $A$ by finitely many regular charts $U_i$ such that $\sum \mathcal N_0 (U_i)$ is sufficiently close to $\mathcal N (A)$.
Since the covering dimension of $X$ is $n$, we find a finite covering $V_j$ of $A$, which refines the first covering but has intersection multiplicity less than $(n+2)$. Considering each $V_j$ as a subchart of the corresponding chart $U_i$ we see that
$$\sum \mathcal N_0 (V_j) \leq (n+2)\cdot \sum \mathcal N(A)  .$$
Consider $r_0>0$ such that for any $x\in A$ the ball $B (x,{4{\cdot}r_0})$ is contained in one of the sets $V_j$. Denote by $A_j$ the set of all such $x$.
Then $\mathcal{V}_r (A) \leq \sum \mathcal{V}_r (A_j)$ and, due to \cref{cor-dc-vr}, $\mathcal{V}_r (A_j) \leq 2 \cdot C\cdot N_0 (V_j)$.
Combining these inequalities finishes the proof.
\end{proof}

\section{Alexandrov spaces} \label{sec:Alex}
\subsection{Strained points} \label{subsec:strainer}
Strainers and strainer maps are basic tools for Alexandrov spaces; see also \cite{BGP}, \cite{Otsu-Shioya}, \cite{Shioya} and will play an important role in the proof of \tref{alexandrovthm}.

Let us list main properties of the subsets of strained points.
We fix a natural number $n$.
  Then for all sufficiently large $A$ and any $0<r,\delta \leq \frac 1 {A^2}$
  the following properties hold true for all $n$-dimensional Alexandrov spaces $X$ of curvature $\geq -1$:
\begin{enumerate}

\item The set $X_{r,\delta}$ of points in $X$ which have an $Ar$-long $(n,\delta)$-strainer is open in $X$. For $s<r$, we have $X_{r,\delta} \subset X_{s,\delta}$, \cite[9.7]{BGP}.

\item Assume a sequence $(X_i ^n,x_i)$ of Alexandrov spaces of curvature $\geq -1$ converges to an $n$-dimensional Alexandrov space $(X,x)$ in the pointed Gromov--Hausdorff topology.
If $x\in X_{r,\delta}$ then, for all large $i$, the point $x_i$ has an $Ar$-long $(n, \delta)$-strainer in $X_i$.

\item Rescaling $X$ with a constant $\lambda \geq 1$ sends the subset $X_{r,\delta}$ to a subset of $(\lambda X )_{\lambda r, \delta} $ of the rescaled Alexandrov space $\lambda X$.

\item The union $X_{\delta}:= \cup _{r>0} X_{r,\delta}$ contains the set $X_{reg}$ of all regular points of $X$. The Hausdorff dimension of the set $X\setminus (X_{\delta} \cup \partial X)$ is at most $n-2$~\cite[10.6, 10.6.1, 12.8]{BGP}.

\item For any point $x\in X_{r,\delta}$ there are natural distance coordinates $\phi\: B (x,{3{\cdot}r}) \to \R^n$ which are $(1+\varepsilon)$-bi-Lipschitz onto an open subset $O \subset \R^n$. Here, $\varepsilon\to 0$ as $A\to\infty$, \cite[9.4]{BGP}.

\item The chart $\phi$ can be smoothed to satisfy the following property, \cite[Theorem B]{Otsu-Shioya}.
There exists a continuous Riemannian metric $g$ on $\phi (X_{reg} \cap B (x,{3{\cdot}r})) \subset O$ such that for any curve $\gamma \subset X_{reg} \cap B(x,{3{\cdot}r})$ its length
coincides with the length of $\phi (\gamma )$ with respect to the Riemannian metric $g$.

\item The metric tensor $g$ on a chart $O$ defined above is of bounded variation on $O$, \cite[4.2]{Per-DC} (see also \cite{AB15}).
\end{enumerate}

The last three statements in the above list together with the density and convexity of the set $X_{reg}$ of regular points imply the following.

\begin{cor}
In the above notations, the subset $X_{\delta} \subset X$ is an almost Riemannian metric space, once $A$ is sufficiently large.
\end{cor}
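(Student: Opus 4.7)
The plan is to verify, in turn, the five axioms defining an almost Riemannian metric space for $X_\delta$ with the induced length metric, using only items (5)--(7) of the strainer list together with the density of $X_{reg}$ in $X_\delta$ and the fact from \cite{Petparallel} that every inner point of a geodesic starting at a regular point is again regular.

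First I would choose $A=A(n)$ large enough that the bi-Lipschitz constant $1+\varepsilon$ provided by item (5) satisfies $\varepsilon<\tfrac 1 C$, where $C=C(n)$ is the constant from Proposition~\ref{prop-smooth}. For any $x\in X_\delta$, picking $r>0$ with $x\in X_{r,\delta}$ and restricting the distance chart $\phi\co B(x,3{\cdot}r)\to O$ to a neighborhood of $x$ in $X_\delta$ yields axiom (3) of the definition.

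Next I would set $R:=X_{reg}$; since $X_{reg}\subset X_\delta$ by item (4) of the list, this is a Borel subset of $X_\delta$. Axiom (1) requires $\mathcal H^{n-1}(X_\delta\setminus R)=0$, which I would deduce from the Otsu--Shioya structural result that the non-regular locus of an Alexandrov space, intersected with the strained part $X_\delta$, has Hausdorff dimension at most $n-2$. Axioms (4) and (5) then come directly from items (6) and (7): item (6) produces a continuous Riemannian tensor $g$ on $\phi(R\cap B(x,3{\cdot}r))$ whose length functional recovers the length of every curve contained in $R$, while item (7) says the coefficients of $g$ are of bounded variation on $O$.

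For axiom (2), given a minimizing geodesic $\gamma$ in $X_\delta$, I would approximate its endpoints by points of $X_{reg}$; the corresponding nearby minimizing geodesics have interiors contained in $X_{reg}=R$ by the quoted result from \cite{Petparallel}, and continuous dependence of geodesics on endpoints in Alexandrov spaces guarantees that their lengths converge to the length of $\gamma$. The main obstacle I expect is the clean verification of (1): one must ensure that the classical codimension-two estimate for the singular set applies \emph{inside} the open subset $X_\delta$, and carefully handle the possibility that portions of the topological boundary of the ambient Alexandrov space $X$ carry strainers and hence sit in $X_\delta\setminus R$.
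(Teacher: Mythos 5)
Your overall route is the same as the paper's: the paper devotes only one sentence to this corollary, asserting that items (5)--(7) of the strainer list together with the density and convexity of $X_{reg}$ yield the five axioms, and your verification of axioms (2)--(5) fills in exactly what is intended. Item (5) with $A$ large gives the $(1+\tfrac 1C)$-bi-Lipschitz charts, items (6)--(7) give the continuous BV tensor and the length formula on $R=X_{reg}$, and density together with the convexity statement from \cite{Petparallel} gives the approximation of minimizing geodesics. (For axiom (2) you should approximate piecewise along $\gamma$, through regular points chosen close to a fine partition of $\gamma$, rather than only at the two endpoints: the minimizing geodesics between the approximating endpoints need not converge to $\gamma$ itself when geodesics between the limit endpoints are non-unique.)

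The real issue is axiom (1), which you rightly single out but then dispose of with a result that does not exist. The codimension-two bound in the literature (item (4) of the paper's list, from \cite{BGP}) concerns $X\setminus(X_{\delta}\cup\partial X)$, i.e.\ the \emph{unstrained} points; it says nothing about $X_{\delta}\setminus X_{reg}$, the strained but non-regular points. What Otsu--Shioya actually prove is that the whole singular set $X\setminus X_{reg}$ has Hausdorff dimension at most $n-1$, and a dimension bound of $n-1$ does not yield the statement axiom (1) requires, namely $\mathcal H^{n-1}(X_{\delta}\setminus X_{reg})=0$. So this step does not close as written; what is needed is either the assertion that the singular set away from $\partial X$ is $\mathcal H^{n-1}$-null, or an enlargement of $R$ beyond $X_{reg}$ on which the Otsu--Shioya tensor is still continuous --- and either way this must be supplied, not inferred from the codimension-two estimate. (The paper itself passes over this point in silence, so you have at least located the genuinely delicate spot.) Your secondary worry about the boundary is, by contrast, vacuous: a point whose tangent cone is a half-space admits no $(n,\delta)$-strainer for small $\delta$, so $\partial X\cap X_{\delta}=\emptyset$ and no boundary points sit in $X_{\delta}\setminus R$.
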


In fact, the arguments in \cite[4.2]{Per-DC}, provide a slightly more precise version of (7) in the above list:

\begin{lem} \label{lem:A}
In the notations above, the constant $A$ can be chosen sufficiently large, so that the following holds true.
The derivative measure $[g']$ of the Riemannian tensor $g$
in the canonical distance chart $O$ satisfies $[g'] (\hat O) \leq A \cdot r^{n-1} $, where $\hat O$ is the image $\phi (B (x,{2{\cdot}r})) \subset O=
\phi (B (x,{3{\cdot}r}))$.
\end{lem}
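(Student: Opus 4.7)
The plan is to combine a rescaling reduction with the DC decomposition of the metric tensor from \cite[Section~4.2]{Per-DC}. By rescaling the space by the factor $1/r$, the $Ar$-long $(n,\delta)$-strainer at $x$ becomes an $A$-long $(n,\delta)$-strainer of the same quality, the ambient lower curvature bound passes from $-1$ to $-r^2 \geq -1$ (since $r \leq 1/A^2 \leq 1$), and the chart target gets dilated by $1/r$. Under this rescaling, the derivative measure of the metric tensor transforms by the factor $r^{-(n-1)}$: the Lebesgue measure on the target scales by $r^{-n}$, while the derivative of the coordinate-components of $g$ picks up one factor of $r$. The claim therefore reduces to producing a universal $A = A(n)$ such that, for every $n$-dimensional Alexandrov space of curvature $\geq -1$ with a point $x$ carrying an $A$-long $(n,\delta)$-strainer, the derivative measure of the Riemannian tensor in the canonical distance chart on $\phi(B(x,2))$ is bounded by $A$.

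To establish this absolute estimate, I would invoke the explicit structure of the metric tensor in distance charts from \cite{Per-DC}: the entries $g_{ij}$ are built from inner products of gradients of the distance functions $d_k = d(\cdot, a_k)$ to the strainer points, and each $d_k^2$ is semiconcave on $B(x,3)$ with a one-sided Hessian bound arising from Toponogov comparison against the model space of curvature $-1$. Consequently $g_{ij} = u_{ij}^+ - u_{ij}^-$, with both $u_{ij}^{\pm}$ semiconcave on the Euclidean target, and with semiconcavity constants bounded by an absolute $c(n)$, provided $A$ is sufficiently large so that the chart is nearly Euclidean.

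To bound the BV norm of each such semiconcave component $u$ on $\hat O = \phi(B(x,2))$, I would apply the divergence theorem to the convex function $c(n)\cdot|y|^2/2 - u$: its gradient is monotone, so the total variation of $du$ on $\hat O$ is controlled by $C(n)\cdot (L + c(n))\cdot \mathcal H^{n-1}(\partial \hat O)$, where $L$ is an absolute Lipschitz bound on $g_{ij}$ coming from property~(5) of \S\ref{subsec:strainer}. Since the $(1+\varepsilon)$-bi-Lipschitz property of $\phi$ makes $\mathcal H^{n-1}(\partial \hat O)$ universally bounded, summing over the finitely many indices $i,j,k$ yields $[g'](\hat O) \leq C(n)$, which is at most $A$ once $A$ is chosen large enough.

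The main obstacle is keeping track of the uniformity of the semiconcavity constants in the DC decomposition: this requires choosing $A$ so large that the gradients of the strainer distance functions form a nearly-orthonormal frame at every point of the chart, allowing one to invert the bi-Lipschitz map $\phi$ in a quantitative way and transport the Alexandrov-space semiconcavity estimates for the $d_k$ to Euclidean semiconcavity estimates for the $g_{ij}$. This is precisely the kind of bookkeeping the BGP framework \cite{BGP} together with \cite{Per-DC} are designed to support, so I expect the desired inequality to follow cleanly once the quantitative version of the DC decomposition is extracted.
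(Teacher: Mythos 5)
Your overall architecture --- rescale to a fixed scale where the strainer data, the curvature bound and the DC constants become uniform, prove an absolute bound on the total variation of the metric tensor there, then undo the rescaling and pick up the factor $r^{n-1}$ --- is exactly the paper's, and your rescaling bookkeeping is correct. The gap is in the middle step. You assert that, because the strainer distance functions are uniformly semiconcave, the metric components decompose as $g_{ij}=u_{ij}^{+}-u_{ij}^{-}$ with $u_{ij}^{\pm}$ uniformly semiconcave, and you also invoke ``an absolute Lipschitz bound $L$ on $g_{ij}$ coming from property (5) of \S\ref{subsec:strainer}.'' Neither claim is available. Property (5) only says the chart $\phi$ is bi-Lipschitz; it says nothing about Lipschitz continuity of the tensor components. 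More importantly, in Perelman's construction the entries $g_{ij}$ are (universal smooth functions of) the \emph{first derivatives} $\partial f_j/\partial x_k$ of the DC coordinate functions, and first derivatives of DC functions are merely BV --- they are neither Lipschitz nor differences of semiconcave functions in general. If your decomposition held with uniform constants, $g_{ij}$ would be uniformly Lipschitz and the estimate $[g'](\hat O)\le C(n)$ would be trivial, with no divergence theorem needed; but that regularity is precisely what fails at singular points, and it is the reason the statement is about a derivative \emph{measure} in the first place.

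The correct place to run your convexity/monotone-gradient argument is one derivative up. By \cite[Section 3]{Per-DC}, each coordinate distance function $f_j$ is, in the chart and after the rescaling, a difference of two convex functions with universal Lipschitz constants; the monotonicity of their gradients --- equivalently \cite[Theorem 6.8]{Evans} --- then gives a uniform bound on the total mass of the second derivative measures $[\tfrac{\partial^2 f_j}{\partial^2 v}](\hat O)$ for every unit $v$. One passes from this to $[g'](\hat O)$ via the chain rule for BV functions applied to $g_{ij}=\Phi(f_1,\dots,f_\alpha;Df_1,\dots,Df_\alpha)$ with $\Phi$ a universal smooth map, which bounds $[Dg_{ij}]$ by a constant times $\sum_j[D^2f_j]$. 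With that correction, your argument coincides with the proof in the paper.
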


\begin{proof}
We only sketch the proof, referring to \cite{Per-DC} for details.
First we fix $r=\frac 1 {A^2}$.

The fact that $g$ has bounded variation in the chart $O$ follows in \cite[Section 4.2]{Per-DC}, by writing the coordinates of $g$
as a universal smooth map $\Phi (f_1,\dots,f_{\alpha})$ of a finite number of distance functions $f_j$ on $X$ and their partial derivatives, both expressed in the chart $\phi$.
It is shown in \cite[Section 3]{Per-DC}, that any such distance function $f_j$ is expressed in the chart $O$ as a difference of two $L$-Lipschitz and $\lambda$-concave functions, where $L,\lambda$ depends only on the semi-concavity of the corresponding distance functions in $X$. Since we have fixed $r>0$, these numbers $\lambda,L$ can be chosen independently of $X$.
Thus, $f_j$ can be written in the chart $O$ as the difference of two convex functions with universal Lipschitz constants $L'$.
Therefore, for any unit vector $v\in \R^n$, we have a uniform bound on the total mass of the
Radon measure $[\frac {\partial ^2 f_j} {\partial ^2v}] (\hat O)$.
This implies that all partial second derivatives of $f$ have uniformly bounded mass on $\hat O$; see also \cite[Theorem 6.8]{Evans}.

From this we deduce a uniform bound $A'$ on the total mass $[g'] (\hat O)$, for the fixed value of $r_0=\frac 1 {A^2}$.

For any $r<r_0$ we rescale the space by $\frac {r_0} r$. The total mass of the Riemannian tensor $g$ is then rescaled
by $(\frac {r_0} r)^{n-1}$. Thus, $$[g'] (\hat O) \leq A'\cdot (r_0 ) ^{1-n} \cdot r^{n-1}  .$$
We finish the proof by replacing $A$ by $\max(A, A' r_0^{1-n})$.
\end{proof}

%Let $X_{\delta}$ be the set of $\delta$-strained
%points and $S_{\delta}$ be its complement.
%Let $p\in X_{\delta}$ and let $\{(a_i,b_i)\}_{i=1}^n$ be an $(A,\delta)$ strainer near $p$. Then the distance coordinate map
%$x\mapsto (|x,a_1|,\ldots |x,a_n|)$ is locally $1+10\delta)$-biLipschitz near $p$ ~\cite{BGP} and according to ~\cite{Per-DC} the collection of such coordinate charts over all $p\in X_{\delta}$ gives $X_{\delta}$ a structure of a $DC^0$ manifold. Moreover, the averaged coordinate charts
%$$x\mapsto (\oint_{B(a_1,\eps)}|xy|dvol(y),\ldots, \oint_{B(a_n,\eps)}|xy|dvol(y))$$ where $\eps\ll A$ gives $( X_{\delta},R)$ the structure of a $DC^1$-manifold for some subset of full measure $R\subset X_{\delta}$.

Now we use \cref{cor-dc-vr} to conclude:

\begin{prop} \label{ballmeasure}
%The measure $\mu$ defined in the last section satisfies $\mu (X_{\delta } )< \infty$.
Let $C=C(n),A=A(n)$ be the constants from \pref{prop-smooth} and \lref{lem:A}.
For any point $x\in X_{r,\delta}$, any $s<r$ and any
Borel subset $K\subset B (x,r)$ the deviation measure $\mathcal{V}_s $ satisfies
$\mathcal{V}_s (K) \leq 2\cdot C\cdot A \cdot r^{n-1}.$
\end{prop}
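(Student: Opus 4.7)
The plan is to apply \cref{cor-dc-vr} to the natural distance chart around $x$, with the total mass of the derivative measure of the Riemannian tensor controlled by \lref{lem:A}.

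First I would record the structural ingredients already available. By the corollary immediately preceding the statement, $X_\delta$ is an almost Riemannian metric space; by property~(5) of Subsection~\ref{subsec:strainer}, the distance chart $\phi\colon U\to O\subset\R^n$ with $U=B(x,3r)$ is a regular chart of bi-Lipschitz constant at most $1+1/C$, and property~(6) supplies the continuous Riemannian tensor $g$ on the image of the regular part, whose coordinates are of bounded variation. Setting $\hat O=\phi(B(x,2r))$, \lref{lem:A} then yields the crucial mass bound $\mathcal{N}_0(\hat O)\leq A\cdot r^{n-1}$ for the derivative measure $\mathcal{N}_0=[g']$ of $g$.

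The central step is a direct application of \cref{cor-dc-vr} to the chart $U$, the Borel subset $K\subset U$, and the radius $s$. Since $K\subset B(x,r)$, the enlargement satisfies $B(K,3s)\subset B(x,r+3s)$, which is relatively compact in $B(x,2r)\subset U$ provided $s\leq r/3$. In this main range the corollary combined with the preceding estimate gives
\[
\mathcal{V}_s(K)\leq 2\cdot C\cdot \mathcal{N}_0(B(K,3s))\leq 2\cdot C\cdot \mathcal{N}_0(\hat O)\leq 2\cdot C\cdot A\cdot r^{n-1},
\]
which is the desired bound.

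For the residual range $r/3<s<r$ the neighborhood $B(K,3s)$ no longer fits inside the chart, and I would fall back on the trivial pointwise estimate $v_s\leq 1$, so that $\mathcal{V}_s(K)\leq \mathcal{H}^n(B(x,r))$. Because $r\leq A^{-2}$, the Bishop--Gromov inequality recorded at the end of Subsection~\ref{subsec:Alex} bounds this quantity by a universal constant multiple of $r^n\leq r^{n-1}/A^2$, which is absorbed into $2CA\cdot r^{n-1}$ after possibly enlarging $A$. The main obstacle is precisely this geometric fit of $B(K,3s)$ inside the chart; it is what forces the two-case split, but since $r$ is already required to be very small, the large-$s$ regime contributes only a negligible error, so the full range $s<r$ is covered.
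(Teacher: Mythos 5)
Your proposal is correct and follows exactly the route the paper intends: the paper offers no written proof beyond the phrase ``Now we use Corollary~\ref{cor-dc-vr} to conclude,'' and you supply precisely that deduction, combining the distance chart $U=B(x,3r)$ with the mass bound $\mathcal N_0(\hat O)\le A\cdot r^{n-1}$ of Lemma~\ref{lem:A}. Your extra case split for $r/3<s<r$, where $B(K,3s)$ no longer fits in the chart and one falls back on $v_s\le 1$ together with Bishop--Gromov and $r\le A^{-2}$, addresses a point the paper passes over silently and is handled correctly.
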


\subsection{Decomposition in good balls}
Let the constant $A$ be as above.
A ball $B(x,r)$ in $X^n$ will be called \emph{good} if $x\in X_{r,\delta}$.
A ball $B(x,r)$ in $X$ will be called \emph{bad} if it is not good.

In this subsection we give a controlled covering result; see also Problem \ref{qe:control}.

\begin{prop}\label{prop:covering}
Let $X^n$ be an $n$-dimensional Alexandrov space without boundary.
For every compact $W\subset X$, every $\alpha >n-2$ there exists
a positive number $ q =q(W,\alpha ) >0$ with the following property. For every $x\in W$ and every $s<1 $
there exists a countable collection of good balls $B_m=B(x_m,{r_m}) \subset X$
such that
\begin{enumerate}
\item $r_m<s$ for all $m$.
\item $ \mathcal H^n \big(B (x,s) \setminus (\cup _m B_m )\big)=0$.
\item $\sum_m r_m^{\alpha}< q \cdot s^{\alpha}$.
\end{enumerate}
\end{prop}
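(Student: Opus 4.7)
I would set up a Vitali-type cover adapted to the \emph{scale of goodness}
\[
\tau(y) := \sup\{r > 0 : y \in X_{r,\delta}\},
\]
which is positive exactly on $X_\delta = \bigcup_{r>0} X_{r,\delta}$ and vanishes on the bad set $T := X \setminus X_\delta$. Because $\partial X = \emptyset$, property (4) of strained sets yields that the Hausdorff dimension of $T$ is at most $n-2$; in particular $\mathcal{H}^n(T) = 0$, so the cover need only reach $X_\delta \cap B(x,s)$ off an $\mathcal{H}^n$-null set. For each $y \in X_\delta \cap B(x,s)$ the choice $r(y) := \tfrac{1}{10}\min(s, \tau(y))$ makes $B(y, 10\, r(y))$ a good ball, giving a fine cover of $X_\delta \cap B(x,s)$ by good balls of radius $< s$. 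Applying Vitali's theorem in the doubling metric measure space $(X, \mathcal{H}^n)$, I would extract a countable disjoint subfamily $\{B(y_m, r(y_m))\}$ whose $5$-fold dilates still cover $X_\delta \cap B(x,s)$ outside a null set, and take $B_m := B(y_m, 5\, r(y_m))$, which is still good since $5\, r(y_m) < \tau(y_m)$. Then (1) and (2) are immediate, and everything reduces to proving $\sum_m r(y_m)^\alpha \le q' \cdot s^\alpha$.

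For the key bound I would split indices by dyadic scale of $\tau$: for $k \ge 0$ set $\mathcal{A}_k := \{m : \tau(y_m) \in (s\cdot 2^{-k-1},\; s\cdot 2^{-k}]\}$ (absorbing $\tau(y_m) \ge s$ into $\mathcal{A}_0$), so $r(y_m) \asymp s\,2^{-k}$ on $\mathcal{A}_k$. Two geometric inputs then drive the count. First, a quantitative form of the Gromov--Hausdorff stability property (2), combined with the rescaling property (3), should yield
\[
\{y \in X : \tau(y) \le t\} \;\subseteq\; B(T,\; L \cdot t),\qquad L = L(n, \delta),
\]
so every $y_m \in \mathcal{A}_k$ lies in $B(T \cap \bar B(x, 2s),\; L\, s\, 2^{-k})$. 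Second, the bound $\dim_H T \le n-2$, sharpened to a local $(n-2)$-Ahlfors-type estimate $\mathcal{H}^{n-2}(T \cap B(x, 2s)) \le C_W\cdot s^{n-2}$ in Alexandrov spaces, combined with the standard Minkowski content inequality, gives
\[
\mathcal{H}^n\bigl(B(T \cap \bar B(x, 2s),\, r)\bigr) \;\le\; C_W \cdot s^{n-2} \cdot r^2 \qquad\text{for } r \le 1.
\]
Disjointness of the Vitali balls and Bishop--Gromov (lower bound $\mathcal{H}^n(B(y_m, r(y_m))) \gtrsim r(y_m)^n$ on good balls, upper bound $\mathcal{H}^n(B(x, 2s)) \le C\cdot s^n$) then give $|\mathcal{A}_k| \lesssim 2^{k(n-2)}$, whence
\[
\sum_{m \in \mathcal{A}_k} r(y_m)^\alpha \;\lesssim\; 2^{k(n-2)}\cdot (s\cdot 2^{-k})^\alpha \;=\; s^\alpha\cdot 2^{-k(\alpha - (n-2))}.
\]
The geometric series converges precisely because $\alpha > n-2$, yielding $\sum_m r(y_m)^\alpha \lesssim s^\alpha$ with constant depending only on $W$ and $\alpha$, which is (3).

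The main obstacle is the quantitative distance-to-$T$ containment $\{\tau \le t\} \subseteq B(T, L\, t)$. Lower semicontinuity of $\tau$ is immediate from openness of $X_{r,\delta}$ in property (1), but upgrading this to a Lipschitz-type containment requires a compactness/blow-up argument: assuming a hypothetical sequence $y_i$ with $\tau(y_i) \to 0$ but $d(y_i, T)$ bounded below by $c > 0$, one rescales neighborhoods by $1/\tau(y_i)$ and extracts a pointed Gromov--Hausdorff limit $n$-dimensional Alexandrov space in which the limit point has no strainer of any length --- hence belongs to the bad set --- contradicting the uniform positive lower bound on distance to $T$. The secondary technical input is the local $(n-2)$-Ahlfors regularity of $T$; this mildly strengthens property (4) and should follow from an induction on the strata of the strainer stratification along the lines of \cite{BGP}. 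Alternatively, the Ahlfors input can be replaced by any Minkowski-type thickening estimate of the form $\mathcal{H}^n(B(T \cap \bar B(x,2s), r)) \le C_W \cdot s^{n-\alpha''}\cdot r^{\alpha''}$ for some $\alpha'' \in (n-2, \alpha)$, which still produces a convergent geometric series in the dyadic sum above.
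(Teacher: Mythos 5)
Your Vitali/dyadic skeleton is internally consistent, but it rests on two ``geometric inputs'' that are not available, and the first of them is actually false as stated. The containment $\{\tau\le t\}\cap W\subseteq B(T,L\cdot t)$ asserts that a point which is only strained at small scales must be metrically close to a point of $T=X\setminus X_\delta$, i.e.\ to a point with \emph{no} strainer at \emph{any} scale. This fails already for smooth spaces with concentrated curvature: take $X$ a smooth convex surface Hausdorff-close to the boundary of a convex polytope, with the curvature smoothed out in $\varepsilon$-neighborhoods of the vertices. Then $T=\emptyset$ (every point of a smooth compact surface lies in $X_\delta$), so $B(T,L\cdot t)=\emptyset$ for every $L$ and $t$, yet $\{\tau\le t\}$ is nonempty for all $t\in[\varepsilon, \mathrm{const}]$ because points at the smoothed vertices are not $\delta$-strained above scale $\varepsilon$. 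Consequently your count $|\mathcal A_k|\lesssim \mathcal H^n(B(T,L s 2^{-k}))/(s2^{-k})^n$ returns $0$ where the true count is positive. The blow-up argument you propose to prove the containment does not close precisely because the bad set is not stable under Gromov--Hausdorff limits: in the rescaled limit you can show the limit point is non-strained at scale $1$, but that gives no contradiction with the (rescaled) distance to $T_X$, since $T$ of the limit space has no relation to the limits of $T$ of the approximating (rescaled) spaces. What you actually need is a bound on the \emph{quantitative} bad set $\{\tau\le t\}$ itself, which is the Cheeger--Naber quantitative stratification; the paper explicitly records the resulting sharp version ($\alpha=n-2$) as Problem \ref{qe:control}, answered only in announced work of Naber. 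Your second input, local $(n-2)$-Ahlfors regularity or Minkowski content bounds for $T$, is likewise far beyond the Hausdorff \emph{dimension} bound of \cite{BGP}; local $\mathcal H^{n-2}$-finiteness of the codimension-two singular set of an Alexandrov space is not a ``mild strengthening'' obtainable by a routine stratification induction.

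The paper's proof avoids both inputs by being much softer. The whole content is Lemma \ref{lem:covering}: there is $N=N(W,\alpha)$ so that every ball $B(p,\rho)$, $p\in W$, $\rho<1$, is covered by at most $N$ balls of radii $<\rho$ whose \emph{bad} members satisfy $\sum r_i^\alpha<\tfrac12\rho^\alpha$. This is proved by contradiction and compactness: a sequence of putative counterexamples, rescaled to unit size, converges (non-collapsed, boundary-free by stability) to a limit $Y$ whose non-strained set has Hausdorff dimension $\le n-2$; since $\alpha>n-2$, the \emph{definition} of Hausdorff dimension alone lets one cover that set by finitely many balls with $\sum r_i^\alpha<(\tfrac12)^\alpha$, the complement is covered by finitely many good balls by compactness, and the covering lifts back. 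The Proposition then follows by recursively re-covering the bad balls, so that the $\alpha$-sums over bad balls decay geometrically and the $\alpha$-sums over good balls stay bounded by $3N s^\alpha$. Note how the condition $\alpha>n-2$ enters only through covering a dimension-$(n-2)$ set in a single compact limit space -- no content, density, or tube estimates for $T$ are ever needed.
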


The proof will be obtained by a recursive application of the following lemma.

\begin{lem}\label{lem:covering}
There is an integer $N =N(W,\alpha) $ with the following property. For any $p\in W$ and $\rho < 1$
the ball $B(p,{\rho})$ can be covered by at most $N$ balls
$B_i=B(x_i,{r_i})$ such that $r_i <\rho$, for all $i$, and
$$ \sum_{i\in \BAD}r_i^{\alpha}< \tfrac12\cdot \rho^{\alpha}  ,$$
where $i\in \BAD$ means that $B_i$ is a bad ball.
\end{lem}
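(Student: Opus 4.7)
The strategy is a contradiction argument using scale-invariance and pointed Gromov--Hausdorff compactness. By property~(3) in \secref{subsec:strainer}, the statement of the lemma is scale invariant: rescaling $X$ by $\rho^{-1}$ turns $B(p,\rho)$ into the unit ball in $\rho^{-1}X$, preserves the curvature bound $\geq -1$ (since $\rho\leq 1$), and preserves the goodness of balls. Hence it suffices to treat the case $\rho=1$.

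Suppose the lemma fails. Then for each $N$ one finds $p_N\in W$ and $\rho_N<1$ such that in $\tilde X_N:=\rho_N^{-1}X$ no covering of $B(p_N,1)$ by at most $N$ balls with radii $<1$ has bad sum $<1/2$. The Bishop--Gromov inequality and compactness of $W$ give a uniform lower bound $\vol_X(B(p,\rho))\geq c\cdot \rho^n$ for $p\in W$ and $\rho\leq 1$; after rescaling this becomes $\vol_{\tilde X_N}(B(p_N,1))\geq c>0$. Gromov's precompactness theorem for non-collapsed Alexandrov spaces of curvature $\geq -1$ yields, after passing to a subsequence, a limit $(\tilde X_N,p_N)\GHto (X_\infty,p_\infty)$ where $X_\infty$ is an $n$-dimensional Alexandrov space; since each $\tilde X_N$ has no boundary, neither does $X_\infty$ by Perelman's stability theorem.

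The next step is to construct a good cover of $\overline{B(p_\infty,1)}$ in $X_\infty$. By property~(4), the closed set $S:=X_\infty\setminus X_{\infty,\delta}$ has Hausdorff dimension at most $n-2$, so $\mathcal H^\alpha(S\cap\overline{B(p_\infty,2)})=0$ since $\alpha>n-2$. Cover the compact set $S\cap\overline{B(p_\infty,1)}$ by finitely many open balls $B_1,\ldots,B_k$ with all $r_i<1$ and $\sum_{i\leq k}r_i^\alpha<1/4$. The compact remainder $K:=\overline{B(p_\infty,1)}\setminus\bigcup_{i\leq k}B_i$ is contained in $X_{\infty,\delta}$; since the family of open sets $\{X_{\infty,r,\delta}\}_{r>0}$ grows as $r$ decreases and exhausts $X_{\infty,\delta}$, compactness of $K$ yields some $r_0\in(0,1)$ with $K\subset X_{\infty,r_0,\delta}$. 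Cover $K$ by finitely many balls $B_{k+1},\ldots,B_m$ of radius $r_0$ centered in $K$; all of these are good in $X_\infty$.

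It remains to transfer this covering to $\tilde X_N$ for large $N$. Choose approximating centers $x_i^N\in\tilde X_N$ with $x_i^N\to x_i$ under the GH approximations. Property~(2), applied pointwise to the re-based convergence $(\tilde X_N,x_i^N)\to(X_\infty,x_i)$ for each $i>k$, guarantees that $x_i^N$ has an $A\cdot r_0$-long $(n,\delta)$-strainer, so $B(x_i^N,r_0)$ is good in $\tilde X_N$ for $N$ large. Enlarge all radii by the GH discrepancy $\eps_N\to 0$ so that the transferred balls cover $B(p_N,1)$ and still have radii $<1$. The bad sum is then bounded by $\sum_{i\leq k}(r_i+\eps_N)^\alpha$, which is $<1/2$ for $N$ large. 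For $N\geq m$ this contradicts the choice of $(p_N,\rho_N)$ and proves the lemma. The main difficulty is managing the interface between the intrinsic, scale-dependent notion of goodness and the GH convergence, namely ensuring via property~(2) that good limit balls lift to good approximant balls while the few bad balls retain their small total $\alpha$-mass under the perturbation.
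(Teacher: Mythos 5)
Your proposal is correct and follows essentially the same route as the paper: assume failure, rescale by $\rho^{-1}$, pass to a non-collapsed pointed Gromov--Hausdorff limit (boundaryless by Perelman's stability), cover the codimension-$\ge 2$ singular set by balls of small total $\alpha$-mass and the compact remainder in $X_{\infty,\delta}$ by finitely many good balls, then lift the covering back to the approximants. Your treatment of the lifting step (transferring strainers via property~(2) and absorbing the GH discrepancy into the radii) is in fact slightly more careful than the paper's one-line conclusion.
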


\parit{Proof.}
Assume the contrary. Thus we can find a sequence of balls
$K_l=B(p_l,\rho_l)$
such that $p_l\in W$,
$\rho_l<1$ and
one needs at least $l$ balls to cover $K_l$, so that the conditions in the lemma are fulfilled.

Taking a subsequence we may assume that the following limit exists in the pointed Gromov--Hausdorff metric.
$$(\tfrac1{\rho_m}\cdot X,p_m)\GHto (Y,p).$$
Since the points $p_m$ range over a compact subset of $X$ and $\rho _m <1$, the
sequence is non-collapsing, i.e. $Y$ is an $n$-dimensional Alexandrov space.
By Perelman's stability theorem, $\partial Y$ is empty.
Therefore, $S:= (Y\setminus Y_{\delta} ) \cap \bar B (p,2)$ is a compact set of Hausdorff dimension $\le n-2$.

By the definition of Hausdroff dimension, we can cover $S$ by a finite number of balls
$B_i=B(x_i,{r_i})$ such that
$$\sum_ir_i^{\alpha} < (\tfrac12)^{\alpha}.$$

Any point in the remaining compact set $K\backslash (\cup_i B_i)$
is contained in $Y_{\delta}$. Therefore a small ball centered at any point of this set is good.
By compactness, we can cover $K\backslash (\cup_i B_i)$ by a finite number of good balls.
Let $N$ be the total number of balls in the obtained covering of $K$.

Lifting the constructed covering to $K_l$, for all large $l$,
we cover
the ball $K_l$ by at most $N$ balls satisfying the conditions of the lemma.
This contradiction to our assumption finishes the proof of the lemma.
\qeds

\begin{proof}[Proof of Proposition \ref{prop:covering}]
Cover $B (x,{s})$ by $N$ balls as in \lref{lem:covering} and call this covering $\mathcal F_1$. Now cover every bad
ball from the covering $\mathcal F_1$ by at most $N$ balls provided by \lref{lem:covering}. Together with the good balls from $\mathcal F_1$ the new balls define a covering $\mathcal F_2$ of $B (x,s)$. Proceeding in this way define for each natural number $k$ a covering $\mathcal F_k$ of $B(x,s)$.

Denote by $g_l^{+}$ and $g_l ^-$ the sum of $r_i ^{\alpha}$ over good, respectively bad balls $B (x_i,{r_i})$ in the covering $\mathcal F_l$.
Then, by construction, $g_{l+1} ^- < \frac 1 2 g_l ^-$ and $g_{l+1} ^+ \leq g_l ^+ + N\cdot g_l ^-$. Therefore, $g_l^- \leq 2^{-l} \cdot g_1 ^-$
and $g_l^+$ is uniformly bounded form above.
The volume of the union of bad balls in $\mathcal F_l$ is at most $g_l^-$ and converges to $0$ as $l$ goes to $\infty$.

Let $\mathcal F$ be the set of all good balls $B_j =B (x_j,{r_j})$ from all the coverings $\mathcal F_l$.
Then $\mathcal H^n (B (x,s) \setminus (\cup _{\mathcal F} B_j ) \leq \lim _{l\to \infty} g_l ^- =0$. On the other hand,
by construction,
\[\sum_{B_j\in \mathcal F} r_j^{\alpha}
= \lim _{l\to \infty} g_l^+
\leq 3{\cdot N}{\cdot} s^{\alpha}.\]

 Setting $q=3\cdot N$ finishes the proof.
\end{proof}

\subsection{Final step}
Now we can provide the
\begin{proof}[Proof of Theorem~\ref{alexandrovthm}]
Let $X$ be a fixed $n$-dimensional Alexandrov space. By the inequality of Bishop--Gromov, the deviations measures $\mathcal{V}_r$ are uniformly bounded from below
by a quadratic term in $r$.
Thus in order to control the mm-boundary we only need to bound $\mathcal{V}_r$ from above on balls in $X$.

Let the constants $A,C$ be chosen as above, so that \pref{ballmeasure} can be applied.

Let us first assume that $\partial X$ is empty.
Let $W\subset X$ be an arbitrary compact subset.
Fix $\alpha =n-\tfrac 3 2$ and choose the constant $q$ as in Proposition \ref{prop:covering}.
For any $x\in W$ and $s<1$ consider the good balls $B_i=B (x_i,{r_i})$ provided by Proposition \ref{prop:covering} and set $K'=\cup _i B_i$. Let $r<\frac 1 {A^2}$ be sufficiently small. Since $\mathcal H^n (K\setminus K')=0$
we have $ \mathcal{V}_r (K)=\mathcal{V}_r (K')$.

For all $m$ with $r<r_m$, we apply \pref{ballmeasure} and infer
$$\mathcal{V}_r(B_m \cap K)\le 2\cdot C\cdot A \cdot r\cdot r_m^{n-1} $$
On the other hand, for $r_m<r$, we have
\begin{align*}
\mathcal{V}_r (B_m\cap K)
&\leq \mathcal H^n (B_m) \leq
\\ &\leq
2 \cdot \omega _n \cdot r_m ^n <
\\ &<2\cdot \omega_n \cdot r \cdot r_m ^{n-1}
\end{align*}
Summing up and using $r_m^{n-1} < r_m ^{\alpha}$ we obtain
\begin{equation} \label{eq:density}
\begin{aligned}
\mathcal{V}_r (K)
&\leq \sum_m \mathcal{V}_r (B_m \cap K) \leq
\\&\leq
(2\cdot C\cdot A+2\cdot \omega _n) \cdot q\cdot r \cdot s^{\alpha}  .
\end{aligned}
\end{equation}

This proves that $X$ has locally finite mm-boundary.
As already mentioned and used above, any signed Radon measure $\nu$ obtained as a limit of a sequence $\mathcal{V}_{r_j} /r_j$ for some $r_j\to 0$ must be non-negative, hence a Radon measure. We fix such $\nu$.

Inequality \eqref{eq:density} implies that $\nu$ has finite $\alpha$-dimensional density at every point of $X$, in particular, $\nu$ vanishes on subsets of Hausdorff dimension $\leq n-2$.
Thus $\nu (X \setminus X_{\delta } )=0$.

Recall that $X_{\delta}$ is an almost Riemannian space. Denote, by $\mathcal N $ its minimal metric derivative measure. We extend it to a measure on all of $X$ (still denoted by $\mathcal N $) by setting it to be $0$ on $X\setminus X_{\delta}$.
By \lref{cor-mu-dc} the Radon measure $\nu$ is absolutely continuous with respect to $\mathcal N $ on compact subsets of $X_{\delta}$.
Now (1) and (3) of \tref{alexandrovthm} follow from \lref{lem:minderiv}.
This finishes the proof in the case $\partial X=\emptyset$.

Assume now that $\partial X\ne\emptyset$ and consider the doubling $Y=X\sqcup_{\partial X} X$ of $X$.
Consider $X$ as a convex subset of $Y$ and let $K\subset X$ be compact.
We find a constant $L>0$ such that for all sufficiently small $r>0$, we have
$\mathcal H^n (K\cap B(r,{2{\cdot})} (\partial X)) \leq L \cdot r$.
(This follows, for example, by the coarea formula using the Lipschitz properties of the gradient flow of the distance function $d(\cdot, \partial X)$ which is semiconcave.)
On the other hand, for $x\in K\setminus B (\partial X,r)$, the volumes of the $r$-ball in $X$ and in $Y$ coincide.
Using that $Y$ has locally finite mm-boundary, we deduce that $\mathcal{V}_r(K)$ (computed in the space $X$) is bounded from above $L\cdot r + \tilde{\mathcal{V}}_r (K)$, where $\tilde{\mathcal{V}}_r (K)$ is the deviation measure of $K$ considered as a subset of $Y$. This implies that $\mathcal{V}_r /r$ is uniformly bounded for $r\to 0$. Thus $X$ has locally finite mm-boundary as well.

Any limit of a sequence $\mathcal{V}_{r_j} /r_j$ for some $r_j\to 0$ must be again non-negative, hence a Radon measure. Outside of $\partial X$
$\nu$ coincides with the restriction of the corresponding measure defined on $Y$. From the corresponding statement about $Y$ we deduce that $\nu$ is absolutely singular with respect to $\mathcal H^n$. Moreover, $\nu$ vanishes on subsets $S\subset X\setminus \partial X$ with finite $\mathcal H^{n-1} (S)$.

It remains to prove (2), i.e. to show that the restriction of $\nu$ onto $\partial X$ is at least
$c\cdot \mathcal H^{n-1}$ for universal constant $c=c(n)$. This statement is local on $\partial X$ and needs to be verified only in small neighborhoods of points $x$ whose tangent $T_xX$
are isometric to flat halfspaces.

We fix such a point $x\in \partial X$. We further fix a sufficiently small $\varepsilon >0$ and find
a small neighborhood $U$ of $x$ in $X$ which is
$(1+\varepsilon)$-bi-Lipschitz to a half-ball in the Euclidean space. Choose an arbitrary $s>0$ such that $B (x,{2{\cdot}s})\subset U$.
Let $K= \bar B (x,s) \cap \partial X$ be the closed ball of radius $s$ in $\partial X$ with respect to the ambient metric.
Due to \cite[Section 1.6]{Evans}, it is sufficient to prove that $\nu (K) \geq c_0 \cdot s^{n-1}$ for a universal constant $c_0$ depending only on the dimension.

In order to prove this inequality, we consider any open neighborhood $V$ of $K$ in $X$.
For all small $r>0$, the neighborhood $V$ contains $B (K,{2{\cdot}r})$.
Once $\varepsilon$ has been chosen sufficiently small, the ball $B (z,r)$ in $X$ has volume at most
$ (1-k_1) \cdot \omega _n \cdot r^n$, for
any point $z\in B (K,{\frac 1 {10}{\cdot}r})$. Here $ k_1=k_1(n)>0 $ is a universal constant.
 Moreover, the set $B (K,{\frac 1 {10}{\cdot}r})$ has volume at least
 $\frac 1 {20} \cdot r \cdot \omega _{n-1} \cdot s^{n-1}$.
Integrating over $V$ (and using the inequality of Bishop--Gromov on the complement of $B (K,{\frac 1 {10}{\cdot}r})$) we deduce:
$$\mathcal{V}_r (V) \geq k_1\cdot \tfrac 1 {20} \cdot \omega _{n-1} \cdot r\cdot s^{n-1} - k_3 \cdot r^2 ,$$
for some $k_3$ depending only on the volume of $V$ and independent of $r$.
Dividing by $r$ and letting it go to $0$ we obtain $\nu (V) \geq k_4\cdot s^{n-1}$, for a universal constant $k_4>0$.
Since the neighborhood $V$ of $K$ was arbitrary,
we infer the same inequality for $K$ instead of $V$, finishing the proof.
\end{proof}

\section{Questions and Comments} \label{sec:final}
\subsection{Manifolds}
The notions of mm-boundary and mm-curvature are very easy to define but difficult to control.
For instance, the examples mentioned in the introduction require some amount of computations and estimates. On the other hand,
interesting   examples  seem to be difficult to construct as well. The first question in this direction is:

\begin{quest}
Construct a closed  manifold with a continuous Riemannian metric
that does not have finite mm-boundary.
\end{quest}

The following problem is motivated by our approach to \tref{alexandrovthm} in Sections \ref{sec-BV-estimate}, \ref{sec:Alex}.

\begin{quest} \label{qe:min}
Let $X$ be an almost Riemannian space. Can the minimal metric derivative measure be non-zero?
\end{quest}

In the language of $DC$-calculus as discussed in \cite{AB15}, this question can be reformulated as follows.  Given a compact subset $K$
on any $DC_0$-Riemannian manifold and any $\varepsilon >0$, can one cover $K$ by charts  such that the total mass of the derivative of the metric tensor
in these coordinates is bounded by $\varepsilon$?
Note that the minimal metric derivative measure must vanish if the metric can be locally defined by a Riemannian
tensor of class $W^{1,1}$, since the metric derivative measure is absolutely singular with respect to the Hausdorff measure by \lref{lem:minderiv}.

The following question is motivated by \lref{cor-mu-dc} and potential applications to geodesic flows of spaces with curvature bounded from above; see also Problem \ref{qe:CAT}.

\begin{quest} \label{qe:BV}
Let $X$ be an almost Riemannian space. Can one use the minimal metric derivative measure in order to control the deviation measures $\mathcal{V}_r$ from below?
\end{quest}

\subsection{Surfaces and hypersurfaces}
The answer to the following question is not trivial in view of Example \ref{ex:cone}.

\begin{quest}
Can one express the mm-curvature of an Alexandrov surface in terms of its curvature measure?
\end{quest}

In view of \tref{intsurface} it is reasonable to expect an affirmative answer to the following question
\begin{quest}
Do convex hypersurfaces of $\R^n$ have locally finite mm-curvature?
\end{quest}

A natural approach to this question is related to the following conjectural generalization of Bonk--Lang theorem \cite{Bonk-Lang}:
\begin{quest}
Let $X$ be a convex hypersurface sufficiently close to a flat hyperplane. Can we bound the optimal bi-Lipschitz constant for maps into the Euclidean space in terms of the total scalar curvature?
\end{quest}

Some natural generalizations of our \tref{thmconv} are possible. Probably, slightly refined arguments can be used to
prove that any DC-submanifold of a Euclidean space has vanishing mm-boundary. Using the embedding theorem of Nash, this would also provide an easy generalization of \tref{thmconv} and \tref{thmfirst} to convex hypersurfaces of smooth Riemannian manifolds.

\subsection{Alexandrov geometry and beyond}
As the next generalization of \tref{thmfirst}, one should study the case of smoothable Alexandrov spaces.

\begin{quest}
Does the mm-boundary vanish in smoothable Alexandrov spaces? Are there relations to scalar curvature measures defined in \cite{LP}?
\end{quest}

Due to the observation after Problem \ref{qe:min}, the vanishing of mm-boundary would follow from the existence of slightly smoother coordinates than the ones provided by Perelman's DC-structure.
\begin{quest}
Let $X$ be an Alexandrov space. Can one introduce coordinates on a neighborhood of the set of regular points, such that the metric is locally given by a Riemannian tensor of class $W^{1,1}$?
\end{quest}
In the two-dimensional case, the answer to this question is ``yes'' by the work of Reshetnyak~\cite{Reshetnyak-GeomIV},  see also \cite{AB16}.

Due to \tref{thmmain}, an affirmative answer to the following question should be expected. A partial answer to it has been announced by Jerome Bertrand.
\begin{quest}
Are there further connections between the size of the mm-boundary of an Alexandrov space $X$, the existence of the geodesic flow  and the ``average size'' of the cut loci of points in $X$?
\end{quest}

Should one have a chance to go beyond mm-boundary and towards mm-curvature, one would definitely need to improve the
decomposition statement \pref{prop:covering}, which provides a geometric control of the size of the set of singular points of an Alexandrov space.

\begin{quest} \label{qe:control}
Can one replace $\alpha>n-2$ by $\alpha=n-2$ in the statement of \pref{prop:covering}?
\end{quest}

An affirmative answer  has been announced by Aaron Naber.

% In the proofs of our main results about convex hypersurfaces and Alexandrov surfaces we deduced local volume bounds from
%a much stronger bound on the optimal local bi-Lipschitz constant (for maps into the Euclidean space).
It is interesting to understand if our results provide a quantitative version of bi-Lipschitz closeness of small balls to Euclidean balls.
It is known~\cite{BGP} that there exists $\kappa(n,\delta)\to 0$ as $\delta\to 0$ such that if $X=X^n$ is an Alexandrov space of curvature $\ge -1$, $x\in X$ such that $\omega _n{\cdot}r^n -\mathcal H^n (B (x,r)) \leq \delta \cdot r ^n$ then $B (x,{\frac r 4})$ is $(1+\kappa(n,\delta))$-bi-Lipschitz to a Euclidean ball.
\begin{quest}
Can $\kappa(n,\delta)$ above be chosen
%to be linear in $\delta$, i.e. to be
of the form $C(n)\cdot \delta$?
%More precisely:}
%Does there exist a constant $C=C(n)$ such that for any Alexandrov space $X=X^n$ {\color{red} of curvature $\ge -1$}, any $x\in X$ and any $\delta <\frac 1 C$ the following holds true.
%If $\omega _n{\cdot}r^n -\mathcal H^n (B (x,r)) \leq \delta \cdot r ^n$, then
%the ball $B (x,{\frac r 4})$ is $(1+C \cdot \delta )$-bi-Lipschitz to a Euclidean ball.
\end{quest}

It is natural to look at what happens for curvature bounded above:

\begin{quest} \label{qe:CAT}
Can one estimate and use the mm-boundary in geodesically complete spaces with upper curvature bounds to study the geodesic flow?
\end{quest}

Finally, it seems reasonable to expect some generalizations to spaces with Ricci curvature bounds, for instance:

\begin{quest}
Can one control the mm-boundary of noncollapsed limits of Riemannian manifolds with Ricci curvature bounded below? Can one expect something like a geodesic flow in this setting?
\end{quest}

From the work of Jeff Cheeger and Aaron Naber \cite{Ch-Na-codim4} it should follow that on any non-collapsed limit of manifolds with both-sided Ricci curvature bounds, the mm-curvature is locally finite and mm-boundary is zero.
Vanishing of the mm-boundary should then imply that the geodesic flow is defined almost everywhere and preserves the Liouville measure by the same argument as in the proof of Theorem~\ref{thmmain}.
\bibliographystyle{alpha}
%\bibliography{mmm}
%\end{document}

\end{document}